\newtheorem{remark}{Remark}
\newtheorem{prop}{Proposition}
\newtheorem{thm}{Theorem}
\newtheorem{lemma}{Lemma}
\newtheorem{example}{Example}
\DeclareMathOperator*{\Tr}{Tr}
\begin{document}
\title{Some new results on relative entropy production, time reversal, and optimal control of time-inhomogeneous diffusion processes}

\date{\today}

\author{Wei Zhang}%
 \email{wei.zhang@fu-berlin.de}
\affiliation{ 
Zuse Institute Berlin, Takustrasse 7, 14195 Berlin, Germany.
%\\This line break forced with \textbackslash\textbackslash
}%

\begin{abstract}
  This paper studies time-inhomogeneous diffusion processes, including both
  Brownian dynamics and Langevin dynamics.  We derive upper bounds of the
  relative entropy production for a time-inhomogeneous process with respect to
  the transient invariant probability measures.  We also study the time
  reversal of the reverse process in Crooks' fluctuation theorem.  We show
  that the time reversal of the reverse process coincides with the optimally
  controlled forward process that leads to zero variance importance sampling
  estimator for free energy calculations based on Jarzynski's equality.
\end{abstract}

\keywords{
time-inhomogeneous process, fluctuation theorem, optimal control, relative entropy estimate, time reversal }

\maketitle

%AMS  60J60, 53C17
\section{Introduction}
\label{sec-intro}
In recent years, there has been growing research interest in understanding 
time-inhomogeneous systems in various research fields, such as
nonequilibrium physics,~\cite{crooks-path-ensemble-pre2000,fluct-thm-2012,generalized-fluct-thm-feedback}
molecular dynamics,~\cite{dellago-hummer-2014}
complex networks,~\cite{evolving-graphs-higham-2010,HOLME201297} and
biology.~\cite{bioattractors-dynamical-systems-theory-2014}
In nonequilibrium physics, in particular, one theoretical focus in the study
of time-inhomogeneous systems has been the 
nonequilibrium work relations, which concern dynamical processes
that are out of equilibrium under nonequilibrium driving forces. Notably, 
Jarzynski's equality relates the free energy differences between two
equilibrium states to the nonequilibrium
work needed to drive the system from one state to another within finite
time,~\cite{jarzynski1997,Jarzynskia2008} while fluctuation relations reveal
the connections between the forward process and the corresponding reverse
nonequilibrium process.~\cite{crooks1999,crooks-path-ensemble-pre2000,Chetrite2008,generalized-fluct-thm-feedback,fluct-thm-2012,non-equilibrium-2018}

In this paper we study time-inhomogeneous nonequilibrium diffusion processes,
including both Brownian dynamics (or overdamped Langevin dynamics) and
Langevin dynamics, which are widely used in modeling the dynamics of particle
systems in different application areas.~\cite{van2011stochastic} Our goal is to provide mathematical analysis on several topics of time-inhomogeneous processes,
namely the relative entropy, time reversal, as well as a type of optimal control problem that is
related to Jarzynski's equality.~\cite{non-equilibrium-2018}
Since the results for Brownian dynamics and Langevin dynamics are similar, 
in the following we summarize our results for Brownian dynamics and then discuss the differences
in the case of Langevin dynamics.  

Given $C^\infty$-smooth functions $J:\mathbb{R}^n\times [0,T]\rightarrow \mathbb{R}^n$,
$V:\mathbb{R}^n\times [0,T]\rightarrow \mathbb{R}$ and~$\sigma: \mathbb{R}^n \times [0, T] \rightarrow
\mathbb{R}^{n \times m}$, where $T>0$ and $m \ge n$, we consider the Brownian dynamics (forward process) in $\mathbb{R}^n$ which satisfies the stochastic differential equation (SDE)
\begin{align}
  \begin{split}
  d x(s)  =& b(x(s),s)\,ds + \sqrt{2\beta^{-1}}
  \sigma(x(s),s)\,dw(s)\,,  \\
    =& \Big(J - \gamma \nabla V + \frac{1}{\beta} \nabla \cdot \gamma\Big)(x(s), s)\,ds + \sqrt{2\beta^{-1}} \sigma(x(s),s)\,dw(s)\,, \quad s \in [0,T]\,,
  \end{split}
\label{forward-dynamics-intro}
\end{align}
where $\beta > 0$, $\gamma=\sigma\sigma^\top$, $b=J - \gamma \nabla V + \frac{1}{\beta} \nabla \cdot \gamma$, $(w(s))_{s\in [0,T]}$ is an $m$-dimensional Brownian motion,
and $\nabla\cdot \gamma: \mathbb{R}^n\times [0,T]\rightarrow
\mathbb{R}^n$ denotes the vector whose components are $(\nabla\cdot \gamma)_i = \sum_{j=1}^n \frac{\partial \gamma_{ij}}{\partial
x_j}$ for $1 \le i\le n$.
Note that both the drift $b$ and the diffusion coefficient $\sigma$ 
are time-dependent. The matrix-valued function $\gamma: \mathbb{R}^n\times
[0,T]\rightarrow \mathbb{R}^{n\times n}$ is assumed to be uniformly positive definite.  
For each $s \in [0, T]$, we denote by $\nu_s$ the probability distribution of
$x(s)\in \mathbb{R}^n$, and we assume that the vector field $J$ satisfies 
\begin{align}
  \mbox{div} \big(J(x, s) \mathrm{e}^{-\beta V(x, s)}\big) =0\,, \quad \mbox{a.e.}\hspace{0.2cm} x \in \mathbb{R}^n\,.
  \label{div-j-zero}
\end{align}
Under the assumption \eqref{div-j-zero} and certain technical conditions on
$V$ and $\sigma$, the process (for fixed $s$)
\begin{align}
  d \widetilde{x}^s(t)  = b(\widetilde{x}^s(t),s)\,dt + \sqrt{2\beta^{-1}}
  \sigma(\widetilde{x}^s(t),s)\,dw(t)\,, \quad t \ge 0
\label{forward-dynamics-intro-fixed-s}
\end{align}
is ergodic with respect to the invariant measure $\nu^\infty_s$ , given by 
\begin{align}
  d\nu^\infty_s(x) = \frac{1}{Z(s)} \mathrm{e}^{-\beta V(x,s)}\,dx\,, \quad
  \mbox{where} ~~ Z(s) = \int_{\mathbb{R}^n} \mathrm{e}^{-\beta V(x,s)}\,dx \,.
  \label{invariant-mu}
\end{align}
We call $\nu^\infty_s$ the transient invariant measure of \eqref{forward-dynamics-intro} at time $s$.
Note that the inclusion of the term $\frac{1}{\beta}\nabla \cdot \gamma$
in \eqref{forward-dynamics-intro} and the assumption \eqref{div-j-zero} on $J$ guarantee that the invariant measure 
of \eqref{forward-dynamics-intro-fixed-s} is the Gibbs measure \eqref{invariant-mu}. 
In fact, the vector field $J$ corresponds to the anti-symmetric part of the
generator of \eqref{forward-dynamics-intro-fixed-s} (see \eqref{l-forward} in Section~\ref{subsec-notations-brownian}).

Our first result for Brownian dynamics \eqref{forward-dynamics-intro}
(Theorem~\ref{thm-entropy-estimate-brownian}) concerns upper bound estimates of the relative entropy of $\nu_s$ with respect to $\nu^\infty_s$, defined as 
\begin{align}
  \mathcal{R}^{\mathrm{Bro}}(s) := D_{KL}(\nu_s\,\|\,\nu^\infty_s) = \int_{\mathbb{R}^n} \ln
  \frac{d\nu_s}{d\nu^\infty_s}\, d\nu_s\,, \label{entropy-sr-intro}
\end{align}
where $D_{KL}$ denotes the Kullback-Leibler divergence between two probability measures.
The derivation of the upper bounds of $\mathcal{R}^{\mathrm{Bro}}(s)$ relies on the formula of
the relative entropy production rate (Proposition~\ref{prop-production-rate-of-entropy}):
\begin{align}
    \frac{d \mathcal{R}^{\mathrm{Bro}}(s)}{ds}  =& -\beta \int_{\mathbb{R}^n} \frac{\partial V}{\partial s} d\nu^{\infty}_s + \beta \int_{\mathbb{R}^n} \frac{\partial
    V}{\partial s}\, d\nu_s - 
  \frac{1}{\beta} \int_{\mathbb{R}^n}  \Big|\sigma^\top\nabla
  \left(\ln\frac{d\nu_s}{d\nu^{\infty}_s}\right)\Big|^2 \, d\nu_s\,, 
    \label{formula-dr-intro}
\end{align}
as well as the logarithmic Sobolev inequality for $\nu^\infty_s$.
We point out that \eqref{formula-dr-intro} generalizes the results in
literature for time-homogeneous
processes~\cite{origin-of-entropy-gehao,hongqian-decomp} with respect to a fixed invariant measure to time-inhomogeneous processes
\eqref{forward-dynamics-intro} with respect to the transient invariant measures $\nu^\infty_s$.

Our second result for Brownian dynamics \eqref{forward-dynamics-intro} (Theorem~\ref{thm-connection-brownian})
is about the connection between the time reversal $(x^{R,-}(s))_{s\in[0,T]}$,
where $x^{R,-}(s)=x^R(T-s)$ for $s\in[0,T]$, of the so-called reverse process $(x^R(s))_{s\in[0,T]}$ of
\eqref{forward-dynamics-intro}, which satisfies~\cite{crooks-path-ensemble-pre2000,Jarzynskia2008} 
\begin{align}
  d x^R(s)  = 
  \Big(-J - \gamma \nabla V + \frac{1}{\beta} \nabla \cdot \gamma\Big)(x^R(s), T-s)\,ds 
  + \sqrt{2\beta^{-1}} \sigma(x^R(s),T-s)\,dw(s)\,,
\label{backward-dynamics-intro}
\end{align}
for $s \in [0,T]$, and the stochastic optimal control problem

\begin{align}
  \inf_{(u_s)_{s\in[0,T]}} \mathbf{E}\left(
  \int_{0}^{T} \frac{\partial V}{\partial s}(x^u(s), s)\,ds + \frac{1}{4} \int_0^T |u_s|^2\,ds\,\middle|\,x^u(0) = x\right)\,,\quad x \in \mathbb{R}^n
  \label{opt-control-problem-U-intro}
\end{align}
of the controlled forward process 
\begin{align}
  d x^u(s)  =  b(x^u(s), s)\,ds + \sigma(x^u(s), s)\, u_s\, ds + \sqrt{2\beta^{-1}} \sigma(x^u(s),s)\,dw(s) \,,
\label{dynamics-1-u-intro}
\end{align}
where $u_s \in \mathbb{R}^m$, $0 \le s \le T$, is called the feedback control force.
Note that, comparing to the forward process \eqref{forward-dynamics-intro},
there is a change of sign in front of the term $J$ in \eqref{backward-dynamics-intro}.
Concerning the optimal control problem \eqref{opt-control-problem-U-intro}--\eqref{dynamics-1-u-intro}, it is known that there exists a function $u^*: \mathbb{R}^n\times [0, T]\rightarrow \mathbb{R}^m$, such that $u^*_s=u^*(x,s)$ is the optimal control of \eqref{opt-control-problem-U-intro}--\eqref{dynamics-1-u-intro}, when the
state of the process is $x\in \mathbb{R}^n$ at time $s\in [0,T]$ (see Ref.~\onlinecite[Chapter III and Chapter IV]{fleming2006}).
Weights other than $1/4$  in front of the quadratic term in \eqref{opt-control-problem-U-intro}
can be considered as well, but one needs to rescale the coefficient (i.e.\
$1$) in the control term in \eqref{dynamics-1-u-intro} accordingly,
so that the optimally controlled process $(x^{u^*}(s))_{s\in[0,T]}$ remains
the same. We show that the time reversal $(x^{R,-}(s))_{s\in[0,T]}$ coincides with the optimally
controlled process $(x^{u^*}(s))_{s\in[0,T]}$, i.e.\ they have the same law on
the path space $C([0,T], \mathbb{R}^n)$, provided that they start from the same initial distribution.
This result is interesting as it relates processes in two different contexts,
i.e.\ one in the celebrated Crooks' fluctuation
theorem,~\cite{crooks1999,crooks-path-ensemble-pre2000} and one in the study
of optimal Monte Carlo estimators for free energy calculations based on Jarzynski's equality.~\cite{non-equilibrium-2018}

Similar results are obtained for time-inhomogeneous Langevin dynamics,
namely the relative entropy estimate (Theorem~\ref{thm-entropy-langevin})
as well as the connection between the time reversal of
reverse process and certain optimally controlled forward process (Theorem~\ref{thm-connection-langevin}).  In the
following, we briefly discuss the main difference in the estimate of relative entropy for Langevin dynamics. Consider the time-inhomogeneous Langevin dynamics 
\begin{align}
  \begin{split}
    dq(s) =&\, p(s)\,ds \\
    dp(s) =&\, -\nabla_q V(q(s),s)\,ds - \xi p(s)\,ds + \sqrt{2\beta^{-1}\xi} \,dw(s)
  \end{split}
  \label{langevin-eqn-forward-case-1-intro}
\end{align}
with the corresponding Hamiltonian $H(q,p,s)=V(q,s)+|p|^2/2$ for $q,p\in \mathbb{R}^n$, where $\xi>0$ is
a positive constant and $\nabla_q$ denotes the gradient with respect to $q$.
Denote by~$\pi_s$ the probability measure of $(q(s), p(s)) \in
\mathbb{R}^n\times \mathbb{R}^n$ at time $s$ and by $\pi^\infty_s$ the transient invariant
measure at time $s$ (similarly defined as $\nu^\infty_s$ \eqref{invariant-mu}
in the case of Brownian dynamics; see \eqref{mu-s-langevin} in Section~\ref{subsec-forward-backward-langevin}). Due to the hypoelliptic structure of 
\eqref{langevin-eqn-forward-case-1-intro}, difficulties arise when estimating the upper bound of the relative entropy, which is defined as
\begin{align}
  \mathcal{R}^{\mathrm{Lan}}(s) := D_{KL}(\pi_s\,\|\,\pi^\infty_s) 
  = \int_{\mathbb{R}^n\times\mathbb{R}^n} \, \ln
  \frac{d\pi_s}{d\pi^{\infty}_s}\,  d\pi_s\,.
  \label{entropy-sr-langevin-intro}
\end{align}
In fact, we will derive the formula of the relative entropy production rate
(Proposition~\ref{prop-production-rate-of-entropy-langevin}): 
\begin{align}
  \begin{split}
    \frac{d \mathcal{R}^{\mathrm{Lan}}(s)}{ds}  = & -\beta
    \int_{\mathbb{R}^n\times \mathbb{R}^n} \frac{\partial H}{\partial s}\,
    d\pi^\infty_s + \beta \int_{\mathbb{R}^n\times \mathbb{R}^n} \frac{\partial
    H}{\partial s}\, d\pi_s - 
    \frac{\xi}{\beta} \int_{\mathbb{R}^n\times \mathbb{R}^n}
    \left|\nabla_p \Big(\ln\frac{d\pi_s}{d\pi^\infty_s}\Big)\right|^2 \, d\pi_s\,. 
  \end{split}
  \label{production-rate-of-entropy-langevin-intro}
\end{align}
Different from \eqref{formula-dr-intro} which contains the
full gradient, only the partial gradient~$\nabla_p$ with respect to momenta $p$ appears in the last integral of
\eqref{production-rate-of-entropy-langevin-intro}. This brings obstacles when
estimating the upper bound of $\mathcal{R}^{\mathrm{Lan}}(s)$ using logarithmic Sobolev inequality of $\pi^\infty_s$. To overcome this difficulty, we apply the hypocoercivity theory developed in Ref.~\onlinecite{villani2009hypocoercivity} and estimate $\mathcal{R}^{\mathrm{Lan}}(s)$ by considering a modified
functional. 

Let us next mention several previous work on related topics. 
Relative entropy (estimate) has been widely considered in the study of partial
differential equations,~\cite{carrillo2003,villani2009hypocoercivity} functional
inequalities,~\cite{ledoux2001concentration,villani2008optimal}
gradient flows,~\cite{ambrosio2005gradient} coarse-graining of
stochastic processes~\cite{effective_dynamics,Duong_2018} in mathematics community, as
well as in the study of nonequilibrium thermodynamics in physics community.~\cite{crooks1999,origin-of-entropy-gehao,hongqian-decomp} The convergence of Langevin dynamics towards equilibrium has
been studied using hypocoercivity theory~\cite{Talay2002StochasticHS,isotropic-Hypoellipticity-nier,villani2009hypocoercivity,DOLBEAULT2009-linear-relaxation,exponential-rate-schmeister-2012}
(see Refs.~\onlinecite{Mattingly2002,Bellet2006} for the approach using Lyapunov
techniques, and Ref.~\onlinecite{eberle2019} for coupling approach).
The weak convergence of a Langevin dynamics in the long-time small-friction limit 
was studied in Ref.~\onlinecite{hairer-Pavliotis-langevin2008}. The asymptotics in the
overdamped limit as well as numerical splitting schemes based on Langevin dynamics has
been analyzed in
Refs.~\onlinecite{lelievre_stoltz_2016,Leimkuhler-stoltz-2015}. Beyond
the equilibrium regime, by extending the entropy method,~\cite{arnold-markowich-toscani-etc-2001,entropy-method-ref}
Ref.~\onlinecite{achleitner-arnold-stuerzer} and Ref.~\onlinecite{arnald-carlen-ju-2008} studied the entropy decay of non-symmetric
Fokker-Planck type equations under a generalized Bakry-Emery condition. The
recent work Refs.~\onlinecite{bouin-homann-mouhot,Iacobucci2019} considered
the convergence rates of some nonequilibrium Langevin dynamics under external
forcing in a perturbative regime and established exponential convergence to
equilibrium with explicit rates. In other directions, we note that the reverse
process has drawn considerable attentions in nonequilibrium thermodynamics.~\cite{crooks1999,Chetrite2008,Jarzynskia2008} The optimal control problem \eqref{opt-control-problem-U-intro}--\eqref{dynamics-1-u-intro} related to Jarzynski's
equality has been considered in Ref.~\onlinecite{non-equilibrium-2018}.
Backward SDEs and controlled dynamics are also related to the so-called
Schr{\"o}dinger bridge problem, which has found applications in data assimilation~\cite{reich_2019} and the development of new Monte Carlo
schemes.~\cite{schroedinger-bridge-sampler}
Lastly, the time reversal of a diffusion process has been studied in Ref.~\onlinecite{haussmann1986}.

The current paper has the following novelties. First, concerning the results on
the upper bound of relative entropy, although this topic has been extensively studied, it seems that time-inhomogeneous processes as well as the associated relative entropy with respect to transient invariant
probability measures are less studied in literature.
Let us also emphasize that, different from the previous work Refs.~\onlinecite{hairer-Pavliotis-langevin2008,Iacobucci2019} which focused on 
the convergence (rate) of the processes in terms of certain parameters, our
aim is to derive the equations of the relative entropy production rate in
time-inhomogeneous case (see \eqref{formula-dr-intro} and
\eqref{production-rate-of-entropy-langevin-intro}) and to demonstrate their usefulness.
Second, concerning the results on the connection between the time reversal of
the reverse process and the optimally controlled process, the results are new to the best of the author's knowledge. 
At the technical level, the SDE of the time reversal
process obtained in mathematics community~\cite{haussmann1986} 
seems to be less known to the physics community, and the time reversal is
often studied by considering time discretization. In this paper, we 
prove the connection between two different processes by making use of this SDE. 

The paper is organized as follows. In Section~\ref{sec-overdamped}, we study time-inhomogeneous Brownian dynamics and obtain the aforementioned results. Langevin dynamics and the
corresponding results are considered in Section~\ref{sec-langevin}. 
The optimal control problems and their relations to Jarzynski's equality are
discussed in Appendix~\ref{app-sec-control-jarzynski}. Finally,
the proof of relative entropy estimate for Langevin dynamics is given in Appendix~\ref{app-sec-entropy-estimate-langevin}.

Before concluding this section, we introduce notation and recall
some useful inequalities related to probability measures. 
We denote by $I_n \in \mathbb{R}^{n\times n}$ the identity matrix of order $n$. 
For a vector $v\in \mathbb{R}^n$, $|v|$ is the Euclidean norm of $v$.
Let $A\in \mathbb{R}^{n\times n}$ be an $n$ by $n$ matrix.
The $2$-matrix norm of $A$, denoted by $\|A\|_2$, is the smallest nonnegative real number such that $|Av| \le \|A\|_2 |v|$ for all $v \in \mathbb{R}^n$.
  The operator $A:\nabla^2$ denotes the contraction between $A$ and the
Hessian operator $\nabla^2$, i.e.\ $A:\nabla^2 f = \sum\limits_{1 \le i,j\le n} A_{ij}\frac{\partial^2
f}{\partial x_i\partial x_j}$, for a $C^2$-smooth function $f: \mathbb{R}^n \rightarrow \mathbb{R}$.
We denote by $\mathbf{P}$ and $\mathbf{E}$ the probability and the
mathematical expectation on $\mathbb{R}^n$ (or on path space), respectively. 
Given a function $f:
\mathbb{R}^n\times [0, T]\rightarrow \mathbb{R}$ and time $t \in [0,T]$, 
$f(\cdot, t)$ is a function 
mapping from $\mathbb{R}^n$ to $\mathbb{R}$. We assume that a matrix-valued
function $\sigma: \mathbb{R}^n \times [0, T] \rightarrow \mathbb{R}^{n \times
m}$ is given such that $\gamma(x,s) = (\sigma\sigma^\top)(x,s)$ is
uniformly positive definite matrix, for all $(x,s)\in \mathbb{R}^n\times
[0,T]$. Precisely, we assume that there exists a positive constant $\gamma^->0$, such that 
\begin{align}
  v^\top \gamma(x,s) v  \ge \gamma^- |v|^2 \,, \quad \forall~v, x \in
  \mathbb{R}^n~~\mbox{and}~~\forall~s\in [0,T].
  \label{elliptic-gamma}
\end{align}
Note that this requires in particular that $m \ge n$ and $\sigma$ has full rank $n$.
For the matrix-valued function $\gamma: \mathbb{R}^n\times [0,T]\rightarrow \mathbb{R}^{n\times n}$ whose entries are
$\gamma_{ij}$, where $1 \le i,j\le n$, we denote by $\nabla\cdot \gamma$ 
the function mapping from $\mathbb{R}^n\times [0,T]$ to $\mathbb{R}^{n}$,
whose $i$th component is $(\nabla \cdot \gamma)_i = \sum_{j=1}^n \frac{\partial \gamma_{ij}}{\partial
  x_j}$, where $1 \le i \le n$. 
Next, let us recall some definitions and inequalities related to
probability measures.~\cite{villani2008optimal} 
Since we will apply the following results to measures in Euclidean spaces of different dimensions,
we consider a general Euclidean space $\mathbb{R}^d$, where $d\ge 1$. 
Given two probability measures $\mu$ and~$\nu$ on $\mathbb{R}^d$,  
their total variation is 
\begin{equation}
  \|\mu - \nu\|_{TV} = 2\inf \mathbf{P} [X\neq Y] 
  \label{def-tv}
\end{equation}
where the infimum is over all couplings $(X,Y)$ of $(\mu, \nu)$. Let 
\begin{equation*}
  \mathbf{B}^{\infty}(\mathbb{R}^d) =\{f:\mathbb{R}^d\rightarrow \mathbb{R}\,|\, f\,
  \mbox{is measurable and everywhere bounded}\}
\end{equation*}
be the Banach space endowed with the supremum norm $\|f\|_\infty := \sup_{x\in
\mathbb{R}^d} |f(x)|$, where $f \in \mathbf{B}^{\infty}(\mathbb{R}^d)$.
Similar definition can be made for vector-valued functions (i.e.\ with the supremum of $\ell^2$ vector norm).
For all $f\in \mathbf{B}^\infty(\mathbb{R}^d)$, 
\eqref{def-tv} implies 
\begin{equation}
  \left|\int_{\mathbb{R}^d} f\, d\mu - \int_{\mathbb{R}^d} f\,d\nu\right| \le
  \|f\|_\infty \|\mu-\nu\|_{TV}\,.
  \label{integration-bounded-by-tv}
\end{equation}
We write $\nu \ll \mu$ whenever $\nu$ is absolutely continuous with respect to $\mu$. 
The relative entropy of $\nu$ with respect to $\mu$ (also called the
Kullback-Leibler divergence from $\mu$ to $\nu$), where $\nu \ll \mu$, is 
\begin{equation}
D_{KL}(\nu\,\|\,\mu) = \int_{\mathbb{R}^d} \ln \frac{d\nu}{d\mu}\, d\nu\,.
  \label{kl-divergence}
\end{equation}
Csisz\'ar-Kullback-Pinsker inequality~(see Ref.~\onlinecite[Remark 22.12]{villani2008optimal}) states 
  \begin{align}
     \|\mu - \nu\|_{TV} \le  \sqrt{2D_{KL}(\nu\,\|\,\mu)} \,.
     \label{csiszar-kullback-pinsker}
  \end{align}
We also introduce the Fisher information
\begin{equation}
  \mathcal{I}(\nu\,\|\,\mu) 
  = \int_{\mathbb{R}^d} \Big|\nabla \left(\ln\frac{d\nu}{d\mu}\right)\Big|^2 \, d\nu\,. 
  \label{fisher-info}
\end{equation}
The probability measure $\mu$ satisfies the logarithmic Sobolev inequality
with constant $\kappa>0$~(see Ref.~\onlinecite[Definition 21.1]{villani2008optimal}
and Ref.~\onlinecite[Chapter 5]{ledoux2001concentration}), if 
\begin{align}
  D_{KL}(\nu\,\|\,\mu) \le \frac{1}{2\kappa} \mathcal{I}(\nu\,\|\,\mu)\,,
  \label{lsi}
\end{align}
for all probability measures $\nu$ such that $\nu \ll \mu$ and
$\mathcal{I}(\nu\,\|\,\mu)$ is finite.
For $p\in [1,+\infty)$, we denote by $\mathbf{W}_p(\mu, \nu)$ the Wasserstein distance of
order $p$ between $\mu$ and $\nu$, defined by 
(see Ref.~\onlinecite[Definition 6.1]{villani2008optimal} and Ref.~\onlinecite[Chapter 7]{ambrosio2005gradient}) 
\begin{equation}
  \mathbf{W}_p(\mu, \nu)=\inf\left\{(\mathbf{E}|X-Y|^p)^{\frac{1}{p}},~ \mbox{law}(X)=\mu, ~ \mbox{law}(Y)=\nu\right\},
\end{equation}
where the infimum is over all couplings $(X,Y)$ of $(\mu, \nu)$.
In this paper we use the Wasserstein distances $\mathbf{W}_1$ and
$\mathbf{W}_2$, which satisfy (by H\"{o}lder's inequality)
\begin{equation}
  \mathbf{W}_1(\mu,\nu) \le \mathbf{W}_2(\mu, \nu)\,,
  \label{w1-bounded-by-w2}
\end{equation}
for all probability measures $\mu, \nu$, whenever $\mathbf{W}_2(\mu, \nu) < \infty$.
Furthermore, for all Lipschitz functions $f$ on $\mathbb{R}^d$ with Lipschitz constant $\|f\|_{\textrm{Lip}}$, 
assuming $f$ is integrable with respect to both $\mu$ and $\nu$, we have 
\begin{align}
  \left|\int_{\mathbb{R}^d} f\, d\mu - \int_{\mathbb{R}^d} f\,d\nu\right| \le
  \|f\|_{\textrm{Lip}}\, \mathbf{W}_1(\mu,\nu)\,.
  \label{Lipschitz-w1}
\end{align}
When $\mu$ satisfies the logarithmic Sobolev inequality with constant $\kappa>0$, $\mu$ also
satisfies the Talagrand inequality with the same constant
$\kappa$,~\cite{OTTO-villani-on-talagrand} i.e.\ for
any probability measure $\nu \ll \mu$, 
      \begin{align}
	\mathbf{W}_2(\mu, \nu) \le \sqrt{\frac{2}{\kappa} D_{KL}(\nu\,\|\,\mu)}\,.
	\label{talagrand-ineq}
      \end{align}
\section{Time-inhomogeneous Brownian dynamics}
\label{sec-overdamped}
In this section, we consider time-inhomogeneous Brownian dynamics in~$\mathbb{R}^n$.
After introducing the forward and reverse processes as well as related
quantities in Section~\ref{subsec-notations-brownian}, we derive the 
entropy production rate formula and the upper bounds of relative entropy in Section~\ref{subsec-entropy-brownian}. Finally, in Section~\ref{subsec-crooks-control}, we establish the connection between the time
reversal of the reverse process and certain optimally controlled forward process.
\subsection{Forward and reverse processes}
\label{subsec-notations-brownian}
First, we introduce the forward process which will be studied in Section~\ref{subsec-entropy-brownian}.
Let $V:\mathbb{R}^n \times [0,T] \rightarrow \mathbb{R}$ be a time-dependent
$C^\infty$-smooth potential function in the time interval $[0,T]$, where $T>0$. 
The vector field $J:\mathbb{R}^{n} \times [0,T]\rightarrow \mathbb{R}^n$
satisfies the condition \eqref{div-j-zero}. Let $\beta > 0$ be a constant and $(w(s))_{s\in[0,T]}$ be an $m$-dimensional Brownian motion. We are interested in the time-inhomogeneous forward process in $\mathbb{R}^n$,
which satisfies the SDE
\begin{align}
  \begin{split}
  d x(s)     =\, & b(x(s),s)\,ds + \sqrt{2\beta^{-1}} \sigma(x(s),s)\,dw(s)\\
    =\,& \Big(J - \gamma \nabla V + \frac{1}{\beta} \nabla \cdot \gamma\Big)(x(s),
  s)\,ds + \sqrt{2\beta^{-1}} \sigma(x(s),s)\,dw(s)\,, \quad s \in [0,T]\,.
  \end{split}
\label{dynamics-1-q-vector}
\end{align}
In \eqref{dynamics-1-q-vector}, the drift vector $b: \mathbb{R}^n\times[0,T] \rightarrow \mathbb{R}^n$ is 
\begin{align}
  b = J - \gamma \nabla V + \frac{1}{\beta} \nabla \cdot \gamma\,,
  \label{drift-vector-b}
\end{align}
and $\gamma$ is related to $\sigma$ by $\gamma=\sigma\sigma^\top$ and satisfies the condition \eqref{elliptic-gamma}.
Following the terminology in Ref.~\onlinecite[Introduction]{markov-process-eberle}, 
we call the operator $\mathcal{L}_s$, defined by
\begin{align}
  \begin{split}
  \mathcal{L}_s f =& \Big[\big(J - \gamma \nabla V + \frac{1}{\beta} \nabla \cdot
  \gamma\big)(\cdot, s)\Big] \cdot \nabla f + \frac{1}{\beta} \gamma(\cdot,s):\nabla^2 f\\
    = & J \cdot \nabla f + \frac{\mathrm{e}^{\beta V}}{\beta} \mbox{div}
    (\mathrm{e}^{-\beta V} \gamma \nabla f)\,,
  \end{split}
  \label{l-forward}
\end{align}
for a test function $f: \mathbb{R}^n \rightarrow \mathbb{R}$, the generator of \eqref{dynamics-1-q-vector} at
time $s$. For fixed $s \in [0, T]$, under the assumption \eqref{div-j-zero} on $J$ and proper conditions
on $V$ and $\gamma$ (see Refs.~\onlinecite{Mattingly2002,roberts-tweedie-1996}, for instance),
the process \eqref{forward-dynamics-intro-fixed-s}, whose generator is $\mathcal{L}_s$,
has a unique invariant distribution $\nu_{s}^{\infty}$ given by the Gibbs measure \eqref{invariant-mu}.
In this case, the second line of \eqref{l-forward} is a decomposition of
$\mathcal{L}_s$ into the anti-symmetric and symmetric parts in $L^2(\mathbb{R}^n, \nu_{s}^{\infty})$.
Denote by $\mathcal{L}^*_s$ the adjoint operator of $\mathcal{L}_s$ in $L^2(\mathbb{R}^n, \nu_{s}^{\infty})$ 
(with respect to the weighted inner product defined by $\nu_{s}^{\infty}$).
Integrating by parts, together with \eqref{div-j-zero}, \eqref{invariant-mu} and \eqref{l-forward}, gives
\begin{equation}
  \begin{aligned}
  \mathcal{L}^*_s f = & -J \cdot \nabla f + \frac{\mathrm{e}^{\beta V}}{\beta} \mbox{div}
    (\mathrm{e}^{-\beta V} \gamma \nabla f)\,,
  \end{aligned}
  \label{l-adjoint}
\end{equation}
which differs from $\mathcal{L}_s$ \eqref{l-forward} by a change of sign in the anti-symmetric part. 

For each $s \in [0, T]$, we denote by $\nu_s$ the probability distribution of
$x(s)\in \mathbb{R}^n$, and we assume that $\nu_s$ has a 
positive and $C^\infty$-smooth probability density
$\rho$ with respect to the Lebesgue measure (see
Remark~\ref{rmk-regularity-of-rho-brownian} below), i.e.\ $\rho>0$ and 
\begin{equation}
  d\nu_s(x) = \rho(x,s)\, dx\,, \quad \forall~ s \in [0,T]\,.
  \label{distribution-nu}
\end{equation}
It is well known that $\rho$ satisfies the Fokker-Planck equation (see
Ref.~\onlinecite[Chapter 4]{risken1996fokker}), which, with the adjoint operator 
$\mathcal{L}^*_s$ \eqref{l-adjoint}, reads
~\footnote{Here $\rho$ is the density of $\nu_s$ with respect to the Lebesgue
measure and satisfies Fokker-Planck equation $\frac{\partial \rho}{\partial s}
= \mathcal{L}^\dagger_s \rho$, where $\mathcal{L}^\dagger_s$ 
is the adjoint operator of $\mathcal{L}_s$ in $L^2(\mathbb{R}^n)$ with respect
to the Lebesgue measure. It is direct to check that 
$\mathcal{L}^\dagger_s f = \mathrm{e}^{-\beta V} \mathcal{L}^*_s
(\mathrm{e}^{\beta V}f)$, for a test function $f$}
\begin{align}
  \frac{\partial \rho}{\partial s} = \mathrm{e}^{-\beta V} \mathcal{L}^*_s (\mathrm{e}^{\beta V}\rho)\,. 
  \label{fokker-planck}
\end{align}
The free energy of the process \eqref{dynamics-1-q-vector} at time $s$ is
defined as (recall $Z(s)$ in \eqref{invariant-mu})
\begin{align}
  F(s) = -\beta^{-1} \ln Z(s)\,,\quad ~s\in[0,T]\,.
  \label{free-energy-f}
\end{align}

Next, we introduce the reverse process which will be the subject of Section~\ref{subsec-crooks-control}.
Corresponding to the forward process \eqref{dynamics-1-q-vector}, the reverse
process is defined by 
\begin{align}
  \begin{split}
  d x^{R}(s) =& \Big(-J - \gamma \nabla V + \frac{1}{\beta} \nabla \cdot
    \gamma\Big)(x^R(s), T-s)\,ds + \sqrt{2\beta^{-1}} \sigma(x^R(s),T-s)\,dw(s)\,, 
  \end{split}
\label{dynamics-1-q-vector-backward}
\end{align}
where $s \in [0,T]$ and $(w(s))_{s\in[0,T]}$ is another Brownian motion in $\mathbb{R}^m$.
Similar to \eqref{l-forward}, for the reverse process
\eqref{dynamics-1-q-vector-backward} and for each $s \in [0, T]$, we define the operator 
\begin{align}
  \mathcal{L}^R_s f = \Big[\big(-J - \gamma \nabla V + \frac{1}{\beta} \nabla \cdot
  \gamma\big)(\cdot, T-s)\Big] \cdot \nabla f + \frac{1}{\beta} \gamma(\cdot,T-s):\nabla^2 f\,,
  \label{l-r-s}
\end{align}
for a test function $f: \mathbb{R}^n \rightarrow \mathbb{R}$.
The following lemma summarizes the relations among the operators introduced above.
\begin{lemma}
  Given $f,g\in C^2(\mathbb{R}^n)$, for all $s\in [0,T]$, we have
  $\mathcal{L}^*_s = \mathcal{L}^R_{T-s}$, and 
\begin{align}
     \int_{\mathbb{R}^n} f\big(\mathcal{L}_s + \mathcal{L}_s^*\big)g
    \,d\nu^{\infty}_s = -\frac{2}{\beta} \int_{\mathbb{R}^n} (\gamma\nabla f)\cdot\nabla
    g\, d\nu^{\infty}_s\,.
  \label{l-lr-dual}
\end{align}
Furthermore, when $g$ is positive, we have
\begin{equation}
  \begin{aligned}
    \frac{\mathcal{L}_s g}{g} =&~ \mathcal{L}_s (\ln g) +\frac{1}{\beta}
    |\sigma^\top\nabla\ln g|^2\,,\\
    \frac{\mathcal{L}^*_s g}{g} =&~ \mathcal{L}^*_s (\ln g)
    +\frac{1}{\beta} |\sigma^\top\nabla\ln g|^2\,.
  \end{aligned}
  \label{lg-llogg}
\end{equation}
  \label{lemma-generator-property}
\end{lemma}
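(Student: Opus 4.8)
The plan is to prove all three assertions by direct computation, proceeding in the order in which they are stated. For the identity $\mathcal{L}^*_s = \mathcal{L}^R_{T-s}$ I would start from the compact expression \eqref{l-adjoint} for $\mathcal{L}^*_s$ and expand the weighted divergence by the product rule: using that $\gamma = \sigma\sigma^\top$ is symmetric one has $\mathrm{e}^{\beta V}\,\mathrm{div}(\mathrm{e}^{-\beta V}\gamma\nabla f) = -\beta(\gamma\nabla V)\cdot\nabla f + \mathrm{div}(\gamma\nabla f)$ and $\mathrm{div}(\gamma\nabla f) = (\nabla\cdot\gamma)\cdot\nabla f + \gamma:\nabla^2 f$, which recasts \eqref{l-adjoint} as $\mathcal{L}^*_s f = \big(-J - \gamma\nabla V + \tfrac{1}{\beta}\nabla\cdot\gamma\big)(\cdot,s)\cdot\nabla f + \tfrac{1}{\beta}\gamma(\cdot,s):\nabla^2 f$. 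Comparing with the definition \eqref{l-r-s} of $\mathcal{L}^R_s$ evaluated at $T-s$ gives the claim at once.

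For the bilinear identity \eqref{l-lr-dual} I would add \eqref{l-forward} and \eqref{l-adjoint}: the anti-symmetric drift $J$ cancels, leaving $(\mathcal{L}_s + \mathcal{L}^*_s)g = \tfrac{2}{\beta}\mathrm{e}^{\beta V}\,\mathrm{div}(\mathrm{e}^{-\beta V}\gamma\nabla g)$. Multiplying by $f$ and integrating against $d\nu^\infty_s = Z(s)^{-1}\mathrm{e}^{-\beta V}\,dx$ cancels the factor $\mathrm{e}^{\beta V}$, and one integration by parts gives $\int_{\mathbb{R}^n} f\,\mathrm{div}(\mathrm{e}^{-\beta V}\gamma\nabla g)\,dx = -\int_{\mathbb{R}^n}\nabla f\cdot(\mathrm{e}^{-\beta V}\gamma\nabla g)\,dx$; using the symmetry of $\gamma$ and restoring the normalization $Z(s)^{-1}$ yields $-\tfrac{2}{\beta}\int_{\mathbb{R}^n}(\gamma\nabla f)\cdot\nabla g\,d\nu^\infty_s$, which is \eqref{l-lr-dual}.

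The transforms \eqref{lg-llogg} follow from the chain rule. For positive $g$, set $h = \ln g$ so that $\nabla g = g\,\nabla h$ and $\nabla^2 g = g(\nabla^2 h + \nabla h\,\nabla h^\top)$. Both $\mathcal{L}_s$ and $\mathcal{L}^*_s$ consist of a first-order drift term plus the common principal part $\tfrac{1}{\beta}\gamma:\nabla^2$; applying such an operator to $g$, dividing by $g$, and using $\gamma:(\nabla h\,\nabla h^\top) = \nabla h^\top\gamma\nabla h = |\sigma^\top\nabla h|^2$ (since $\gamma = \sigma\sigma^\top$) gives $\tfrac{\mathcal{L}_s g}{g} = \mathcal{L}_s(\ln g) + \tfrac{1}{\beta}|\sigma^\top\nabla\ln g|^2$, and the identical computation with the adjoint drift gives the second line of \eqref{lg-llogg}.

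I do not expect a genuine obstacle here: the whole lemma reduces to the product rule, the chain rule, and a single integration by parts. The only point requiring care is the vanishing of the boundary term at infinity in the integration by parts of the second step, which I would justify by appealing to the decay/growth conditions on $V$ and $\gamma$ (and, where needed, to the smoothness and integrability of $f$, $g$ and their gradients against $\nu^\infty_s$) already assumed for the existence of the Gibbs invariant measure \eqref{invariant-mu} and the validity of the Fokker-Planck equation \eqref{fokker-planck}.
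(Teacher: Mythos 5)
Your proposal is correct and follows exactly the paper's own (very terse) argument: direct comparison of \eqref{l-adjoint} with \eqref{l-r-s} after expanding the weighted divergence, summation of \eqref{l-forward} and \eqref{l-adjoint} followed by one integration by parts against $\nu^\infty_s$, and the chain rule for $\ln g$ using $\gamma=\sigma\sigma^\top$. You merely supply the computational details (and the sensible caveat about the boundary term) that the paper leaves implicit.
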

\begin{proof}
  The identity $\mathcal{L}^*_s = \mathcal{L}^R_{T-s}$ follows directly from
  \eqref{l-adjoint} and \eqref{l-r-s}.
  The identity \eqref{l-lr-dual} is obtained by summing up
  \eqref{l-forward} and \eqref{l-adjoint}, and integrating by parts.
  The identities in \eqref{lg-llogg} can be verified directly using
  \eqref{l-forward} and \eqref{l-r-s}, together with the relation
  $\gamma=\sigma\sigma^\top$. 
\end{proof}

\begin{remark}
  Under the condition~\eqref{elliptic-gamma}, the smoothness of the solution
  $\rho$ to \eqref{fokker-planck} follows from the standard theory for parabolic partial differential equations (see Ref.~\onlinecite[Chapter
  11]{lorenzi2006analytical} and Ref.~\onlinecite[Chapter 6]{Bogachev:2264101} for instance). 
The positivity of $\rho$ can be argued by representing it as certain ensemble average
  of the reverse process \eqref{dynamics-1-q-vector-backward} using
  Feynman-Kac formula. 
  See \eqref{solution-of-rho-reverse} in Section~\ref{subsec-crooks-control} 
 for the representation of the density of the reverse process starting from
  $\nu_T^\infty$, and also the proof of Ref.~\onlinecite[Theorem 3.3.6.1]{michel-pardoux-1990} for details. 
Results on the positivity of densities can be found in Ref.~\onlinecite[Chapter 8.3]{Bogachev:2264101} 
as well.
  \label{rmk-regularity-of-rho-brownian}
\end{remark}
Before concluding, we note that Lemma~\ref{lemma-generator-property} can be
used to give a concise derivation of the fluctuation relation between the
forward process~\eqref{dynamics-1-q-vector} and the reverse process~\eqref{dynamics-1-q-vector-backward}. Since this is not the main purpose of this paper, we omit the discussions and
refer to Refs.~\onlinecite{Chetrite2008,non-equilibrium-2018} for details.

\subsection{Relative entropy estimate}
\label{subsec-entropy-brownian}
For $s\in[0,T]$, recall that $\nu_s$ is the probability distribution of
$x(s)\in \mathbb{R}^n$ whose density is $\rho(\cdot, s)$ in \eqref{distribution-nu} and that
$\nu^\infty_s$ is defined in \eqref{invariant-mu}. In this section, we study the relative entropy of $\nu_s$ with respect to $\nu^\infty_s$, defined as 
\begin{align}
  \mathcal{R}^{\mathrm{Bro}}(s) := D_{KL}(\nu_s\,\|\,\nu^\infty_s) = \int_{\mathbb{R}^n} \ln
  \frac{d\nu_s}{d\nu^\infty_s}(x)\, d\nu_s(x)
  = \int_{\mathbb{R}^n} \rho(x,s)\, \ln \frac{d\nu_s}{d\nu^\infty_s}(x)\,  dx \,.
  \label{entropy-sr}
\end{align}
The main result of this section is Theorem~\ref{thm-entropy-estimate-brownian}, 
which provides upper bounds of $\mathcal{R}^{\mathrm{Bro}}(s)$. 
Before stating Theorem~\ref{thm-entropy-estimate-brownian}, we first present the following useful results.
\begin{lemma}
  For $s\in[0,T]$, we have 
\begin{align}
  \frac{\partial }{\partial s} \Big(\ln\frac{d\nu_s}{d\nu^\infty_s}\Big)
  =& 
  \beta \Big(\frac{\partial V}{\partial s}
  - \frac{d F}{ds} \Big)
   +
  \mathcal{L}_s^* \Big(\ln\frac{d\nu_s}{d\nu^\infty_s}\Big)+
  \frac{1}{\beta}\Big|\sigma^\top\nabla \left(\ln\frac{d\nu_s}{d\nu^\infty_s} \right)\Big|^2 \,.
  \label{log-pde-relative-density}
\end{align}
  \label{lemma-equation-of-log-density-nu-to-mu}
\end{lemma}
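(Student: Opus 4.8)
The plan is to reduce \eqref{log-pde-relative-density} to a short pointwise computation using the Fokker--Planck equation \eqref{fokker-planck} together with the algebraic identities \eqref{lg-llogg} from Lemma~\ref{lemma-generator-property}. First I would introduce the likelihood ratio $g(\cdot, s) := \frac{d\nu_s}{d\nu^\infty_s}$. By \eqref{invariant-mu}, \eqref{distribution-nu} and the definition \eqref{free-energy-f} of the free energy, $g(x,s) = Z(s)\,\rho(x,s)\,\mathrm{e}^{\beta V(x,s)} = \mathrm{e}^{-\beta F(s)}\,\rho(x,s)\,\mathrm{e}^{\beta V(x,s)}$. Since $\rho$ is positive and $C^\infty$-smooth (Remark~\ref{rmk-regularity-of-rho-brownian}) and $V$ is $C^\infty$-smooth, the function $g$ is positive and smooth, so $\ln g = -\beta F(s) + \ln\rho + \beta V$ is well defined, and differentiating in $s$ gives $\frac{\partial}{\partial s}\ln\frac{d\nu_s}{d\nu^\infty_s} = -\beta\frac{dF}{ds} + \frac{1}{\rho}\frac{\partial\rho}{\partial s} + \beta\frac{\partial V}{\partial s}$. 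Thus the claim will follow once $\frac{1}{\rho}\frac{\partial\rho}{\partial s}$ is shown to equal $\mathcal{L}^*_s\big(\ln\frac{d\nu_s}{d\nu^\infty_s}\big) + \frac{1}{\beta}\big|\sigma^\top\nabla(\ln\frac{d\nu_s}{d\nu^\infty_s})\big|^2$, since the two remaining terms are exactly $\beta\big(\frac{\partial V}{\partial s} - \frac{dF}{ds}\big)$.

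For that, I would use the Fokker--Planck equation \eqref{fokker-planck}, namely $\frac{\partial\rho}{\partial s} = \mathrm{e}^{-\beta V}\mathcal{L}^*_s(\mathrm{e}^{\beta V}\rho)$. Since $\mathrm{e}^{\beta V}\rho = \mathrm{e}^{\beta F(s)}g$ and $\mathcal{L}^*_s$ is linear and acts only in the spatial variable, the time-dependent scalar $\mathrm{e}^{\beta F(s)}$ factors out, giving $\frac{\partial\rho}{\partial s} = \mathrm{e}^{\beta F(s)}\mathrm{e}^{-\beta V}\mathcal{L}^*_s g$; dividing by $\rho = \mathrm{e}^{\beta F(s)}\mathrm{e}^{-\beta V}g$, which is legitimate since $\rho>0$ and $g>0$, yields $\frac{1}{\rho}\frac{\partial\rho}{\partial s} = \frac{\mathcal{L}^*_s g}{g}$. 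Applying the second identity in \eqref{lg-llogg} to the positive function $g$ then gives $\frac{\mathcal{L}^*_s g}{g} = \mathcal{L}^*_s(\ln g) + \frac{1}{\beta}|\sigma^\top\nabla\ln g|^2$, and substituting back (recalling $g = \frac{d\nu_s}{d\nu^\infty_s}$) produces exactly \eqref{log-pde-relative-density}.

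There is no deep obstacle here; the argument is a short chain of substitutions. The only points needing care are: (i) justifying that $g$ is positive and smooth so that $\ln g$ is defined and \eqref{lg-llogg} applies, which is supplied by Remark~\ref{rmk-regularity-of-rho-brownian} and the smoothness of $V$; (ii) tracking the purely time-dependent factor $\mathrm{e}^{\pm\beta F(s)}$, which is annihilated by $\nabla$ and by $\mathcal{L}^*_s$ and hence enters the computation only through the explicit $-\beta\frac{dF}{ds}$ term; and (iii) observing that the $\beta\frac{\partial V}{\partial s}$ and $-\beta\frac{dF}{ds}$ contributions arising from differentiating $\ln g$ are precisely the first summand on the right-hand side of \eqref{log-pde-relative-density}, so they pass straight through without further manipulation.
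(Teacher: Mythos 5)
Your proposal is correct and follows essentially the same route as the paper: both rest on the identity $\frac{d\nu_s}{d\nu^\infty_s}=Z(s)\,\mathrm{e}^{\beta V}\rho$, the Fokker--Planck equation \eqref{fokker-planck}, the definition \eqref{free-energy-f} of $F$, and the second identity in \eqref{lg-llogg}. The only (immaterial) difference is the order of operations --- you take the logarithm first and then differentiate in $s$, whereas the paper differentiates the ratio first and then divides by it before invoking \eqref{lg-llogg}.
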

\begin{proof}
  From \eqref{distribution-nu} and \eqref{invariant-mu}, we have 
  \begin{equation}
    \frac{d\nu_s}{d\nu^{\infty}_s}(x) = \mathrm{e}^{\beta V(x,s)}
\rho(x,s) Z(s)\,, \quad x \in \mathbb{R}^n\,.
    \label{density-of-nu-wrt-mu}
  \end{equation}
  Using the Fokker-Planck equation \eqref{fokker-planck} and the free energy \eqref{free-energy-f}, we derive
  \begin{equation*}
\begin{aligned}
   \frac{\partial}{\partial s} \Big(\frac{d\nu_s}{d\nu^{\infty}_s}\Big) 
  =& \mathcal{L}^*_s \big(\mathrm{e}^{\beta V}\rho\big) Z(s)
  + \beta \frac{\partial V}{\partial s} \frac{d\nu_s}{d\nu^{\infty}_s} 
  +\mathrm{e}^{\beta V} \rho\,\frac{dZ(s)}{ds} \\
  =& \mathcal{L}^*_s \Big(\frac{d\nu_s}{d\nu^{\infty}_s}\Big)
  + \beta \frac{\partial V}{\partial s} \frac{d\nu_s}{d\nu^{\infty}_s} 
  -\beta  \mathrm{e}^{\beta V}\rho\, Z(s)\frac{dF}{ds} \\
  =& \beta\Big(\frac{\partial V}{\partial s} - \frac{dF}{ds}
  \Big)\frac{d\nu_s}{d\nu^{\infty}_s} + \mathcal{L}^*_s \Big(\frac{d\nu_s}{d\nu^{\infty}_s}\Big)\,.
\end{aligned}
%  \label{dnu-dmu-pde}
  \end{equation*} 
Therefore, \eqref{log-pde-relative-density} is obtained after applying
  \eqref{lg-llogg} of Lemma~\ref{lemma-generator-property}.  
\end{proof}

Using Lemma~\ref{lemma-equation-of-log-density-nu-to-mu}, we can derive an
expression of the time derivative of $\mathcal{R}^{\mathrm{Bro}}(s)$,
i.e.\ the expression of the relative entropy production rate. 
\begin{prop}
  For all $s\in [0, T]$, we have 
\begin{align}
    \frac{d \mathcal{R}^{\mathrm{Bro}}(s)}{ds}  =& -\beta \int_{\mathbb{R}^n} \frac{\partial V}{\partial s} d\nu^{\infty}_s + \beta \int_{\mathbb{R}^n} \frac{\partial
    V}{\partial s}\, d\nu_s - 
  \frac{1}{\beta} \int_{\mathbb{R}^n}  \Big|\sigma^\top\nabla
  \left(\ln\frac{d\nu_s}{d\nu^{\infty}_s}\right)\Big|^2 \, d\nu_s\,. 
    \label{formula-dr}
\end{align}
  \label{prop-production-rate-of-entropy}
\end{prop}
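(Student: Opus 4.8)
The plan is to write $\mathcal{R}^{\mathrm{Bro}}(s)=\int_{\mathbb{R}^n}h(x,s)\,\rho(x,s)\,dx$ with the abbreviation $h:=\ln\frac{d\nu_s}{d\nu^\infty_s}$, differentiate in $s$, and reduce the two resulting integrals using Lemma~\ref{lemma-equation-of-log-density-nu-to-mu} for $\partial_s h$, the Fokker--Planck equation~\eqref{fokker-planck} for $\partial_s\rho$, and the duality identity~\eqref{l-lr-dual} of Lemma~\ref{lemma-generator-property}. It is convenient to set in addition $g:=e^h=\frac{d\nu_s}{d\nu^\infty_s}=e^{\beta V}\rho\,Z(s)$, so that $\rho\,dx=g\,d\nu^\infty_s$ and one can move freely between integrating against $dx$, against $d\nu_s$, and against $d\nu^\infty_s$.

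Concretely, I would first interchange differentiation and integration to obtain
\begin{equation*}
  \frac{d\mathcal{R}^{\mathrm{Bro}}}{ds}=\int_{\mathbb{R}^n}\frac{\partial h}{\partial s}\,\rho\,dx+\int_{\mathbb{R}^n}h\,\frac{\partial\rho}{\partial s}\,dx\,.
\end{equation*}
In the first integral I substitute~\eqref{log-pde-relative-density}: the term $\beta\frac{\partial V}{\partial s}$ contributes $\beta\int\frac{\partial V}{\partial s}\,d\nu_s$; the spatially constant term $-\beta\frac{dF}{ds}$ contributes $-\beta\frac{dF}{ds}$ since $\int\rho\,dx=1$, and differentiating $F=-\beta^{-1}\ln Z$ with $Z(s)=\int e^{-\beta V}\,dx$ gives $\frac{dF}{ds}=\int\frac{\partial V}{\partial s}\,d\nu^\infty_s$, i.e.\ the first term of~\eqref{formula-dr}; the term $\mathcal{L}^*_s h$ contributes $\int(\mathcal{L}^*_s h)\,g\,d\nu^\infty_s$; and the quadratic term contributes $\frac1\beta\int|\sigma^\top\nabla h|^2\,d\nu_s$. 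For the second integral I use~\eqref{fokker-planck}, $\partial_s\rho=e^{-\beta V}\mathcal{L}^*_s(e^{\beta V}\rho)$, together with $e^{\beta V}\rho\,Z(s)=g$ and $dx=Z(s)e^{\beta V}\,d\nu^\infty_s$, to get $\int h\,\partial_s\rho\,dx=\int h\,\mathcal{L}^*_s(g)\,d\nu^\infty_s=\int(\mathcal{L}_s h)\,g\,d\nu^\infty_s$, the last equality being the defining property of the adjoint.

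It remains to combine the two operator terms. Moving each operator back onto $h$ via the adjoint gives $\int(\mathcal{L}^*_s h)\,g\,d\nu^\infty_s+\int(\mathcal{L}_s h)\,g\,d\nu^\infty_s=\int h\,(\mathcal{L}_s+\mathcal{L}^*_s)g\,d\nu^\infty_s$, and applying~\eqref{l-lr-dual} with $f=h$, using $\nabla g=g\,\nabla h$ so that $(\gamma\nabla h)\cdot\nabla g=g\,|\sigma^\top\nabla h|^2$, this equals $-\frac2\beta\int|\sigma^\top\nabla h|^2\,d\nu_s$. Adding the leftover $\frac1\beta\int|\sigma^\top\nabla h|^2\,d\nu_s$ from the first integral produces the final term $-\frac1\beta\int|\sigma^\top\nabla h|^2\,d\nu_s$ of~\eqref{formula-dr}, and collecting the $\partial_s V$ pieces yields its first two terms, which proves the claim.

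The part I expect to require the most care is analytic rather than algebraic: justifying the interchange of $\frac{d}{ds}$ with the spatial integral, and the vanishing of boundary terms at infinity that is implicit both in~\eqref{l-lr-dual} and in the step $\int h\,\partial_s\rho\,dx=\int h\,\mathcal{L}^*_s(g)\,d\nu^\infty_s$. This needs sufficient decay of $\rho$ and $\nabla\rho$ and growth control on $V$ and $\gamma$; I would lean on the smoothness and positivity of $\rho$ recorded in Remark~\ref{rmk-regularity-of-rho-brownian} and the standing hypotheses on $V$ and $\sigma$, inserting a cutoff-and-limit argument if necessary, and otherwise treat these integrability matters as the technical conditions already assumed in the paper.
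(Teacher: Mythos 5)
Your proposal is correct and follows essentially the same route as the paper's proof: the same splitting of $\frac{d}{ds}\int h\rho\,dx$, the same use of Lemma~\ref{lemma-equation-of-log-density-nu-to-mu}, the Fokker--Planck equation, adjointness in $L^2(\nu^\infty_s)$, and the identity \eqref{l-lr-dual} (applied with the roles of $f$ and $g$ swapped relative to the paper, which is immaterial since both sides of \eqref{l-lr-dual} are symmetric). The algebra checks out and the closing remarks on interchanging derivative and integral and on boundary terms correctly identify the only points needing technical justification.
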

\begin{proof}
  Using the Fokker-Planck equation \eqref{fokker-planck}, integration by parts
formula, the identities in \eqref{l-lr-dual}, as well as Lemma~\ref{lemma-equation-of-log-density-nu-to-mu}, 
   we can compute from \eqref{entropy-sr} that
\begin{align*}
    & \frac{d \mathcal{R}^{\mathrm{Bro}}(s)}{ds} \notag \\
    =& \int_{\mathbb{R}^n}
    \Big(\ln \frac{d\nu_s}{d\nu^{\infty}_s}\Big) \frac{\partial \rho}{\partial s}\, dx 
    +  \int_{\mathbb{R}^n} \rho\,\frac{\partial}{\partial
    s}\Big(\ln\frac{d\nu_s}{d\nu^{\infty}_s}\Big) \, dx \notag\\
    =& \int_{\mathbb{R}^n} \Big(\ln \frac{d\nu_s}{d\nu^{\infty}_s}\Big)
    \mathrm{e}^{-\beta V} \mathcal{L}_s^* (\mathrm{e}^{\beta V}\rho) \, dx \notag \\
    & +  \int_{\mathbb{R}^n}  \bigg[\beta \Big(\frac{\partial V}{\partial s}
  - \frac{d F}{ds} \Big) +
  \mathcal{L}_{s}^* \Big(\ln\frac{d\nu_s}{d\nu^{\infty}_s}\Big)+
    \frac{1}{\beta}\left|\sigma^\top\nabla
    \left(\ln\frac{d\nu_s}{d\nu^{\infty}_s}\right) \right|^2\bigg] \,\rho\, dx\notag \\
    =& - \beta \frac{d F(s)}{ds} + \int_{\mathbb{R}^n} \bigg[\big(\mathcal{L}_s  
    + \mathcal{L}_{s}^*\big) \Big(\ln\frac{d\nu_s}{d\nu^{\infty}_s}\Big)
  + \beta \frac{\partial V}{\partial s} 
    + \frac{1}{\beta} \left|\sigma^\top\nabla
    \left(\ln\frac{d\nu_s}{d\nu^{\infty}_s}\right)\right|^2\bigg] \,\rho \, dx\notag \\
    =& - \beta \frac{d F(s)}{ds} + \beta \int_{\mathbb{R}^n} \frac{\partial V}{\partial s} d\nu_s  
    + \int_{\mathbb{R}^n} 
    \bigg[\big(\mathcal{L}_s + \mathcal{L}_{s}^*\big)
    \Big(\ln\frac{d\nu_s}{d\nu^{\infty}_s}\Big) \bigg]
    \frac{d\nu_s}{d\nu^{\infty}_s} \,d\nu^{\infty}_s\notag \\
    & + \frac{1}{\beta} \int_{\mathbb{R}^n}\left|\sigma^\top\nabla
    \left(\ln\frac{d\nu_s}{d\nu^{\infty}_s}\right)\right|^2 \, d\nu_s \notag\\
    =& - \beta \frac{d F(s)}{ds} + \beta \int_{\mathbb{R}^n} \frac{\partial V}{\partial s} d\nu_s  
    - \frac{2}{\beta} \int_{\mathbb{R}^n} \gamma\nabla
    \Big(\frac{d\nu_s}{d\nu^{\infty}_s}\Big) \cdot \nabla
    \Big(\ln\frac{d\nu_s}{d\nu^{\infty}_s}\Big)\,
    \frac{d\nu^{\infty}_s}{d\nu_s}\, d\nu_s\notag \\
    & + \frac{1}{\beta} \int_{\mathbb{R}^n}\left|\sigma^\top\nabla
    \left(\ln\frac{d\nu_s}{d\nu^{\infty}_s}\right)\right|^2 \, d\nu_s \notag\\
    =& 
    -\beta \int_{\mathbb{R}^n} \frac{\partial V}{\partial s} d\nu^{\infty}_s 
    + \beta \int_{\mathbb{R}^n} \frac{\partial V}{\partial s} d\nu_s  
    - \frac{2}{\beta} \int_{\mathbb{R}^n} \gamma\nabla
    \Big(\ln\frac{d\nu_s}{d\nu^{\infty}_s}\Big) \cdot \nabla
    \Big(\ln\frac{d\nu_s}{d\nu^{\infty}_s}\Big)\,d\nu_s\notag \\
    & + \frac{1}{\beta} \int_{\mathbb{R}^n}\left|\sigma^\top\nabla
    \left(\ln\frac{d\nu_s}{d\nu^{\infty}_s}\right)\right|^2 \, d\nu_s \notag\\
    =& 
    -\beta \int_{\mathbb{R}^n} \frac{\partial V}{\partial s} d\nu^{\infty}_s + 
    \beta \int_{\mathbb{R}^n} \frac{\partial
    V}{\partial s}\, d\nu_s - 
\frac{1}{\beta} \int_{\mathbb{R}^n}  \left|\sigma^\top\nabla
    \left( \ln\frac{d\nu_s}{d\nu^{\infty}_s}\right)\right|^2 \, d\nu_s\,, 
\end{align*}
  where we used the fact that $\mathcal{L}_s^*$ 
  is the adjoint
  operator of $\mathcal{L}_s$ in $L^2(\mathbb{R}^n, \nu_{s}^{\infty})$ to get the third equality,  
 the expression $\frac{dF(s)}{ds} = \int_{\mathbb{R}^n}
\frac{\partial V}{\partial s} d\nu^{\infty}_s$, which can be verified using
\eqref{invariant-mu} and \eqref{free-energy-f}, and finally the relation
  $\gamma=\sigma\sigma^\top$ to derive the last equality.
\end{proof}
In particular, when the potential $V$ is time-independent, 
\eqref{formula-dr} becomes 
\begin{equation}
\frac{d \mathcal{R}^{\mathrm{Bro}}(s)}{ds} = - \frac{1}{\beta}
  \int_{\mathbb{R}^n}  \Big|\sigma^\top\nabla
    \ln\frac{d\nu_s}{d\nu^{\infty}_s}\Big|^2 \, d\nu_s \le 0\,, 
    \label{entropy-production-rate-v0}
\end{equation} 
which shows that the relative entropy $\mathcal{R}^{\mathrm{Bro}}(s)$ is non-increasing.
It is interesting to note that \eqref{entropy-production-rate-v0} is true for time-inhomogeneous processes $(x(s))_{s\in[0,T]}$ as long as $V$, or equivalently $\nu^{\infty}_s$, is time-independent (i.e.\ the vector $J$, coefficients $\gamma$ and $\sigma$ in \eqref{dynamics-1-q-vector} can be time-dependent).

We are ready to state Theorem~\ref{thm-entropy-estimate-brownian}.
\begin{thm}
  Assume that $\gamma$ satisfies the assumption \eqref{elliptic-gamma}
   and that the probability measure $\nu^{\infty}_s$ satisfies the logarithmic
   Sobolev inequality with (time-dependent, positive) constant
  $\kappa(s)$ at time $s \in [0,T]$. Let $L_1 ,L_2: [0,T]\rightarrow
  \mathbb{R}$ be two functions with nonnegative values.
  \begin{enumerate}
    \item
For all $s\in[0,T]$, suppose that $\frac{\partial V}{\partial s}(\cdot,s) \in
      \mathbf{B}^\infty(\mathbb{R}^n)$ with 
      $\|\frac{\partial V}{\partial s}(\cdot,s)\|_\infty \le L_1(s)$.  We have 
    \begin{align}
      \sqrt{\mathcal{R}^{\mathrm{Bro}}(s)} \le
      \mathrm{e}^{-\frac{\gamma^-}{\beta}\int_0^s \kappa(u)\,du} \sqrt{\mathcal{R}^{\mathrm{Bro}}(0)}
       + \frac{\beta }{\sqrt{2}} \int_0^s L_1(u)\,\mathrm{e}^{-\frac{\gamma^-}{\beta}\int_u^s
       \kappa(u')\,du'}\,du \,, \quad \forall~s \in [0,T]\,.
       \label{bound-on-r-1}
    \end{align}
    \item
      For all $s \in [0,T]$, suppose that $\frac{\partial V}{\partial s}(\cdot,s)$ is Lipschitz on
      $\mathbb{R}^n$ with Lipschitz constant $L_2(s)$.  We have, for all $s \in [0,T]$,
    \begin{align}
      \sqrt{\mathcal{R}^{\mathrm{Bro}}(s)} \le
      \mathrm{e}^{-\frac{\gamma^-}{\beta}\int_0^s \kappa(u)\,du} \sqrt{\mathcal{R}^{\mathrm{Bro}}(0)}
       + \frac{\beta}{\sqrt{2}} \int_0^s\frac{L_2(u)}{\sqrt{\kappa(u)}}\mathrm{e}^{-\frac{\gamma^-}{\beta}\int_u^s
       \kappa(u')\,du'}\,du \,.
       \label{bound-on-r-2}
    \end{align}
      \end{enumerate}
      \label{thm-entropy-estimate-brownian}
\end{thm}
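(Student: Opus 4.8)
The plan is to turn the relative entropy production rate formula \eqref{formula-dr} into a differential inequality for $\sqrt{\mathcal{R}^{\mathrm{Bro}}(s)}$ and then integrate it with a Gr\"onwall-type argument. First I would bound the three terms on the right-hand side of \eqref{formula-dr} separately. The dissipative (last) term is controlled from below using ellipticity \eqref{elliptic-gamma}: since $\big|\sigma^\top\nabla g\big|^2 = (\nabla g)^\top\gamma\,\nabla g \ge \gamma^- |\nabla g|^2$ with $g = \ln\frac{d\nu_s}{d\nu^\infty_s}$, we get
\begin{align*}
  \frac{1}{\beta}\int_{\mathbb{R}^n}\Big|\sigma^\top\nabla g\Big|^2\,d\nu_s
  \ge \frac{\gamma^-}{\beta}\int_{\mathbb{R}^n}|\nabla g|^2\,d\nu_s
  = \frac{\gamma^-}{\beta}\,\mathcal{I}(\nu_s\,\|\,\nu^\infty_s)
  \ge \frac{2\gamma^-\kappa(s)}{\beta}\,\mathcal{R}^{\mathrm{Bro}}(s)\,,
\end{align*}
where the last step is the logarithmic Sobolev inequality \eqref{lsi} for $\nu^\infty_s$. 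For the first two terms, I would write their difference as $\beta\big(\int \frac{\partial V}{\partial s}\,d\nu_s - \int \frac{\partial V}{\partial s}\,d\nu^\infty_s\big)$ and bound it: in case (1), using \eqref{integration-bounded-by-tv} and the Csisz\'ar--Kullback--Pinsker inequality \eqref{csiszar-kullback-pinsker}, this is at most $\beta L_1(s)\|\nu_s - \nu^\infty_s\|_{TV} \le \beta L_1(s)\sqrt{2\mathcal{R}^{\mathrm{Bro}}(s)}$; in case (2), using \eqref{Lipschitz-w1}, \eqref{w1-bounded-by-w2}, and the Talagrand inequality \eqref{talagrand-ineq} (valid since LSI holds), it is at most $\beta L_2(s)\mathbf{W}_1(\nu_s,\nu^\infty_s) \le \beta L_2(s)\mathbf{W}_2(\nu_s,\nu^\infty_s) \le \beta L_2(s)\sqrt{\tfrac{2}{\kappa(s)}\mathcal{R}^{\mathrm{Bro}}(s)}$.

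Combining these bounds, in case (1) I obtain the differential inequality
\begin{align*}
  \frac{d\mathcal{R}^{\mathrm{Bro}}(s)}{ds}
  \le \beta L_1(s)\sqrt{2\mathcal{R}^{\mathrm{Bro}}(s)} - \frac{2\gamma^-\kappa(s)}{\beta}\mathcal{R}^{\mathrm{Bro}}(s)\,,
\end{align*}
and analogously in case (2) with $\beta L_1(s)\sqrt{2}$ replaced by $\beta L_2(s)\sqrt{2/\kappa(s)}$. The next step is to pass to $y(s) := \sqrt{\mathcal{R}^{\mathrm{Bro}}(s)}$. Formally $\frac{dy}{ds} = \frac{1}{2y}\frac{d\mathcal{R}^{\mathrm{Bro}}}{ds}$, which yields $\frac{dy}{ds} \le \frac{\beta L_1(s)}{\sqrt{2}} - \frac{\gamma^-\kappa(s)}{\beta}\,y(s)$ (case 1), a linear first-order differential inequality. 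Multiplying by the integrating factor $\exp\big(\frac{\gamma^-}{\beta}\int_0^s\kappa(u)\,du\big)$ and integrating from $0$ to $s$ gives exactly \eqref{bound-on-r-1}; the same computation with $L_2(u)/\sqrt{\kappa(u)}$ in place of $L_1(u)$ gives \eqref{bound-on-r-2}.

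**Main obstacle.** The genuinely delicate point is justifying the passage from the inequality on $\mathcal{R}^{\mathrm{Bro}}$ to the inequality on $y = \sqrt{\mathcal{R}^{\mathrm{Bro}}}$ at points where $\mathcal{R}^{\mathrm{Bro}}(s) = 0$, where $y$ need not be differentiable and division by $2y$ is illegitimate. I would handle this by working with $y_\varepsilon(s) := \sqrt{\mathcal{R}^{\mathrm{Bro}}(s) + \varepsilon}$ for $\varepsilon > 0$, which is smooth in $s$ (given the regularity of $\rho$ from Remark~\ref{rmk-regularity-of-rho-brownian}, so that $\mathcal{R}^{\mathrm{Bro}}$ is $C^1$ and \eqref{formula-dr} holds), deriving the corresponding differential inequality $\frac{dy_\varepsilon}{ds} \le \frac{\beta L_1(s)}{\sqrt{2}} - \frac{\gamma^-\kappa(s)}{\beta}\cdot\frac{\mathcal{R}^{\mathrm{Bro}}(s)}{y_\varepsilon(s)}$, bounding $\frac{\mathcal{R}^{\mathrm{Bro}}(s)}{y_\varepsilon(s)} = \frac{y_\varepsilon(s)^2 - \varepsilon}{y_\varepsilon(s)} \ge y_\varepsilon(s) - \sqrt{\varepsilon}$ (using $\sqrt{a^2-\varepsilon}\le a$), applying Gr\"onwall to $y_\varepsilon$, and finally letting $\varepsilon \to 0$. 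A secondary point worth a remark is that the bounds are only meaningful when the right-hand side of \eqref{formula-dr} is finite and $\mathcal{R}^{\mathrm{Bro}}(0) < \infty$; under the standing smoothness and ellipticity assumptions, together with the logarithmic Sobolev hypothesis, the Fisher information term is finite along the evolution, so the formal manipulations are legitimate. Everything else is routine bookkeeping with the inequalities already collected in the introduction.
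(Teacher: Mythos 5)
Your proposal is correct and follows essentially the same route as the paper: the entropy production formula \eqref{formula-dr}, the ellipticity bound \eqref{elliptic-gamma} combined with the logarithmic Sobolev inequality for the dissipative term, Csisz\'ar--Kullback--Pinsker for case (1) and the $\mathbf{W}_1\le\mathbf{W}_2$ plus Talagrand chain for case (2), followed by passage to $\sqrt{\mathcal{R}^{\mathrm{Bro}}}$ and Gr\"onwall. The only difference is your careful regularization $y_\varepsilon=\sqrt{\mathcal{R}^{\mathrm{Bro}}+\varepsilon}$ to justify differentiating the square root at zeros of $\mathcal{R}^{\mathrm{Bro}}$, a point the paper passes over formally; this is a welcome refinement but not a different argument.
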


\begin{proof}
  \begin{enumerate}
    \item
  Consider the terms in \eqref{formula-dr} of Proposition~\ref{prop-production-rate-of-entropy}.
      Since $\|\frac{\partial V}{\partial s}(\cdot , s)\|_\infty \le L_1(s)$, 
      applying \eqref{integration-bounded-by-tv} and Csisz\'ar-Kullback-Pinsker inequality \eqref{csiszar-kullback-pinsker}, we have 
  \begin{align}
    \begin{split}
      & \Big|\int_{\mathbb{R}^n} \frac{\partial V}{\partial s} d\nu^{\infty}_s - 
    \int_{\mathbb{R}^n} \frac{\partial
      V}{\partial s}\, d\nu_s\Big| \\
      \le & L_1(s)\,\|\nu^{\infty}_s-\nu_s\|_{TV} \\
      \le &
      L_1(s) \sqrt{2D_{KL}(\nu_s\,\|\,\nu^{\infty}_s)} \\
      =& L_1(s)\sqrt{2\mathcal{R}^{\mathrm{Bro}}(s)}\,.
    \end{split}
    \label{term1-bounded-by-sqrt-of-r}
  \end{align}
    Substituting \eqref{term1-bounded-by-sqrt-of-r} into \eqref{formula-dr},
      using the condition \eqref{elliptic-gamma}, the Fisher
      information~\eqref{fisher-info}, and the logarithmic Sobolev inequality \eqref{lsi},  
    we compute
    \begin{equation}
      \begin{aligned}
	\frac{d \mathcal{R}^{\mathrm{Bro}}(s)}{ds} 
	=\, & -\beta \int_{\mathbb{R}^n} \frac{\partial V}{\partial s} d\nu^{\infty}_s + \beta \int_{\mathbb{R}^n} \frac{\partial V}{\partial s}\, d\nu_s - 
\frac{1}{\beta} \int_{\mathbb{R}^n}  \left|\sigma^\top\nabla
	\left(\ln\frac{d\nu_s}{d\nu^{\infty}_s}\right)\right|^2 \, d\nu_s\\
	\le&\, \beta L_1(s) \sqrt{2\mathcal{R}^{\mathrm{Bro}}(s)} - 
	\frac{\gamma^-}{\beta} \mathcal{I}(\nu_s\,\|\,\nu^{\infty}_s)\\
	\le&\, \beta L_1(s) \sqrt{2\mathcal{R}^{\mathrm{Bro}}(s)} - \frac{2\kappa(s)\gamma^-}{\beta}
	\mathcal{R}^{\mathrm{Bro}}(s)\,,
    \end{aligned}
      \label{estimate1-in-prop1}
    \end{equation}
      which implies
      \begin{equation*}
	\frac{d}{ds} \sqrt{\mathcal{R}^{\mathrm{Bro}}(s)}
	\le - \frac{\kappa(s)\gamma^-}{\beta}
	\sqrt{\mathcal{R}^{\mathrm{Bro}}(s)} + \frac{\beta L_1(s)}{\sqrt{2}} \,.
      \end{equation*}
     The upper bound \eqref{bound-on-r-1} is implied by Gronwall's inequality. 
    \item
      Since $\frac{\partial V}{\partial s}$ is Lipschitz, 
      applying the inequalities \eqref{w1-bounded-by-w2} and \eqref{Lipschitz-w1}, we
      find
  \begin{align}
    & \Big|\int_{\mathbb{R}^n} \frac{\partial V}{\partial s} d\nu^{\infty}_s - 
    \int_{\mathbb{R}^n} \frac{\partial
    V}{\partial s}\, d\nu_s\Big| 
    \le  L_2(s)\,\mathbf{W}_1(\nu_s, \nu^{\infty}_s) 
    \le L_2(s) \mathbf{W}_2(\nu_s, \nu^{\infty}_s)\,.
    \label{estimate2-in-prop1}
  \end{align}
      Since $\nu^{\infty}_s$ satisfies the logarithmic Sobolev inequality with the constant $\kappa(s)$, the
      Talagrand inequality~\eqref{talagrand-ineq} holds, i.e.\
      \begin{align}
	\mathbf{W}_2(\nu_s, \nu^{\infty}_s) \le \sqrt{\frac{2}{\kappa(s)}
	D_{KL}(\nu_s\,\|\,\nu^{\infty}_s)}
	= \sqrt{\frac{2\mathcal{R}^{\mathrm{Bro}}(s)}{\kappa(s)}}\,.
    \label{estimate3-in-prop1}
      \end{align}
Applying~\eqref{estimate2-in-prop1}--\eqref{estimate3-in-prop1},
      a similar argument as \eqref{estimate1-in-prop1} shows that
    \begin{align*}
      \frac{d \mathcal{R}^{\mathrm{Bro}}(s)}{ds} \le \beta L_2(s)
      \sqrt{\frac{2\mathcal{R}^{\mathrm{Bro}}(s)}{\kappa(s)}} -
      \frac{2\kappa(s)\gamma^-}{\beta} \mathcal{R}^{\mathrm{Bro}}(s)\,.
    \end{align*}
     The upper bound \eqref{bound-on-r-2} again follows from Gronwall's inequality. 
  \end{enumerate}
\end{proof}

Note that the exponential decay in the time-homogeneous case is recovered
since $L_1(s)=L_2(s)=0$ and $\kappa(s)$ is time-independent.
In general, since the probability measure $\nu_s^\infty$ \eqref{invariant-mu}
  depends on $\beta$, the constant $\kappa(s)$ in
  Theorem~\ref{thm-entropy-estimate-brownian} depends on $\beta$ as well.
  In particular, when $\nabla^2 V(x, s) \ge \kappa_0(s) I_n$ for some
  $\kappa_0(s) > 0$ (Bakry-Emery criterion) which is independent of $\beta$, for all $(x,s) \in
  \mathbb{R}^n\times [0,T]$, then $\nu_s^\infty$ satisfies the logarithmic Sobolev inequality with the constant
  $\kappa(s)=\beta\kappa_0(s)$ (see Ref.~\onlinecite[Remark 21.4]{villani2008optimal}).
  The corresponding estimates in this case follow directly by substituting the expression $\kappa(s)=\beta\kappa_0(s)$ into \eqref{bound-on-r-1} and \eqref{bound-on-r-2}. 

We conclude this section with a remark on possible extensions of Theorem~\ref{thm-entropy-estimate-brownian}.
\begin{remark}
  Obviously, when the process is defined on $[0, +\infty)$, i.e.\ $T=+\infty$, the conclusions of
  Theorem~\ref{thm-entropy-estimate-brownian} are true on $[0, +\infty)$ as well.

In the proof above, the key step to derive upper bounds of $\mathcal{R}^{\mathrm{Bro}}(s)$ 
 is to estimate the difference $|\int_{\mathbb{R}^n} \frac{\partial V}{\partial s} d\nu^{\infty}_s - 
    \int_{\mathbb{R}^n} \frac{\partial V}{\partial s}\, d\nu_s|$. Instead of 
    assuming boundedness or Lipschitz condition for $\frac{\partial
    V}{\partial s}(\cdot, s)$, one can also assume that $\frac{\partial V}{\partial
    s}(\cdot, s)$ is
    $\alpha$-H{\"o}lder continuous for some $\alpha \in (0,1]$, i.e.\ 
  $\left|\frac{\partial V}{\partial s}(x,s) - \frac{\partial V}{\partial
  s}(y,s)\right| \le L(s)|x-y|^\alpha$ for all $x,y\in \mathbb{R}^n$ and $s
  \in [0,T]$, where $L(s) \ge 0$. 
    In this case, H\"{o}lder's inequality and Talagrand inequality (see \eqref{estimate3-in-prop1}) imply 
    \begin{equation*}
\left|\int_{\mathbb{R}^n} \frac{\partial V}{\partial s} d\nu^{\infty}_s - 
    \int_{\mathbb{R}^n} \frac{\partial V}{\partial s}\, d\nu_s\right| \le 
      L(s) \mathbf{W}_2(\nu_s, \nu_s^\infty)^{\alpha}
	\le L(s) \left(\frac{2\mathcal{R}^{\mathrm{Bro}}(s)}{\kappa(s)}\right)^{\frac{\alpha}{2}}\,,
    \end{equation*}
    from which we obtain
    \begin{align*}
      \frac{d \mathcal{R}^{\mathrm{Bro}}(s)}{ds} \le \beta L(s)
    \left(\frac{2\mathcal{R}^{\mathrm{Bro}}(s)}{\kappa(s)}\right)^{\frac{\alpha}{2}}
    - \frac{2\kappa(s)\gamma^-}{\beta} \mathcal{R}^{\mathrm{Bro}}(s)\,.  \end{align*}
  This again allows us to derive upper bounds of $\mathcal{R}^{\mathrm{Bro}}(s)$ using a Gronwall type inequality. We omit the details for simplicity.
  \label{rmk-holder-condition} 
\end{remark}
\subsection{Connection between time reversal of reverse process and optimally controlled forward process}
\label{subsec-crooks-control}
First of all, let us introduce a stochastic optimal control problem (see
Ref.~\onlinecite[Example III.8.2]{fleming2006}) that is related to Jarzynski's
equality. We refer to Appendix~\ref{app-subsec-control-jarzynski-brownian} for further motivations. Consider the stochastic optimal control problem 
\begin{align}
  U(x,t) = \inf_{(u_s)_{s\in [t,T]}} \mathbf{E}\left(W^u_{(t,T)} + \frac{1}{4} \int_t^T
  |u_s|^2\,ds\,\middle|\,x^u(t) = x\right)\,, \quad (x,t) \in \mathbb{R}^n\times [0, T]\,,
  \label{opt-control-problem-U}
\end{align}
of the controlled process 
\begin{align}
  \begin{split}
  d x^u(s)  =&  \Big(J - \gamma \nabla V + \frac{1}{\beta} \nabla \cdot \gamma\Big)(x^u(s), s)\,ds + \sqrt{2\beta^{-1}}
  \sigma(x^u(s),s)\,dw(s)\\
  & + \sigma(x^u(s), s)\, u_s\, ds \,,
  \end{split}
\label{dynamics-1-u}
\end{align}
 where the minimum is over all adapted processes $(u_s)_{s\in[t,T]}\in
 C([t,T],\mathbb{R}^m)$ ($u_s$ can depend on the past history $(x^u(r))_{0 \le
 r\le s}$ for all $s \in [t,T]$) such
 that \eqref{dynamics-1-u} has a strong solution, and the work functional is  
 \begin{align}
  W^u_{(t,T)} = \int_{t}^{T} \frac{\partial V}{\partial s}(x^u(s), s)\,ds\,,
   \quad ~t \in [0, T]\,.
  \label{work-w-u}
\end{align}

  The function $U: \mathbb{R}^n\times
 [0,T]\rightarrow \mathbb{R}$ is called the value function of the optimal
 control problem \eqref{opt-control-problem-U}--\eqref{dynamics-1-u}.
Under the condition~\eqref{elliptic-gamma} and mild technical conditions on coefficients, 
$U$ is the classical solution to the Hamilton-Jacobi-Bellman (HJB) equation~\eqref{hjb-eqn} in Appendix~\ref{app-subsec-control-jarzynski-brownian}. The optimal control (at which the infimum in \eqref{opt-control-problem-U} is achieved) exists and is given in feedback form as (see Ref.~\onlinecite[Section III.8]{fleming2006}) 
\begin{align}
   u_s^* = -2\sigma^\top(x,s) \nabla U(x,s)\,, \quad s \in [0,T]\,,
  \label{us-opt-change-of-measure}
\end{align}
when the system is at the state $x\in \mathbb{R}^n$ at time $s\in [0,T]$.
We also introduce the probability measure $\nu_0^*$ on $\mathbb{R}^n$, given by
\begin{align}
  \frac{d\nu_0^*}{dx}(x) 
  = \frac{\mathrm{e}^{-\beta V(x,0)}}{Z(T)} 
  \mathbf{E} \left( \mathrm{e}^{-\beta W}\,\middle|\,x(0)=x\right)\,, \quad x \in \mathbb{R}^n\,,
  \label{opt-mu0}
\end{align}
where (cf.\ \eqref{work-w-u})
  \begin{align}
  W = \int_{0}^{T} \frac{\partial V}{\partial s}(x(s), s)\,ds\,,
  \label{work-w}
\end{align}
$Z(T)$ is given in \eqref{invariant-mu},
and $\mathbf{E}(\cdot\,|\,x(0)=x)$ denotes the
path ensemble average of the (uncontrolled) dynamics
\eqref{dynamics-1-q-vector} starting from $x$ at $s=0$.
Note that Jarzynski's equality \eqref{jarzynski} in Appendix~\ref{app-subsec-control-jarzynski-brownian} implies that the integration of
$\nu_0^*$ over $\mathbb{R}^n$ equals to one. Moreover, the optimal
control~$u^*$ \eqref{us-opt-change-of-measure}
and the probability measure $\nu_0^*$ \eqref{opt-mu0} give the optimal Monte
Carlo estimators for free energy calculations based on Jarzynski's equality (see
Appendix~\ref{app-subsec-control-jarzynski-brownian} and Ref.~\onlinecite{non-equilibrium-2018}). 

The following result connects the time reversal of the reverse process \eqref{dynamics-1-q-vector-backward}
  and the controlled process~\eqref{dynamics-1-u} under the optimal control~\eqref{us-opt-change-of-measure}. 
\begin{thm}
  Let $\nu^{\infty}_T$ and $\nu^{*}_0$ be the probability distributions in
  \eqref{invariant-mu} and \eqref{opt-mu0}, respectively.
  Consider the reverse process $(x^R(s))_{s\in[0,T]}$ \eqref{dynamics-1-q-vector-backward}
  starting from the distribution $\nu^{\infty}_T$ at $s=0$. 
  Assume the probability distribution of $x^R(s)\in \mathbb{R}^n$ has a positive 
  $C^\infty$-smooth density with respect to Lebesgue measure for all $s \in [0,T]$.
  Define the time reversal $(x^{R,-}(s))_{s\in[0,T]}$, where $x^{R,-}(s)=x^R(T-s)$ for $s\in
  [0,T]$. Let $(x^{u^*}(s))_{s\in[0,T]}$ be the controlled process \eqref{dynamics-1-u} under the optimal control $u^*$ \eqref{us-opt-change-of-measure} starting from the initial distribution $\nu_0^*$.
  Then, the processes $(x^{R,-}(s))_{s\in[0,T]}$ and $(x^{u^*}(s))_{s\in[0,T]}$ have the same law on the path space $C([0,T], \mathbb{R}^n)$.
  \label{thm-connection-brownian}
\end{thm}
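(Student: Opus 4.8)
The plan is to compute explicitly the time-marginal law of the reverse process, apply the Haussmann--Pardoux formula for the SDE solved by a time-reversed diffusion, and recognize the resulting dynamics as the optimally controlled forward process via the exponential (Cole--Hopf) change of variables that linearizes the HJB equation~\eqref{hjb-eqn}. Concretely, I would first introduce the Feynman--Kac function
\[\psi(x,t) = \mathbf{E}\Bigl(\mathrm{e}^{-\beta\int_t^T \frac{\partial V}{\partial s}(x(r),r)\,dr}\,\Big|\,x(t)=x\Bigr),\]
the expectation taken over the uncontrolled dynamics~\eqref{dynamics-1-q-vector}, so that $\psi(x,0)=\mathbf{E}(\mathrm{e}^{-\beta W}\mid x(0)=x)$ with $W$ as in~\eqref{work-w} and $\psi$ solves the linear backward equation $\partial_t\psi + \mathcal{L}_t\psi - \beta\,\frac{\partial V}{\partial s}\,\psi = 0$ with $\psi(\cdot,T)\equiv 1$. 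Using the second identity in~\eqref{lg-llogg} one checks that $U=-\beta^{-1}\ln\psi$ solves~\eqref{hjb-eqn} with $U(\cdot,T)\equiv 0$; hence the optimal feedback~\eqref{us-opt-change-of-measure} satisfies $\sigma(\cdot,s)\,u^*_s = -2\gamma\nabla U = \tfrac{2}{\beta}\gamma\nabla\ln\psi$, the drift of the optimally controlled process~\eqref{dynamics-1-u} equals $J-\gamma\nabla V+\tfrac1\beta\nabla\cdot\gamma+\tfrac2\beta\gamma\nabla\ln\psi$ (at $(\cdot,s)$), and by~\eqref{opt-mu0} the measure $\nu^*_0$ has density $\tfrac1{Z(T)}\mathrm{e}^{-\beta V(\cdot,0)}\psi(\cdot,0)$.

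Next I would identify the time-marginals of the reverse process: I claim that the density of $x^R(s)$ started from $\nu^\infty_T$ is
\[\widehat\rho(x,s) = \tfrac1{Z(T)}\,\mathrm{e}^{-\beta V(x,T-s)}\,\psi(x,T-s)\,,\]
which is the representation indicated in~\eqref{solution-of-rho-reverse} and Remark~\ref{rmk-regularity-of-rho-brownian}. To prove this one checks that $\widehat\rho$ solves the Fokker--Planck equation of~\eqref{dynamics-1-q-vector-backward}: by Lemma~\ref{lemma-generator-property} the generator of~\eqref{dynamics-1-q-vector-backward} at time $s$ is $\mathcal{L}^R_s=\mathcal{L}^*_{T-s}$, and a change of reference measure to $\nu^\infty_{T-s}$ yields $(\mathcal{L}^R_s)^\dagger g = \mathrm{e}^{-\beta V(\cdot,T-s)}\mathcal{L}_{T-s}\bigl(\mathrm{e}^{\beta V(\cdot,T-s)}g\bigr)$; substituting $g=\widehat\rho$ turns the right-hand side into $\tfrac1{Z(T)}\mathrm{e}^{-\beta V(\cdot,T-s)}\mathcal{L}_{T-s}\psi(\cdot,T-s)$, while the chain rule in $s$ together with the backward equation for $\psi$ gives the same expression for $\partial_s\widehat\rho$; the initial condition $\widehat\rho(\cdot,0)=\tfrac1{Z(T)}\mathrm{e}^{-\beta V(\cdot,T)}$ is $\nu^\infty_T$ by~\eqref{invariant-mu}. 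Uniqueness of the positive smooth solution of the Fokker--Planck equation then identifies $\widehat\rho$, and in particular $x^{R,-}(0)=x^R(T)$ has density $\tfrac1{Z(T)}\mathrm{e}^{-\beta V(\cdot,0)}\psi(\cdot,0)$, i.e.\ law $\nu^*_0$.

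Finally I would invoke the time-reversal theorem of Ref.~\onlinecite{haussmann1986} for~\eqref{dynamics-1-q-vector-backward}: under the standing smoothness and strict positivity of the marginal density, $x^{R,-}(s)=x^R(T-s)$ solves an SDE with diffusion coefficient $\sqrt{2\beta^{-1}}\sigma(\cdot,s)$ and drift $-\tilde b(\cdot,T-s)+\tfrac{2}{\beta}\,\widehat\rho(\cdot,T-s)^{-1}\,\nabla\cdot\bigl(\gamma(\cdot,s)\,\widehat\rho(\cdot,T-s)\bigr)$, where $\tilde b$ denotes the drift of~\eqref{dynamics-1-q-vector-backward}. Here $-\tilde b(\cdot,T-s)=J+\gamma\nabla V-\tfrac1\beta\nabla\cdot\gamma$ and $\tfrac{2}{\beta}\widehat\rho^{-1}\nabla\cdot(\gamma\widehat\rho)=\tfrac2\beta\nabla\cdot\gamma+\tfrac2\beta\gamma\nabla\ln\widehat\rho(\cdot,T-s)$; inserting $\nabla\ln\widehat\rho(\cdot,T-s)=-\beta\nabla V(\cdot,s)+\nabla\ln\psi(\cdot,s)$ from the previous step, the drift collapses to $J-\gamma\nabla V+\tfrac1\beta\nabla\cdot\gamma+\tfrac2\beta\gamma\nabla\ln\psi$, which is exactly the drift of the optimally controlled process found above. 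Thus $(x^{R,-}(s))_{s\in[0,T]}$ and $(x^{u^*}(s))_{s\in[0,T]}$ satisfy the same SDE and, as shown above, start from the same distribution $\nu^*_0$; since the coefficients are locally Lipschitz and $x^{R,-}$ is a non-exploding solution, weak uniqueness gives equality of the two laws on $C([0,T],\mathbb{R}^n)$.

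I expect the main obstacle to be the rigorous application of the Haussmann--Pardoux formula, namely verifying its integrability hypotheses on $\tilde b$ and on $\nabla\cdot(\gamma\widehat\rho)/\widehat\rho$ over compact space--time sets; this is precisely where the assumed $C^\infty$-smoothness and strict positivity of $\widehat\rho$ (Remark~\ref{rmk-regularity-of-rho-brownian}) and the uniform ellipticity~\eqref{elliptic-gamma} enter. A secondary, more bookkeeping issue is tracking the chain rule in the time variable: the $T-s$ shift generates exactly the sign changes needed to reconcile the sign flip in front of $J$ in~\eqref{dynamics-1-q-vector-backward} with the sign in the optimal control~\eqref{us-opt-change-of-measure}.
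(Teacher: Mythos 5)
Your proposal is correct and follows essentially the same route as the paper's proof: identify the marginal density of the reverse process via Feynman--Kac, apply the Haussmann--Pardoux time-reversal SDE, and match the resulting drift with the optimally controlled drift through the logarithmic transformation $U=-\beta^{-1}\ln\psi$. The only (cosmetic) difference is directional: the paper derives the PDE for $g=\mathrm{e}^{\beta V}\rho^R(\cdot,T-\cdot)Z(T)$ from the Fokker--Planck equation and then invokes Feynman--Kac, whereas you posit the Feynman--Kac function first and verify the candidate density solves the Fokker--Planck equation, which additionally requires the uniqueness of its positive smooth solution that you correctly flag.
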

\begin{proof}
  Let us denote by $\nu_s^R$ the probability distribution of $x^R(s)\in \mathbb{R}^n$ at time
  $s$. Let $\rho^R(\cdot,s)$ be the probability density of $\nu_s^R$ with
  respect to Lebesgue measure. Similar to \eqref{fokker-planck}, the density $\rho^R$ satisfies the Fokker-Planck equation
 \begin{align}
   \frac{\partial \rho^R}{\partial s} = \mathrm{e}^{-\beta V(\cdot, T-s)}
   (\mathcal{L}^R_s)^* (\mathrm{e}^{\beta V(\cdot, T-s)}\rho^R)\,, \quad
   s\in [0,T]\,,
  \label{fokker-planck-r}
\end{align}
  where $(\mathcal{L}^R_s)^*$ denotes the adjoint operator of
  $\mathcal{L}^R_s$ \eqref{l-r-s} with respect to the probability measure~$\nu_{T-s}^\infty$. 

  In the following, we show that both $(x^{R,-}(s))_{s\in[0,T]}$ and
  $(x^{u^*}(s))_{s\in[0,T]}$ satisfy the same SDE with the same initial probability distribution. 
  \begin{enumerate}
    \item
      First, we consider the SDEs of $(x^{R,-}(s))_{s\in[0,T]}$ and
      $(x^{u^*}(s))_{s\in[0,T]}$.
      For the optimally controlled process $(x^{u^*}(s))_{s\in[0,T]}$, combining
      \eqref{dynamics-1-u} and \eqref{us-opt-change-of-measure}, using
      $\gamma=\sigma\sigma^\top$, we find
\begin{align}
  \begin{split}
    d x^{u^*}(s)  =&  \Big(J - \gamma \nabla V + \frac{1}{\beta} \nabla \cdot
    \gamma\Big)(x^{u^*}(s), s)\,ds + \sqrt{2\beta^{-1}} \sigma(x^{u^*}(s),s)\,dw(s)\\
    & - 2(\gamma\, \nabla U)(x^{u^*}(s), s)\, ds \,, \quad s \in [0,T]\,,
  \end{split}
\label{dynamics-1-optimal}
\end{align}
  where $U$ is the value function in \eqref{opt-control-problem-U}. 

      For the time reversal $(x^{R,-}(s))_{s\in[0,T]}$, let us recall the reverse process \eqref{dynamics-1-q-vector-backward}, which we rewrite as 
\begin{align}
    d x^R(s) =  \widehat{b}(x^R(s),s)\,ds + \sqrt{2\beta^{-1}}
    \widehat{\sigma}(x^R(s),s)\,dw(s)\,,
    \label{reverse-xs-rewrite}
\end{align}
  where, for all $(x,s) \in \mathbb{R}^n \times [0,T]$, 
  \begin{equation}
    \begin{aligned}
      & \widehat{b}(x,s) = \Big(-J - \gamma \nabla V + \frac{1}{\beta} \nabla \cdot
    \gamma\Big)(x,T-s)\,, \\
      & \widehat{\sigma}(x,s) = \sigma(x,T-s)\,,\quad \widehat{\gamma}(x,s) =
      (\widehat{\sigma}\widehat{\sigma}^\top)(x,s) = \gamma(x,T-s)\,.
      \end{aligned}
    \label{b-sigma-rewrite}
  \end{equation}

  Since the density $\rho^R$ is both $C^\infty$-smooth and positive, it is
      shown in Ref.~\onlinecite{haussmann1986} that
      $(x^{R,-}(s))_{s\in[0,T]}=(x^R(T-s))_{s\in[0,T]}$ is again a diffusion
      process and satisfies the SDE 
\begin{align}
  dx^{R,-}(s) = \widehat{b}^{\,-}(x^{R,-}(s),T-s)\,ds + \sqrt{2\beta^{-1}}
  \widehat{\sigma}(x^{R,-}(s),T-s)\,dw(s)\,, \quad s\in [0,T]\,,
    \label{reversal-of-reverse-1}
\end{align}
      with the initial distribution $\nu^R_T$ (i.e.\ the distribution of
      $x^R(T)\in \mathbb{R}^n$), where 
the drift term $\widehat{b}^{\,-}$ is 
\begin{equation}
    \widehat{b}^{\,-}(x, s) = \Big(-\widehat{b} + \frac{2}{\beta \rho^R}
    \nabla\cdot(\rho^R~\widehat{\gamma})\Big)(x,s)\,,  \quad ~(x,s) \in \mathbb{R}^n\times
    [0,T]\,.
    \label{b-in-reversal}
\end{equation}
  Substituting \eqref{b-sigma-rewrite} in \eqref{b-in-reversal}, we can derive
  \begin{equation}
    \begin{aligned}
      \widehat{b}^{\,-}(x, T-s) =& \Big(-\widehat{b} + \frac{2}{\beta \rho^R}
    \nabla\cdot(\rho^R~\widehat{\gamma})\Big)(x,T-s)\\
      = & \Big(J + \gamma \nabla V - \frac{1}{\beta} \nabla \cdot
      \gamma + \frac{2}{\beta \rho^{R,-}} \nabla\cdot(\rho^{R,-}\gamma) \Big)(x,s) \\
      = & \Big(J + \gamma \nabla V 
      - \frac{1}{\beta} \nabla \cdot \gamma 
      + \frac{2}{\beta} \nabla \cdot \gamma + \frac{2}{\beta \rho^{R,-}}
      \gamma \nabla\rho^{R,-} \Big)(x,s) \\
      = & \Big(J + \gamma \nabla V + \frac{1}{\beta} \nabla \cdot
      \gamma + \frac{2}{\beta}\gamma \nabla \ln\rho^{R,-} \Big)(x,s) \\
      = & \Big(J - \gamma \nabla V + \frac{1}{\beta} \nabla \cdot
      \gamma + \frac{2}{\beta} \gamma\nabla \ln(\mathrm{e}^{\beta V}\rho^{R,-}) \Big)(x,s)\,,
    \end{aligned}
    \label{b-in-reversal-explicit}
  \end{equation}
  where we have used the notation $\rho^{R,-}(\cdot,s) = \rho^R(\cdot,T-s)$,
  for all $s \in [0,T]$.
Using \eqref{b-in-reversal-explicit} and \eqref{b-sigma-rewrite}, 
we can write the SDE \eqref{reversal-of-reverse-1} more explicitly as 
\begin{align}
  \begin{split}
    dx^{R,-}(s) =& \widehat{b}^{\,-}(x^{R,-}(s),T-s)\,ds + \sqrt{2\beta^{-1}}
  \widehat{\sigma}(x^{R,-}(s),T-s)\,dw(s) \\
    =& \Big(J - \gamma \nabla V + \frac{1}{\beta} \nabla \cdot
      \gamma \Big)(x^{R,-}(s) ,s)\,ds +
    \sqrt{2\beta^{-1}} \sigma\big(x^{R,-}(s),s\big)\,dw(s) \\
    & + \frac{2}{\beta} \Big(\gamma\nabla \ln(\mathrm{e}^{\beta V}\rho^{R,-})\Big) (x^{R,-}(s) ,s)\,ds\,.
  \end{split}
  \label{reversal-of-reverse-2}
\end{align}

To show that the two SDEs \eqref{dynamics-1-optimal} and \eqref{reversal-of-reverse-2} are the same, 
we define 
\begin{equation}
g(x,s) =\mathrm{e}^{\beta V(x,s)} \rho^R(x, T-s)Z(T),\, \quad (x,s) \in
  \mathbb{R}^n\times [0,T]\,.
  \label{def-g}
\end{equation}
  Since the initial distribution of $x^R(s)\in \mathbb{R}^n$ is $\nu_0^R=\nu^{\infty}_T$, we have $\rho^R(x,0) =
  \frac{1}{Z(T)}\mathrm{e}^{-\beta V(x,T)}$, which implies $g(x,T) =1$. Using
  \eqref{fokker-planck-r} and the identity
  $\mathcal{L}_s^*=\mathcal{L}_{T-s}^R$ in
  Lemma~\ref{lemma-generator-property}, we can derive
  \begin{equation}
    \begin{aligned}
      \frac{\partial g}{\partial s} =& - Z(T) \mathrm{e}^{\beta V(\cdot,s)} \frac{\partial \rho^R}{\partial
    s}(\cdot,T-s) + \beta \frac{\partial
      V}{\partial s} g \\
      =& -Z(T) (\mathcal{L}^R_{T-s})^* \big(\mathrm{e}^{\beta V(\cdot, s)}\rho^R(\cdot, T-s)\big) + \beta \frac{\partial
      V}{\partial s} g \\
      =& -\mathcal{L}_s g  + \beta \frac{\partial
      V}{\partial s} g \,.
    \end{aligned}
    \label{pde-for-g}
  \end{equation}
  In fact, the derivations above show that $g$ and the value function $U$ are
  related by a logarithmic transformation
  (we refer to \eqref{g-pde}--\eqref{hjb-eqn} in
  Appendix~\ref{app-subsec-control-jarzynski-brownian} for details), i.e.\ 
  \begin{equation}
  U=-\beta^{-1} \ln g\,.
    \label{log-transformation-u-and-g}
  \end{equation}
  
Combining \eqref{def-g} and \eqref{log-transformation-u-and-g}, we see that
both SDEs \eqref{dynamics-1-optimal} and \eqref{reversal-of-reverse-2} are the same.
    \item
      Next, we show that the initial distribution of $x^{R,-}(s)\in
      \mathbb{R}^n$ is $\nu^{*}_0$ in \eqref{opt-mu0}. 
  Since $(x^{R,-}(s))_{s\in[0,T]}=(x^R(T-s))_{s\in[0,T]}$, it is enough to
  verify $\nu_T^R=\nu^{*}_0$ or, equivalently, the density $\rho^R(x, T)$ coincides with the one in \eqref{opt-mu0}.
  In fact, applying Feynman-Kac formula, from \eqref{pde-for-g} we obtain 
\begin{align}
  g(x,s) = \mathbf{E}\Big[\exp\Big(-\beta\int_s^T \frac{\partial V}{\partial s}(x(t), t) dt\Big)\, \Big|\, x(s) = x\Big]\,,
  \quad (x,s) \in \mathbb{R}^n \times [0,T]\,,
  \label{g-x-t}
\end{align}
where $\mathbf{E}(\cdot\,|\,x(s)=x)$ denotes the path ensemble average of the (uncontrolled) dynamics
\eqref{dynamics-1-q-vector} starting from $x$ at time $s$.
Using \eqref{def-g} and \eqref{g-x-t}, we obtain 
\begin{equation}
  \rho^R(x,s)= \frac{\mathrm{e}^{-\beta V(x,T-s)}}{Z(T)} 
  \mathbf{E}\Big[\exp\Big(-\beta\int_{T-s}^T \frac{\partial V}{\partial
  s}(x(t), t) dt\Big)\, \Big|\, x(T-s) = x\Big]\,,
    \label{solution-of-rho-reverse}
\end{equation}
which implies (using \eqref{opt-mu0} and \eqref{work-w}) that 
\begin{equation}
d\nu_T^R = \rho^R(x,T)\,dx=\frac{1}{Z(T)}\,\mathrm{e}^{-\beta V(x,0)} g(x,0)\,dx
=d\nu^{*}_0. 
    \label{calculation-thm-connection-brownian-1}
\end{equation}
  This shows that the initial distribution of $(x^{R,-}(s))_{s\in[0,T]}$ is indeed $\nu^{*}_0$.  
\end{enumerate}
  To summarize, both $(x^{R,-}(s))_{s\in[0,T]}$ and $(x^{u^*}(s))_{s\in[0,T]}$ satisfy the same SDE with the same
  initial distribution $\nu^{*}_0$. Therefore, they have the same law on the path space.
\end{proof}

\section{Time-inhomogeneous Langevin dynamics}
\label{sec-langevin}
In this section, we study time-inhomogeneous (underdamped) Langevin dynamics in phase space. 
The analysis is similar to that in Section~\ref{sec-overdamped} for Brownian dynamics.
First, we introduce Langevin dynamics and useful notation in Section~\ref{subsec-forward-backward-langevin}.
Then, in Section~\ref{subsec-entropy-langevin} we study the relative entropy estimate
for time-inhomogeneous Langevin dynamics. Finally,  
in Section~\ref{subsec-crooks-control-langevin} we establish the connection between the time
reversal of reverse Langevin process and certain optimally controlled forward Langevin process.

\subsection{Forward and reverse processes}
\label{subsec-forward-backward-langevin}

First, we discuss the forward Langevin process which will be studied in Section~\ref{subsec-entropy-langevin}. Denote by $(q,p)$ the state of the system in phase space $\mathbb{R}^n \times \mathbb{R}^n$.
Let $\nabla_q$, $\nabla_p$ be the gradient operators with respect to 
positions and momenta components, respectively. 
Given a time-dependent $C^\infty$-smooth Hamiltonian $H: \mathbb{R}^{n} \times \mathbb{R}^{n} \times
[0, T] \rightarrow \mathbb{R}$ within time $[0,T]$, we consider the forward
Langevin dynamics (see Ref.~\onlinecite[Section 2.2.3]{tony-free-energy-compuation})
\begin{align}
  \begin{split}
    dq(s) =& \nabla_p H(q(s),p(s), s)\,ds \\
    dp(s) =& -\nabla_q H(q(s),p(s), s)\,ds - \gamma(q(s),s)
    \nabla_pH(q(s),p(s), s)\,ds + \sqrt{2\beta^{-1}} \sigma(q(s),s)\,dw(s)
  \end{split}
  \label{langevin-eqn-forward}
\end{align}
for $s\in [0,T]$, where $\sigma: \mathbb{R}^n \times [0,T]\rightarrow
\mathbb{R}^{n\times m}$ is $C^\infty$-smooth, $(w(s))_{s\in[0,T]}$ is an $m$-dimensional Brownian motion,
and $\beta>0$ is related to the (inverse) temperature of the system. We assume
that $\gamma=\sigma\sigma^\top: \mathbb{R}^n\times [0,T]\rightarrow
\mathbb{R}^{n\times n}$ and that the condition \eqref{elliptic-gamma} holds.
Note that both $\sigma$ and $\gamma$ are independent of momenta $p$.

The generator of \eqref{langevin-eqn-forward} at \textit{fixed} time $s \in [0,T]$ is given by 
\begin{align}
  \begin{split}
    \mathcal{Q}_s f =& \nabla_pH\cdot \nabla_q f - \nabla_q H \cdot \nabla_p f -
  \gamma\nabla_pH\cdot\nabla_p f + \frac{1}{\beta} \gamma:\nabla^2_p f\\
  =& \nabla_pH\cdot \nabla_q f - \nabla_q H \cdot \nabla_p f +
  \frac{\mathrm{e}^{\beta H}}{\beta} \mbox{div}_p\Big(\mathrm{e}^{-\beta H}
    \gamma \nabla_p f\Big)\,,
  \end{split}
  \label{l-s-generator-langevin-forward}
\end{align}
for a test function $f: \mathbb{R}^n \times \mathbb{R}^n \rightarrow \mathbb{R}$.
We assume that the time-homogeneous Langevin dynamics (similar to \eqref{forward-dynamics-intro-fixed-s} in the case of Brownian dynamics) with Hamiltonian $H(\cdot, \cdot, s)$, whose generator is $\mathcal{Q}_s$, is ergodic with respect to the unique
invariant measure $\pi^\infty_s$ on $\mathbb{R}^n \times \mathbb{R}^n$, given by 
\begin{align}
  d\pi^\infty_s = \frac{1}{\mathcal{Z}(s)} \mathrm{e}^{-\beta H(q,p,s)}\,dqdp\,,\quad
  \mbox{where}~\mathcal{Z}(s) =
  \int_{\mathbb{R}^n\times \mathbb{R}^n} \mathrm{e}^{-\beta H(q,p,s)}\,dqdp\,.
  \label{mu-s-langevin}
\end{align}
We refer to Refs.~\onlinecite{Mattingly2002,sachs-Leimkuhler-entropy2017} as well as 
Ref.~\onlinecite[Section 2.2.3]{tony-free-energy-compuation} for
sufficient conditions when $H$ is in the standard form
\eqref{standard-Hamiltonian} below, and to Ref.~\onlinecite{langevin-with-general-kinetic-energy} for conditions with a general separable $H$.
Let $\mathcal{Q}^*_s$ be the adjoint operator of $\mathcal{Q}_s$ in
$L^2(\mathbb{R}^n\times \mathbb{R}^n, \pi_{s}^{\infty})$ 
(i.e.\ with respect to the weighted inner product defined by $\pi_{s}^{\infty}$
\eqref{mu-s-langevin}).
Integrating by parts and using \eqref{l-s-generator-langevin-forward}, we find
\begin{equation}
  \begin{aligned}
  \mathcal{Q}^*_s f = & -\nabla_pH\cdot \nabla_q f + \nabla_q H \cdot \nabla_p f +
  \frac{\mathrm{e}^{\beta H}}{\beta} \mbox{div}_p\Big(\mathrm{e}^{-\beta H}
    \gamma \nabla_p f\Big)\,,
  \end{aligned}
  \label{q-adjoint-langevin}
\end{equation}
which differs from $\mathcal{Q}_s$ \eqref{l-s-generator-langevin-forward} by a
change of sign in the first two terms (i.e.\ a change of sign in the generator of the Hamiltonian part of
\eqref{langevin-eqn-forward}). 
Denote by $\pi_s$ the probability measure of the state $(q(s),p(s))\in \mathbb{R}^n\times \mathbb{R}^n$ 
of \eqref{langevin-eqn-forward} at time $s\in[0,T]$.
We assume that $\pi_s$ has a positive $C^\infty$-smooth probability density
$\varrho$ with respect to Lebesgue measure (see
Remark~\ref{rmk-positivity-langevin-gaussian-case} and
Example~\ref{example-gaussian-langevin} below), such that, for $s \in [0,T]$,
\begin{align}
  d\pi_s = \varrho(q,p,s)\,dqdp, \hspace{1cm} \mbox{where}~~ \int_{\mathbb{R}^n\times
  \mathbb{R}^n} \varrho(q,p,s)\,dqdp=1\,.
  \label{pi-rho-density}
\end{align}
 In this case, $\varrho$ satisfies the Fokker-Planck equation (cf. \eqref{fokker-planck})
\begin{align}
  \frac{\partial \varrho}{\partial s} = \mathrm{e}^{\beta H}\mathcal{Q}^*_s
  \big(\mathrm{e}^{-\beta H} \varrho\big) \,,\quad s \in [0,T]
  \label{fokker-planck-langevin}
\end{align}
in a classical sense. The free energy of the system is 
\begin{align}
  \mathcal{F}(s) = -\beta^{-1}\ln \mathcal{Z}(s)\,,\quad  s\in[0,T]\,.
  \label{free-energy-f-langevin}
\end{align}

Note that \eqref{langevin-eqn-forward} recovers the standard time-homogeneous Langevin dynamics
\begin{align}
  \begin{split}
    dq(s) =& M^{-1}p(s)\,ds \\
    dp(s) =& -\nabla_q V(q(s))\,ds - \gamma(q(s))
    M^{-1}p(s)\,ds + \sqrt{2\beta^{-1}} \sigma(q(s))\,dw(s)\,,
  \end{split}
  \label{langevin-eqn-forward-case-1}
\end{align}
when both $\sigma$ and $\gamma$ are independent of $s$, and the Hamiltonian is 
    \begin{align}
      H(q,p) = V(q) + \frac{p^\top M^{-1}p}{2}, \quad ~(q,p) \in \mathbb{R}^n\times \mathbb{R}^n\,,
      \label{standard-Hamiltonian}
    \end{align} 
      where $V:\mathbb{R}^n \rightarrow
    \mathbb{R}$ is a $C^\infty$-smooth potential function, and $M\in
    \mathbb{R}^{n\times n}$ is a constant symmetric positive definite
    matrix (see Refs.~\onlinecite{kinetic-choice-of-hmc,langevin-with-general-kinetic-energy}
    for more general settings with non-quadratic kinetic energies).

Next, we introduce the reverse Langevin dynamics which will be the subject of
Section~\ref{subsec-crooks-control-langevin}. Corresponding to
\eqref{langevin-eqn-forward}, the reverse Langevin dynamics is defined as
\begin{align}
  \begin{split}
    dq^R(s) =& -\nabla_p H(q^R(s),p^R(s), T-s)\,ds \\
    dp^R(s) =& \nabla_q H(q^R(s), p^R(s), T-s)\,ds - \gamma(q^R(s),T-s) \nabla_pH(q^R(s),
    p^R(s), T-s)\,ds \\
    & + \sqrt{2\beta^{-1}} \sigma(q^R(s), T-s)\,dw(s)
  \end{split}
  \label{langevin-eqn-backward}
\end{align}
for $s \in [0, T]$, where $(w(s))_{s\in[0,T]}$ denotes possibly another (independent) $m$-dimensional Brownian motion.
Note that, comparing to \eqref{langevin-eqn-forward}, there is a change of sign in the Hamiltonian part of 
\eqref{langevin-eqn-backward}. For \textit{fixed} $s \in [0,T]$, the generator of \eqref{langevin-eqn-backward} at time $s$ is 
\begin{align}
  \begin{split}
    \mathcal{Q}^{R}_s f=& -\nabla_pH(q,p,T-s)\cdot \nabla_q f + \nabla_q H(q,p,T-s)
  \cdot \nabla_p f\\
  &-
    \big[\gamma(q,T-s)\nabla_pH(q,p,T-s)\big]\cdot\nabla_p f + \frac{1}{\beta}
    \gamma(q,T-s):\nabla^2_p f\,,
  \end{split}
  \label{l-s-generator-langevin-backward}
\end{align}
for a test function $f: \mathbb{R}^n \times \mathbb{R}^n \rightarrow \mathbb{R}$.
The following results can be directly verified. We omit its proof since it is
similar to the proof of Lemma~\ref{lemma-generator-property}.
\begin{lemma}
  Given $f,g\in C^2(\mathbb{R}^n \times \mathbb{R}^n)$, for all $s \in
  [0,T]$, we have $\mathcal{Q}^*_s  = \mathcal{Q}^R_{T-s}$ and 
\begin{align}
     \int_{\mathbb{R}^n\times \mathbb{R}^n}
    f\big(\mathcal{Q}_s + \mathcal{Q}^*_s\big)g \,d\pi^\infty_s =
    -\frac{2}{\beta} \int_{\mathbb{R}^n\times \mathbb{R}^n} \gamma\nabla_p
    f\cdot\nabla_p\,g\, d\pi^\infty_s\,.
  \label{l-lr-dual-langevin}
\end{align}
Furthermore, when $g$ is positive, we have 
\begin{align}
  \begin{split}
    \frac{\mathcal{Q}_s g}{g}  =&\, \mathcal{Q}_s (\ln g) +\frac{1}{\beta}
    |\sigma^\top\nabla_p\ln g|^2\,, \\
    \frac{\mathcal{Q}^*_{s} g}{g}  =&\, \mathcal{Q}^*_{s} (\ln g)
    +\frac{1}{\beta} |\sigma^\top\nabla_p\ln g|^2 \,.
  \end{split}
  \label{lg-llogg-langevin}
\end{align}
  \label{lemma-generator-property-langevin}
\end{lemma}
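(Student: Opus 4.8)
The plan is to follow the same three-step template used in the proof of Lemma~\ref{lemma-generator-property}, replacing the full gradient by the partial gradient $\nabla_p$ and exploiting throughout that the noise in \eqref{langevin-eqn-forward} acts only on the momenta while $\gamma=\gamma(q,s)$ depends only on $q$ and is symmetric. All three identities reduce to direct computations, so I do not expect a genuine obstacle; the only point requiring care is the bookkeeping of which differential operators act in $q$ and which in $p$, together with the $p$-independence and symmetry of $\gamma$, which are used repeatedly.

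First, to show $\mathcal{Q}^*_s = \mathcal{Q}^R_{T-s}$, I would rewrite the diffusive part of $\mathcal{Q}^*_s$ in \eqref{q-adjoint-langevin} in non-divergence form: since $\gamma$ is $p$-independent and symmetric, $\frac{\mathrm{e}^{\beta H}}{\beta}\mathrm{div}_p\!\big(\mathrm{e}^{-\beta H}\gamma\nabla_p f\big) = \frac{1}{\beta}\gamma:\nabla_p^2 f - (\gamma\nabla_p H)\cdot\nabla_p f$. Substituting this back into \eqref{q-adjoint-langevin} and then replacing $s$ by $T-s$ reproduces, term by term, the expression for $\mathcal{Q}^R_s$ in \eqref{l-s-generator-langevin-backward}; hence the identity follows by inspection, exactly as $\mathcal{L}^*_s = \mathcal{L}^R_{T-s}$ followed from comparing \eqref{l-adjoint} and \eqref{l-r-s}.

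Second, for the bilinear identity \eqref{l-lr-dual-langevin}, I would add \eqref{l-s-generator-langevin-forward} and \eqref{q-adjoint-langevin}: the Hamiltonian transport terms $\nabla_pH\cdot\nabla_q - \nabla_qH\cdot\nabla_p$ cancel, leaving $(\mathcal{Q}_s + \mathcal{Q}^*_s)g = \frac{2}{\beta}\mathrm{e}^{\beta H}\mathrm{div}_p\!\big(\mathrm{e}^{-\beta H}\gamma\nabla_p g\big)$. Multiplying by $f$ and by the density $\mathcal{Z}(s)^{-1}\mathrm{e}^{-\beta H}$ of $\pi^\infty_s$ from \eqref{mu-s-langevin} cancels the exponential weights, and integration by parts in $p$ (no boundary contribution, under the decay assumptions in force) together with the symmetry of $\gamma$ yields $-\frac{2}{\beta}\int_{\mathbb{R}^n\times\mathbb{R}^n}\gamma\nabla_p f\cdot\nabla_p g\, d\pi^\infty_s$, the verbatim analogue of the derivation of \eqref{l-lr-dual}.

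Finally, for the logarithmic identities \eqref{lg-llogg-langevin}, with $g>0$ I would set $h=\ln g$ and use $\nabla_q g = g\nabla_q h$, $\nabla_p g = g\nabla_p h$, and $\nabla_p^2 g = g\big(\nabla_p^2 h + (\nabla_p h)(\nabla_p h)^\top\big)$. Inserting these into \eqref{l-s-generator-langevin-forward} and dividing by $g$, every term regroups into $\mathcal{Q}_s h$ except the rank-one contribution from the Hessian, which equals $\frac{1}{\beta}\gamma:(\nabla_p h)(\nabla_p h)^\top = \frac{1}{\beta}(\nabla_p h)^\top\gamma\,\nabla_p h = \frac{1}{\beta}|\sigma^\top\nabla_p h|^2$ since $\gamma = \sigma\sigma^\top$; this is the first line of \eqref{lg-llogg-langevin}. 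The second line follows identically from \eqref{q-adjoint-langevin}, using the same rewriting of its diffusive part as in the first step, which completes the plan.
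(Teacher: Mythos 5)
Your proposal is correct and is essentially the proof the paper intends: the paper omits the argument, stating it is similar to the proof of Lemma~\ref{lemma-generator-property}, and your three steps (rewriting the diffusive part of $\mathcal{Q}^*_s$ in non-divergence form to match $\mathcal{Q}^R_{T-s}$, cancelling the Hamiltonian transport terms and integrating by parts in $p$ against $\mathrm{e}^{-\beta H}$, and applying the chain rule for $\ln g$ with the rank-one Hessian term giving $\frac{1}{\beta}|\sigma^\top\nabla_p\ln g|^2$) are exactly that adaptation. The bookkeeping of the $p$-independence and symmetry of $\gamma$ is handled correctly throughout.
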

Note that \eqref{lg-llogg-langevin} has been used in
Ref.~\onlinecite{Iacobucci2019} to study Langevin dynamics under external
forcing.  Similar to the Brownian dynamics case, Lemma~\ref{lemma-generator-property-langevin} can be used to give a simple derivation of the fluctuation relation between
the forward Langevin dynamics \eqref{langevin-eqn-forward} and the reverse
Langevin dynamics \eqref{langevin-eqn-backward}. We refer to Ref.~\onlinecite{non-equilibrium-2018} for details. 

Before concluding, we discuss the smoothness and positivity of
the density $\varrho$ \eqref{pi-rho-density}.
\begin{remark}
  Due to the degeneracy of the noise term in \eqref{langevin-eqn-forward}, the smoothness and the positivity
  of the density $\varrho$ \eqref{pi-rho-density} are less apparent comparing
  to the Brownian dynamics case in Section~\ref{sec-overdamped}. In the
  time-inhomogeneous setting, the smoothness of densities has been studied in Refs.~\onlinecite{mufa-chen-time-dependent,Hypoelliptic-non-homogeneous-2002}
using Malliavin calculus. The positivity of $\varrho$ follows by
  representing it as a path ensemble average of the reverse process \eqref{langevin-eqn-backward} using 
  Feynman-Kac formula.
  See the representation \eqref{varrho-represented-by-feynmann-kac-langevin}
  in Section~\ref{subsec-crooks-control-langevin} for the density of \eqref{langevin-eqn-backward}
  starting from $\pi^\infty_T$.
  We also refer to the proof of Ref.~\onlinecite[Theorem
  3.3.6.1]{michel-pardoux-1990}, which can be extended to the time-inhomogeneous setting. 
  \label{rmk-positivity-langevin-gaussian-case}
\end{remark}

We give an example where the positivity of the densities
can be seen directly. 
\begin{example}[Langevin dynamics under time-dependent Gaussian potential]
Consider the Hamiltonian $H(q,p,s) = \frac{1}{2}\eta(s)|q|^2 + \frac{p^\top M^{-1}p}{2}$ for some 
      continuous function $\eta: [0,T]\rightarrow \mathbb{R}^+$ taking
      positive values, and $\sigma=\gamma=I_n$. 
In this case, \eqref{langevin-eqn-forward} is a linear SDE with time-dependent
  coefficients, whose solution can be expressed as  
      \begin{equation}
  \begin{pmatrix}
    q(s)\\
    p(s)\\
  \end{pmatrix}
	= \Gamma(s)^{-1} \begin{pmatrix}
    q(0)\\
    p(0)\\
  \end{pmatrix}
	+ 
	\Gamma(s)^{-1} \bigintsss_0^s \Gamma(t) 
	\begin{pmatrix}
	   0 \\
	   dw(t)
	\end{pmatrix}
	  \,,\quad s \in [0,T]\,,
	\label{qp-sol-gaussian}
      \end{equation}
      where $\Gamma:[0,T]\rightarrow \mathbb{R}^{2n \times 2n}$  solves the
      matrix-valued ordinary differential equation (ODE)
      \begin{equation}
	  \frac{d\Gamma(s)}{ds} = -\Gamma(s)\Sigma(s)\,, ~ \forall s \in
	[0,T]\,, \quad 
	\mbox{for} ~ \Sigma(s) = \begin{pmatrix}
	  0 & -M^{-1} \\
	  \eta(s)I_n & M^{-1} 
	\end{pmatrix} \in \mathbb{R}^{2n \times 2n}\,,
	\label{matrix-ode-gaussian}
      \end{equation}
      with initial condition $\Gamma(0) = I_{2n}$. Applying Jacobi's
      formula (for derivatives of matrix determinants) to \eqref{matrix-ode-gaussian} gives
      $\frac{d\det\Gamma(s)}{ds} = -\Tr(\Sigma(s)) \det\Gamma(s) = -\Tr(M^{-1}) \det\Gamma(s)$, from which we
      get $\det\Gamma(s) = \mathrm{e}^{-\Tr(M^{-1}) s} > 0$ and therefore
      $\Gamma(s)$ is invertible for $s \in [0,T]$. Since the state $(q(s),p(s))$ \eqref{qp-sol-gaussian}
      is the sum of the image of $(q(0),p(0))$ under the invertible
      (one-to-one) linear map $\Gamma(s)$ and a centered Gaussian random variable, it is straightforward to 
      see that the density of $(q(s),p(s))$ at time $s\in [0,T]$ is positive everywhere
      whenever this is true for the initial density at time $s=0$.
      \label{example-gaussian-langevin}
\end{example}

\subsection{Relative entropy estimate}
\label{subsec-entropy-langevin}
Given $s \in [0, T]$, recall that $\pi^\infty_s$ and $\pi_s$ are the probability measures defined in
\eqref{mu-s-langevin} and \eqref{pi-rho-density}, respectively.
The goal of this section is to estimate the relative entropy 
\begin{align}
  \mathcal{R}^{\mathrm{Lan}}(s) = D_{KL}(\pi_s\,\|\,\pi^\infty_s) 
  = \int_{\mathbb{R}^n\times\mathbb{R}^n} \varrho(q,p,s)\, \ln
  \frac{d\pi_s}{d\pi^{\infty}_s}(q,p)\,  dqdp \,.
  \label{entropy-sr-langevin}
\end{align}
For simplicity we consider the case where 
  \begin{equation}
    \sigma=\sqrt{\xi} I_n\,,\quad 
  H(q,p,s) = V(q,s) + \frac{|p|^2}{2}, \quad (q,p,s) \in \mathbb{R}^n\times
  \mathbb{R}^n \times [0, T]\,,
    \label{special-hamiltonian}
\end{equation}
for some constant $\xi>0$ and some time-dependent $C^2$-smooth potential function 
$V: \mathbb{R}^n\times [0,T]\rightarrow \mathbb{R}$. Then,
\eqref{mu-s-langevin} becomes $d\pi^\infty_s=\mathcal{Z}(s)^{-1}
\mathrm{e}^{-\beta (V(q,s)+ |p|^2/2)}dqdp$, whose marginal
probability measure in position $q$ is (cf.\ \eqref{invariant-mu})
\begin{align}
  d\nu^\infty_s(q) = \frac{1}{Z(s)} \mathrm{e}^{-\beta V(q,s)}\,dq\,, \quad
  \mbox{where}~~ Z(s) = \int_{\mathbb{R}^n} \mathrm{e}^{-\beta V(q,s)}\,dq \,.
  \label{mu-s-langevin-q-margin}
\end{align}

Let us first state the following two results, which in fact hold for Langevin dynamics in the general form \eqref{langevin-eqn-forward}. 
We omit their proofs since they are similar to the proofs of
Lemma~\ref{lemma-equation-of-log-density-nu-to-mu} and
Proposition~\ref{prop-production-rate-of-entropy} in
Section~\ref{subsec-entropy-brownian}, respectively.
\begin{lemma}
  For $s \in [0,T]$, we have 
\begin{align}
  \frac{\partial }{\partial s} \left(\ln\frac{d\pi_s}{d\pi^\infty_s}\right)
  =& \beta \Big(\frac{\partial H}{\partial s} - \frac{d \mathcal{F}}{ds} \Big)
   + \mathcal{Q}_{s}^* \Big(\ln\frac{d\pi_s}{d\pi^\infty_s}\Big)+
  \frac{1}{\beta}\left|\sigma^\top\nabla_p \Big(\ln\frac{d\pi_s}{d\pi^\infty_s}\Big)\right|^2 \,,
  \label{log-pde-relative-density-langevin}
\end{align}
  where $\mathcal{F}$ is the free energy \eqref{free-energy-f-langevin},
  and $\mathcal{Q}_{s}^*$ is the adjoint operator \eqref{q-adjoint-langevin}.
  \label{lemma-equation-of-log-density-nu-to-mu-langevin}
\end{lemma}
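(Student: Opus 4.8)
The plan is to follow the proof of Lemma~\ref{lemma-equation-of-log-density-nu-to-mu} essentially verbatim, working in the phase space $\mathbb{R}^n\times\mathbb{R}^n$ with the Langevin generator $\mathcal{Q}_s$ and its $\pi^\infty_s$-adjoint $\mathcal{Q}^*_s$ \eqref{q-adjoint-langevin} in place of $\mathcal{L}_s$ and $\mathcal{L}^*_s$. First I would write down the Radon--Nikodym density explicitly: from \eqref{mu-s-langevin} and \eqref{pi-rho-density},
\[
  \frac{d\pi_s}{d\pi^\infty_s}(q,p) = \mathcal{Z}(s)\,\mathrm{e}^{\beta H(q,p,s)}\,\varrho(q,p,s)\,,\qquad (q,p)\in\mathbb{R}^n\times\mathbb{R}^n\,,
\]
which is positive and $C^\infty$-smooth in $(q,p,s)$ under the standing assumptions on $\varrho$ (smoothness and positivity, cf.\ Remark~\ref{rmk-positivity-langevin-gaussian-case}) and on $H$, so that all manipulations below are justified in the classical sense.

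Next I would differentiate this identity in $s$. The contribution of $\partial_s\varrho$ is rewritten by means of the Fokker--Planck equation \eqref{fokker-planck-langevin}; since $\mathcal{Z}(s)$ is independent of $(q,p)$ it passes through $\mathcal{Q}^*_s$, and after combining with the term $\beta\frac{\partial H}{\partial s}$ coming from differentiating $\mathrm{e}^{\beta H}$ one is left with $\beta\frac{\partial H}{\partial s}\frac{d\pi_s}{d\pi^\infty_s} + \mathcal{Q}^*_s\big(\frac{d\pi_s}{d\pi^\infty_s}\big)$. The term from $\frac{d\mathcal{Z}}{ds}$ is handled through the free energy \eqref{free-energy-f-langevin}: the relation $\mathcal{F}(s)=-\beta^{-1}\ln\mathcal{Z}(s)$ gives $\frac{d\mathcal{Z}}{ds}=-\beta\,\mathcal{Z}(s)\frac{d\mathcal{F}}{ds}$, contributing $-\beta\frac{d\mathcal{F}}{ds}\frac{d\pi_s}{d\pi^\infty_s}$. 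Collecting everything yields
\[
  \frac{\partial}{\partial s}\Big(\frac{d\pi_s}{d\pi^\infty_s}\Big)
  = \beta\Big(\frac{\partial H}{\partial s}-\frac{d\mathcal{F}}{ds}\Big)\frac{d\pi_s}{d\pi^\infty_s}
  + \mathcal{Q}^*_s\Big(\frac{d\pi_s}{d\pi^\infty_s}\Big)\,,
\]
the exact analogue of the intermediate display in the proof of Lemma~\ref{lemma-equation-of-log-density-nu-to-mu}.

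Finally I would divide by the positive density $\frac{d\pi_s}{d\pi^\infty_s}$ and invoke the second identity of \eqref{lg-llogg-langevin} in Lemma~\ref{lemma-generator-property-langevin}, applied to $g=\frac{d\pi_s}{d\pi^\infty_s}$, to rewrite $\mathcal{Q}^*_s g/g$ as $\mathcal{Q}^*_s(\ln g)+\beta^{-1}|\sigma^\top\nabla_p\ln g|^2$; this gives \eqref{log-pde-relative-density-langevin} at once. I do not anticipate a real obstacle: each step is elementary, and the only points requiring care are the bookkeeping of the prefactors $\mathrm{e}^{\pm\beta H}$ and $\mathcal{Z}(s)$ and the (standard) justification, via the smoothness of $\varrho$ and $H$, that \eqref{fokker-planck-langevin} holds classically and that differentiation under the operators is permitted. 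It is worth noting already at this stage that only the partial gradient $\nabla_p$ survives in the last term of \eqref{log-pde-relative-density-langevin}, which reflects the degeneracy of \eqref{langevin-eqn-forward} (the noise acting on momenta only); this is exactly the feature that later prevents a direct use of a logarithmic Sobolev inequality for $\pi^\infty_s$ and necessitates the hypocoercivity approach.
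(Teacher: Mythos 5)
Your proof is correct and is exactly the argument the paper intends: the paper omits the proof of Lemma~\ref{lemma-equation-of-log-density-nu-to-mu-langevin}, stating only that it parallels Lemma~\ref{lemma-equation-of-log-density-nu-to-mu}, and your phase-space transcription (Radon--Nikodym density $\mathcal{Z}(s)\mathrm{e}^{\beta H}\varrho$, Fokker--Planck equation, the relation $d\mathcal{Z}/ds=-\beta\,\mathcal{Z}\,d\mathcal{F}/ds$, then division by the positive density and the second identity of \eqref{lg-llogg-langevin}) is precisely that parallel argument. One small remark: for the middle step to close you need the Fokker--Planck equation in the form $\partial_s\varrho=\mathrm{e}^{-\beta H}\mathcal{Q}^*_s(\mathrm{e}^{\beta H}\varrho)$ (as used in Appendix~\ref{app-sec-entropy-estimate-langevin} and consistent with \eqref{fokker-planck}), which is what your computation implicitly assumes --- the exponents in the displayed \eqref{fokker-planck-langevin} appear to be transposed.
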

\begin{prop}
  For $s \in [0,T]$, we have
\begin{align}
  \begin{split}
    \frac{d \mathcal{R}^{\mathrm{Lan}}(s)}{ds}  = & -\beta
    \int_{\mathbb{R}^n\times \mathbb{R}^n} \frac{\partial H}{\partial s}\,
    d\pi^\infty_s + \beta \int_{\mathbb{R}^n\times \mathbb{R}^n} \frac{\partial
    H}{\partial s}\, d\pi_s - 
    \frac{1}{\beta} \int_{\mathbb{R}^n\times \mathbb{R}^n}
    \left|\sigma^\top\nabla_p \Big(\ln\frac{d\pi_s}{d\pi^\infty_s}\Big)\right|^2 \, d\pi_s\,. 
  \end{split}
  \label{production-rate-of-entropy-langevin}
\end{align}
  \label{prop-production-rate-of-entropy-langevin}
\end{prop}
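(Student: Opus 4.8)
The plan is to run, essentially verbatim, the computation used for the Brownian case in Proposition~\ref{prop-production-rate-of-entropy}, making throughout the substitutions $V\mapsto H$, $\nu_s\mapsto\pi_s$, $\nu^\infty_s\mapsto\pi^\infty_s$, $\rho\mapsto\varrho$, $\mathcal{L}_s\mapsto\mathcal{Q}_s$, $\mathcal{L}^*_s\mapsto\mathcal{Q}^*_s$, $F\mapsto\mathcal{F}$, $Z\mapsto\mathcal{Z}$, and --- the only structural change --- the full gradient $\nabla$ by the partial gradient $\nabla_p$ in the dissipative term. The identity \eqref{log-pde-relative-density-langevin} of Lemma~\ref{lemma-equation-of-log-density-nu-to-mu-langevin} (itself obtained by transcribing the proof of Lemma~\ref{lemma-equation-of-log-density-nu-to-mu}, i.e.\ differentiating $\frac{d\pi_s}{d\pi^\infty_s}=\mathcal{Z}(s)\mathrm{e}^{\beta H}\varrho$, inserting \eqref{fokker-planck-langevin}, and invoking \eqref{lg-llogg-langevin}) is taken as given.

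First I would differentiate \eqref{entropy-sr-langevin} under the integral sign and split $\frac{d}{ds}\mathcal{R}^{\mathrm{Lan}}(s)$ into one piece carrying $\partial_s\varrho$ and one carrying $\varrho\,\partial_s\ln\frac{d\pi_s}{d\pi^\infty_s}$. Into the first piece I would substitute the Fokker--Planck equation \eqref{fokker-planck-langevin} and, using $\frac{d\pi_s}{d\pi^\infty_s}=\mathcal{Z}(s)\mathrm{e}^{\beta H}\varrho$, rewrite the Lebesgue integral as $\int \big(\ln\frac{d\pi_s}{d\pi^\infty_s}\big)\,\mathcal{Q}^*_s\big(\frac{d\pi_s}{d\pi^\infty_s}\big)\,d\pi^\infty_s$; since $\mathcal{Q}^*_s$ is the $L^2(\pi^\infty_s)$-adjoint of $\mathcal{Q}_s$, this equals $\int \mathcal{Q}_s\big(\ln\frac{d\pi_s}{d\pi^\infty_s}\big)\,d\pi_s$. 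Into the second piece I would insert \eqref{log-pde-relative-density-langevin}, which, after using $\varrho\,dqdp=d\pi_s$, contributes $\beta\int\big(\frac{\partial H}{\partial s}-\frac{d\mathcal{F}}{ds}\big)\,d\pi_s$, $\int\mathcal{Q}^*_s\big(\ln\frac{d\pi_s}{d\pi^\infty_s}\big)\,d\pi_s$, and $\frac{1}{\beta}\int|\sigma^\top\nabla_p\ln\frac{d\pi_s}{d\pi^\infty_s}|^2\,d\pi_s$.

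Adding the two pieces, the two operator terms combine into $\int\big[(\mathcal{Q}_s+\mathcal{Q}^*_s)\big(\ln\frac{d\pi_s}{d\pi^\infty_s}\big)\big]\frac{d\pi_s}{d\pi^\infty_s}\,d\pi^\infty_s$, and I would apply the duality identity \eqref{l-lr-dual-langevin} with $f=\frac{d\pi_s}{d\pi^\infty_s}$ and $g=\ln\frac{d\pi_s}{d\pi^\infty_s}$, together with $\nabla_p f=f\,\nabla_p\ln f$ and $\gamma=\sigma\sigma^\top$, to turn it into $-\frac{2}{\beta}\int|\sigma^\top\nabla_p\ln\frac{d\pi_s}{d\pi^\infty_s}|^2\,d\pi_s$; combined with the $+\frac{1}{\beta}(\cdots)$ term already present, this leaves exactly $-\frac{1}{\beta}\int|\sigma^\top\nabla_p\ln\frac{d\pi_s}{d\pi^\infty_s}|^2\,d\pi_s$. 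Finally I would note $\frac{d\mathcal{F}}{ds}=\int\frac{\partial H}{\partial s}\,d\pi^\infty_s$, which follows by differentiating \eqref{free-energy-f-langevin} and \eqref{mu-s-langevin}, so that the $\partial_s H$ contributions collapse to $-\beta\int\frac{\partial H}{\partial s}\,d\pi^\infty_s+\beta\int\frac{\partial H}{\partial s}\,d\pi_s$. This is \eqref{production-rate-of-entropy-langevin}.

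The algebra is entirely mechanical; the point that actually needs care is the justification of these formal steps in the hypoelliptic setting --- exchanging $\frac{d}{ds}$ with the integral, and discarding the boundary terms in the integrations by parts hidden inside the adjointness of $\mathcal{Q}^*_s$ and inside \eqref{l-lr-dual-langevin}. This relies on the standing smoothness and decay assumptions on $\varrho$ (Remark~\ref{rmk-positivity-langevin-gaussian-case}) and on growth control of $H$ and $\partial_s H$, and is handled by a truncation and dominated-convergence argument just as in the Brownian case. I would emphasize that, unlike in \eqref{formula-dr}, only $\nabla_p$ survives in the dissipative term because $\gamma$ multiplies only the momentum component of the noise in \eqref{langevin-eqn-forward}; this is cost-free for deriving the identity but is precisely the degeneracy that rules out a direct logarithmic Sobolev estimate and forces the hypocoercive modification carried out in Section~\ref{subsec-entropy-langevin}.
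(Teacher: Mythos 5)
Your proposal is correct and is precisely the argument the paper intends: the proof of Proposition~\ref{prop-production-rate-of-entropy-langevin} is omitted there exactly because it is the transcription of the proof of Proposition~\ref{prop-production-rate-of-entropy} with $\nabla\mapsto\nabla_p$, $\mathcal{L}_s\mapsto\mathcal{Q}_s$, etc., using Lemma~\ref{lemma-equation-of-log-density-nu-to-mu-langevin}, the duality identity \eqref{l-lr-dual-langevin}, and $\frac{d\mathcal{F}}{ds}=\int\frac{\partial H}{\partial s}\,d\pi^\infty_s$ in the same way. The only remark worth recording is that your computation implicitly uses the Fokker--Planck equation in the form $\frac{\partial\varrho}{\partial s}=\mathrm{e}^{-\beta H}\mathcal{Q}^*_s(\mathrm{e}^{\beta H}\varrho)$, consistent with \eqref{fokker-planck} and \eqref{fokker-planck-langevin-reverse}; the exponents in \eqref{fokker-planck-langevin} as printed appear to be a typo.
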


For the potential $V$ in \eqref{special-hamiltonian}, we make the assumption that there exist constants $L_1, L_2, L \ge~0$, such that 
\begin{align}
  \Big\|\frac{\partial V}{\partial s}(\cdot, s)\Big\|_\infty \le L_1\,,\quad 
  \Big\|\frac{\partial \nabla V}{\partial s}(\cdot, s)\Big\|_\infty \le L_2\,, \quad 
  \sup_{q\in \mathbb{R}^n} \|\nabla^2 V(q, s)\|_{2} \le L\,, \quad \forall s\in [0,T]\,,
  \label{langevin-assump-1}
\end{align}
 where $\|\cdot\|_\infty$, $\|\cdot\|_2$ are the supremum norm and the
 $2$-matrix norm defined in Section~\ref{sec-intro}, respectively. Let $a, b,
 c >0$ be positive constants such that the (two by two) matrices
\begin{align}
  S = 
  \begin{pmatrix}
    a & b \\
    b & c 
  \end{pmatrix}\,,
  \quad
  \widetilde{S} = 
  \begin{pmatrix}
    \xi\left(\frac{1}{\beta} + 2a\right)-2b(1+L) & -(a + b\xi + cL) \\
    -(a + b\xi + cL) & 2b-c 
  \end{pmatrix}
  \label{S-1-2}
\end{align}
where $L\ge 0$ is the upper bound in \eqref{langevin-assump-1}, are positive
semidefinite and positive definite, respectively. Denoting by $\lambda_i$ and
$\widetilde{\lambda}_i$, where $i=1,2$, the eigenvalues of $S$ and
$\widetilde{S}$, respectively, we have (the two eigenvalues are different
since the off-diagonal entries are nonzero)
  \begin{equation}
    0 \le \lambda_1 < \lambda_2,~ \mbox{and} \quad 0 < \widetilde{\lambda}_1 < \widetilde{\lambda}_2\,. 
    \label{s-eigenvalues}
  \end{equation}

We are ready to state the main result concerning the upper bound of the
relative entropy~\eqref{entropy-sr-langevin} in the case \eqref{special-hamiltonian}.
\begin{thm}
  Let $\mathcal{R}^{\mathrm{Lan}}(s)$ be the relative entropy \eqref{entropy-sr-langevin}. Consider the case \eqref{special-hamiltonian},
  where $\xi>0$ is a constant and $V$ is $C^2$-smooth, such that \eqref{langevin-assump-1} holds for some constants $L_1, L_2, L \ge 0$.
  Assume that the marginal measure $\nu_s^\infty$ \eqref{mu-s-langevin-q-margin} satisfies the logarithmic Sobolev inequality with constant $\kappa>0$, for all $s\in [0, T]$.
Let $a, b, c >0$ be constants, which satisfy $2b>c$ and $ac \ge b^2$, such
  that the matrices $S$ and $\widetilde{S}$ \eqref{S-1-2} are positive
  semidefinite and positive definite, respectively.
Define 
\begin{equation}
  \omega = \frac{\widetilde{\lambda}_1}{2} \Big(\frac{1}{2\min\{\kappa, \beta\}} + \lambda_2\Big)^{-1}\,,
  \label{choice-of-omega}
\end{equation}
  where $\widetilde{\lambda}_1$ and $\lambda_2$ are the eigenvalues in
  \eqref{s-eigenvalues}. Then, we have 
  \begin{align}
    \mathcal{R}^{\mathrm{Lan}}(s) \le \mathcal{E}(s) \le \mathcal{E}(0)
    \mathrm{e}^{-\omega s} + \frac{\beta^2L_1^2}{2\omega^2} + \frac{\beta^2}{\omega}\left(c+
    \frac{b}{2}\right)L_2^2\,, \quad \forall~s\in[0,T]\,,
  \label{upper-bound-thm}
  \end{align}
  where 
  \begin{equation}
\begin{aligned}
  \mathcal{E}(s) = \mathcal{R}^{\mathrm{Lan}}(s) +  \int_{\mathbb{R}^n \times
  \mathbb{R}^n} 
  \bigg[&\,a \left|\nabla_p\left(\ln\frac{d\pi_s}{d\pi^\infty_s}\right)\right|^2  +
  2b
  \nabla_p\Big(\ln\frac{d\pi_s}{d\pi^\infty_s}\Big)\cdot
  \nabla_q\Big(\ln\frac{d\pi_s}{d\pi^\infty_s}\Big)
   \\
  &+ c\left|\nabla_q\left(\ln\frac{d\pi_s}{d\pi^\infty_s}\right)\right|^2\bigg]
  \,d\pi_s \,.
\end{aligned}
  \label{modified-ent}
  \end{equation}
  \label{thm-entropy-langevin}
\end{thm}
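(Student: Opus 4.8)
The plan is to prove \eqref{upper-bound-thm} by Villani's hypocoercivity method applied at the level of relative entropy. The starting point is the production-rate identity \eqref{production-rate-of-entropy-langevin} of Proposition~\ref{prop-production-rate-of-entropy-langevin}: in the case \eqref{special-hamiltonian} it dissipates only the \emph{momentum} gradient of $h_s := \ln\frac{d\pi_s}{d\pi^\infty_s}$, so $\mathcal{R}^{\mathrm{Lan}}(s)$ alone does not satisfy a closed Gronwall inequality. To remedy this one works with the modified functional $\mathcal{E}(s)$ in \eqref{modified-ent}, which augments $\mathcal{R}^{\mathrm{Lan}}(s)$ by the quadratic form $\int (\nabla_p h_s,\nabla_q h_s)\,S\,(\nabla_p h_s,\nabla_q h_s)^\top\,d\pi_s$ with $S$ as in \eqref{S-1-2}. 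The first inequality $\mathcal{R}^{\mathrm{Lan}}(s)\le\mathcal{E}(s)$ in \eqref{upper-bound-thm} is then immediate from the hypothesis $ac\ge b^2$, i.e.\ $S\succeq 0$. It remains to establish a differential inequality of the form $\frac{d}{ds}\mathcal{E}(s)\le -\omega\,\mathcal{E}(s) + \frac{\beta^2 L_1^2}{2\omega} + \beta^2\big(c+\frac{b}{2}\big)L_2^2$, after which Gronwall's inequality yields \eqref{upper-bound-thm}.

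The computational core is to differentiate in $s$ each of the three quadratic terms $\int a|\nabla_p h_s|^2\,d\pi_s$, $\int 2b\,\nabla_p h_s\cdot\nabla_q h_s\,d\pi_s$ and $\int c|\nabla_q h_s|^2\,d\pi_s$. Using the Fokker-Planck equation \eqref{fokker-planck-langevin}, the evolution equation \eqref{log-pde-relative-density-langevin} for $h_s$ (Lemma~\ref{lemma-equation-of-log-density-nu-to-mu-langevin}), and integration by parts with respect to $\pi^\infty_s$ together with Lemma~\ref{lemma-generator-property-langevin}, each derivative splits into: (i) a nonpositive dissipative part coming from the $\nabla^2_p$ block of $\mathcal{Q}_s$; (ii) commutator terms from $[\nabla_p,\mathcal{Q}_s]$ and $[\nabla_q,\mathcal{Q}_s]$ — here $[\nabla_q,\mathcal{Q}_s]$ produces the crucial $\nabla^2 V\,\nabla_p h_s$ coupling that feeds dissipativity from the $p$-variable into the $q$-variable; and (iii) time-inhomogeneous terms involving $\frac{\partial V}{\partial s}$ and $\frac{\partial\nabla V}{\partial s}$, stemming from the $\beta\frac{\partial H}{\partial s}$ term in \eqref{log-pde-relative-density-langevin} (with $\frac{\partial H}{\partial s}=\frac{\partial V}{\partial s}$ here). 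The bounds \eqref{langevin-assump-1} on $\|\frac{\partial V}{\partial s}\|_\infty$, $\|\frac{\partial\nabla V}{\partial s}\|_\infty$ and $\|\nabla^2 V\|_2$ are used, via Cauchy--Schwarz and Young's inequality, to control (ii) and (iii) in terms of $L,L_1,L_2$ and the quantities $\int|\nabla_p h_s|^2\,d\pi_s$, $\int|\nabla_q h_s|^2\,d\pi_s$; the matrix $\widetilde{S}$ in \eqref{S-1-2} is precisely the quadratic form in $(|\nabla_p h_s|,|\nabla_q h_s|)$ that emerges after this bookkeeping of the good and commutator terms, the conditions $2b>c$ and $\widetilde{S}\succ 0$ ensuring the required sign.

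Assembling the pieces gives $\frac{d}{ds}\mathcal{E}(s)\le -\widetilde{\lambda}_1\int\big(|\nabla_p h_s|^2 + |\nabla_q h_s|^2\big)\,d\pi_s + (\text{inhomogeneous terms})$, using $\widetilde{S}\succeq\widetilde{\lambda}_1 I_2$ from \eqref{s-eigenvalues}. To turn the dissipation into $-\omega\,\mathcal{E}(s)$ one exploits the product structure of $\pi^\infty_s$ in \eqref{special-hamiltonian}: its $q$-marginal $\nu^\infty_s$ satisfies the logarithmic Sobolev inequality with constant $\kappa$ and its Gaussian $p$-marginal with constant $\beta$, so $\pi^\infty_s$ satisfies it with constant $\min\{\kappa,\beta\}$; hence $\mathcal{R}^{\mathrm{Lan}}(s)\le\frac{1}{2\min\{\kappa,\beta\}}\int(|\nabla_p h_s|^2+|\nabla_q h_s|^2)\,d\pi_s$, while $S\preceq\lambda_2 I_2$ bounds the added quadratic form by $\lambda_2\int(|\nabla_p h_s|^2+|\nabla_q h_s|^2)\,d\pi_s$. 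Adding these shows $\mathcal{E}(s)\le\big(\frac{1}{2\min\{\kappa,\beta\}}+\lambda_2\big)\int(|\nabla_p h_s|^2+|\nabla_q h_s|^2)\,d\pi_s$, so with $\omega$ as in \eqref{choice-of-omega} the dissipation dominates $2\omega\,\mathcal{E}(s)$. Finally, the inhomogeneous terms are bounded: the $\frac{\partial V}{\partial s}$-contribution is $\le\beta L_1\sqrt{2\mathcal{R}^{\mathrm{Lan}}(s)}$ by \eqref{integration-bounded-by-tv} and the Csisz\'ar--Kullback--Pinsker inequality \eqref{csiszar-kullback-pinsker}, and the $\frac{\partial\nabla V}{\partial s}$-contribution by Cauchy--Schwarz; Young's inequality absorbs an $\omega\,\mathcal{E}(s)$-portion of the dissipation and leaves the constant source $\frac{\beta^2 L_1^2}{2\omega}+\beta^2\big(c+\frac{b}{2}\big)L_2^2$, producing the desired differential inequality and hence \eqref{upper-bound-thm} by Gronwall.

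The main obstacle I expect is the exact bookkeeping in the second step: differentiating the cross term $\int 2b\,\nabla_p h_s\cdot\nabla_q h_s\,d\pi_s$ and organizing all commutator and nonlinear (Fisher-type) contributions so that they fit precisely into $\widetilde{S}$, while simultaneously tracking the extra time-inhomogeneous terms — absent in the classical time-homogeneous hypocoercivity estimates — and checking they can be absorbed with the stated constants. Because of its length, this computation is carried out in Appendix~\ref{app-sec-entropy-estimate-langevin}.
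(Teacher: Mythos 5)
Your proposal follows essentially the same route as the paper's proof in Appendix~\ref{app-sec-entropy-estimate-langevin}: the modified functional $\mathcal{E}(s)$ with $S\succeq 0$ giving the first inequality, differentiation of the three quadratic terms via the commutator calculus of Lemma~\ref{lemma-entropy-estimate-langevin-1}, the organization of the good and commutator terms into $\widetilde{S}$, the logarithmic Sobolev inequality for the product measure with constant $\min\{\kappa,\beta\}$, the Csisz\'ar--Kullback--Pinsker and Young inequalities to handle the $\frac{\partial V}{\partial s}$ source (absorbing an $\omega\mathcal{E}$ portion of the $2\omega\mathcal{E}$ dissipation), Cauchy--Schwarz for the $\frac{\partial\nabla V}{\partial s}$ terms, and Gronwall. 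The outline is correct and all the constants check out, though the deferred bookkeeping step is exactly where the bulk of the actual work lies.
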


The quantity \eqref{modified-ent} has been introduced in
Ref.~\onlinecite[Section 6]{villani2009hypocoercivity}, where a general hypocoercivity
theory was developed. The proof of Theorem~\ref{thm-entropy-langevin} is in
fact adapted from Ref.~\onlinecite[Theorem 28, Section
6]{villani2009hypocoercivity} and is given in Appendix~\ref{app-sec-entropy-estimate-langevin} due to its technicality. 

      Concerning the estimate \eqref{upper-bound-thm}, since $L_1$, $L_2$ are
      the upper bounds in \eqref{langevin-assump-1},  the estimate \eqref{upper-bound-thm}
      implies that the relative entropy $\mathcal{R}^{\mathrm{Lan}}(s)$ \eqref{entropy-sr-langevin} is small when the potential $V$ (and its gradient) varies slowly in time. In particular, when
      $L_1=L_2=0$, it recovers the exponential entropy decay for Langevin
      dynamics under a time-independent potential $V(q)$ (see
      Ref.~\onlinecite[Theorem 28, Section 6]{villani2009hypocoercivity}). Let
      us point out that it is possible to alleviate the boundedness condition
      on the Hessian of $V$ in \eqref{langevin-assump-1}, by adapting the
      approach developed in the recent work Ref.~\onlinecite{CATTIAUX2019108288} to the time-inhomogeneous setting.

We conclude this section with the following remarks on Theorem~\ref{thm-entropy-langevin}.
\begin{remark}

    Instead of \eqref{langevin-assump-1}, one can work with the assumption
      \begin{equation} 
	\Big\|\frac{\partial V}{\partial s}(\cdot, s)\Big\|_\infty \le L_1(s),\quad
	\Big\|\frac{\partial \nabla V}{\partial s}(\cdot, s)\Big\|_\infty \le
	L_2(s), \quad \sup_{q\in \mathbb{R}^n} \left\|\nabla^2 V\right(q, s)\|_{2} \le L\,,
	\label{langevin-assump-1-generationa-in-remark}
      \end{equation}
      for all $s\in [0,\infty)$, where $L>0$ is a constant, both $L_1(s)$ and $L_2(s)$ are continuous functions taking nonnegative values. Also assume
       that $\nu_s^\infty$ \eqref{mu-s-langevin-q-margin} satisfies
       logarithmic Sobolev inequality with time-dependent constant
       $\kappa(s)$, for all $s\in [0, \infty)$. Then, the same proof of Theorem~\ref{thm-entropy-langevin} actually gives 
  \begin{equation}
      \mathcal{E}(s) \le \mathrm{e}^{-\int_0^s \omega(u)\,du}\mathcal{E}(0) + 
    \frac{\beta^2}{2}\int_0^s \frac{L_1^2(u)}{\omega(u)}\,\mathrm{e}^{-\int_u^s \omega(u')\,du'}\,du 
       + 
\beta^2 \left(c+ \frac{b}{2}\right) \int_0^s L_2^2(u)\mathrm{e}^{-\int_u^s \omega(u')\,du'}\,du \,,
    \label{entropy-bound-time-depend-l1l2}
  \end{equation}
  where (cf. \eqref{choice-of-omega}) $\omega(s) = \frac{\widetilde{\lambda}_1}{2} \Big(\frac{1}{2\min\{\kappa(s), \beta\}} +
  \lambda_2\Big)^{-1}$. 
In particular, \eqref{entropy-bound-time-depend-l1l2} implies that $\lim\limits_{s\rightarrow +\infty}
  \mathcal{R}^{\mathrm{Lan}}(s) = 0$, when both $\lim\limits_{s\rightarrow +\infty} L_1(s) = 
  \lim\limits_{s\rightarrow +\infty} L_2(s) = 0$ and there exists a constant $\bar{\kappa}>0$
  such that $\kappa(s)\ge \bar{\kappa}$ for all $s \ge 0$. 
      \label{rmk-assumption-langevin}
\end{remark}
\begin{remark}
  Concerning sufficient conditions (on the constants $a,b$ and $c$) for the
  positive (semi)definiteness of $S$ and $\widetilde{S}$ in \eqref{S-1-2}, it is
  clear that $S$ is positive semidefinite when $a,b,c > 0$ and $ac \ge b^2$. Under
  the additional condition $2b>c$, the matrix $\widetilde{S}$ is positive definite as long as $a,b$ and $c$ are small enough. When $a=b=c$, 
  in particular, the integral in \eqref{modified-ent} recovers the degenerate Fisher
  information considered in Refs.~\onlinecite{Letizia-olla2017,Iacobucci2019},
  which has the advantage that only one (instead of three) constant has to be chosen in \eqref{S-1-2}.

  Next, we discuss the scaling of the constants $a,b,c$ and $\omega$ in two
  asymptotic regimes of $\xi$ based on the explicit expressions in \eqref{S-1-2}. When $\xi\rightarrow 0$, 
it is necessary that the constants $a,b$ and $c$ scale as $\mathcal{O}(\xi)$, in order for $\widetilde{S}$ to
  be positive definite. In this case, the eigenvalues in \eqref{s-eigenvalues}
  and the constant $\omega$ \eqref{choice-of-omega} are $\mathcal{O}(\xi)$.
  Similarly, when $\xi\rightarrow +\infty$, it is necessary that the constants
  $b, c$ and $\omega$ scale as $\mathcal{O}(\xi^{-1})$ (although the constant $a$ can be $\mathcal{O}(1)$). Note that the scaling of $\omega$ in these two regimes is the same as the scaling of the convergence rates in the previous work;~\cite{hairer-Pavliotis-langevin2008,exponential-rate-schmeister-2012,Iacobucci2019}
  however, in the time-inhomogeneous case the scaling of the last two terms
  (i.e.\ the terms involving $L_1$ and $L_2$) in \eqref{upper-bound-thm} needs to be
  taken into account as well, when analyzing the dependence of the upper bound \eqref{upper-bound-thm} on $\xi$.
  \label{rmk-omega-on-xi}
\end{remark}
\subsection{Connection between time reversal of reverse process and optimally controlled forward process}
\label{subsec-crooks-control-langevin}
In this section we study the connection between an optimally controlled
Langevin process and the time reversal of the reverse Langevin process~\eqref{langevin-eqn-backward}. 
Let us first introduce the stochastic optimal control problem
\begin{align}
   \mathcal{U}(q,p,t) = \inf_{(u_s)_{s\in [t,T]}} \mathbf{E}\left(\mathcal{W}^u_{(t,T)} + \frac{1}{4} \int_t^T
  |u_s|^2\,ds\,\middle|\,q^u(t)=q, p^u(t)=p\right)\,, 
  \label{opt-control-problem-U-langevin}
\end{align}
with $(q,p,t) \in \mathbb{R}^n\times \mathbb{R}^n\times [0, T]$, of the controlled process
\begin{align}
  \begin{split}
    dq^u(s) =& \nabla_p H(q^u(s),p^u(s), s)\,ds \\
    dp^u(s) =& -\nabla_q H(q^u(s),p^u(s), s)\,ds - \gamma(q^u(s),s)
    \nabla_pH(q^u(s),p^u(s), s)\,ds \\
    & + \sigma(q^u(s),s) u_s\,ds + \sqrt{2\beta^{-1}}
    \sigma(q^u(s),s)\,dw(s)\,,
  \end{split}
  \label{langevin-eqn-forward-u}
\end{align}
where $u_s \in \mathbb{R}^m$, $0 \le s \le T$, is the control force, the
infimum is over all processes $(u_s)_{s\in[0,T]}$ 
(which can depend on the past history of $(q^u(r), p^u(r))$ for $0 \le r\le s$ when control $u_s$ is chosen)
such that \eqref{langevin-eqn-forward-u} has a strong solution and 
\begin{align}
  \mathcal{W}^u_{(t,T)} = \int_{t}^{T} \frac{\partial H}{\partial
  s}\big(q^u(s), p^u(s),s\big)\,ds\,, \quad t \in [0,T]\,.
  \label{work-w-langevin-u}
\end{align}

Assume that the value function $\mathcal{U}$
\eqref{opt-control-problem-U-langevin} is a classical solution to the
Hamilton-Jacobi-Bellman equation \eqref{hjb-eqn-langevin} in
Appendix~\ref{app-subsec-control-jarzynski-langevin} (see Remark~\ref{rmk-positivity-existence-of-u-langevin} below). Then, it is known that the optimal control $u^*$ of \eqref{opt-control-problem-U-langevin}--\eqref{langevin-eqn-forward-u} exists (such that the infimum in \eqref{opt-control-problem-U-langevin} is achieved), and is given by
\begin{align}
  u_s^* = -2\sigma^\top(q,s) \nabla_p\,\mathcal{U}(q,p,s)\,,\quad s\in
  [0,T]\,,
  \label{us-opt-change-of-measure-langevin}
\end{align}
when the state of the system is at $(q,p)\in \mathbb{R}^n\times \mathbb{R}^n$ at time $s$. 
We also introduce the probability measure $\pi_0^*$ on $\mathbb{R}^n\times
\mathbb{R}^n$, which is defined as
\begin{align}
  d\pi_0^*(q,p) = \frac{1}{\mathcal{Z}(T)} \,
  \mathbf{E} \left(\mathrm{e}^{-\beta \mathcal{W}}\,\middle|\, q(0)=q, p(0)=p\right) \mathrm{e}^{-\beta H(q,p,0)}\,dqdp \,,
  \label{opt-mu0-langevin}
\end{align}
where $\mathcal{Z}(T)$ is defined in \eqref{mu-s-langevin} and (cf.\ \eqref{work-w-langevin-u})
\begin{align}
  \mathcal{W} = \int_{0}^{T} \frac{\partial H}{\partial s}\big(q(s), p(s),s\big)\,ds\,, 
  \label{work-w-langevin}
\end{align}
 and $\mathbf{E}(\cdot\,|\,q(0)=q,p(0)=p)$ denotes the path ensemble average of
\eqref{langevin-eqn-forward} starting from $(q,p)$ at $s=0$. (Note that
Jarzynski's equality \eqref{jarzynski-langevin} in
Appendix~\ref{app-subsec-control-jarzynski-langevin} implies that the integration of $\pi_0^*$ over $\mathbb{R}^n\times \mathbb{R}^n$ is indeed one.) 
It turns out that the optimal control $u^*$ and the probability measure $\pi_0^*$ provide the
optimal importance sampling Monte Carlo estimators (for free energy calculations) based on the Jarzynski's equality for Langevin
dynamics. We refer to Appendix~\ref{app-subsec-control-jarzynski-langevin} for
further motivations
  of \eqref{opt-control-problem-U-langevin}--\eqref{langevin-eqn-forward-u}. 

Our main result is stated below, which relates the optimally controlled
Langevin process \eqref{langevin-eqn-forward-u} with $u=u^*$ in
\eqref{us-opt-change-of-measure-langevin} to the time reversal of the
reverse Langevin process~\eqref{langevin-eqn-backward}. 
\begin{thm}
  Let $(q^R(s), p^R(s))_{s\in[0,T]}$ be the reverse process \eqref{langevin-eqn-backward}
  starting from the initial distribution $\pi_T^\infty$ \eqref{mu-s-langevin} at $s=0$.
Assume that the probability distribution of $(q^R(s), p^R(s))\in
  \mathbb{R}^n\times \mathbb{R}^n$ has a positive
$C^\infty$-smooth density with respect to Lebesgue measure for all $s \in [0,T]$.
  Define $(q^{R,-}(s), p^{R,-}(s))=(q^R(T-s), p^R(T-s))$ for $s \in [0,T]$.
  Denote by $(q^{u^*}(s), p^{u^*}(s))_{s\in [0,T]}$ the controlled process
  \eqref{langevin-eqn-forward-u} under $u^*$ \eqref{us-opt-change-of-measure-langevin} starting from
  the distribution $\pi_0^*$ \eqref{opt-mu0-langevin}.
  Then, $(q^{R,-}(s), p^{R,-}(s))_{s\in [0,T]}$ and $(q^{u^*}(s),
  p^{u^*}(s))_{s\in [0,T]}$ have the
  same law on the path space $C([0,T], \mathbb{R}^n\times \mathbb{R}^n)$.
  \label{thm-connection-langevin}
\end{thm}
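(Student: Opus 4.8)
The plan is to follow the proof of Theorem~\ref{thm-connection-brownian} line by line, showing that both $(q^{R,-}(s),p^{R,-}(s))_{s\in[0,T]}$ and $(q^{u^*}(s),p^{u^*}(s))_{s\in[0,T]}$ solve one and the same stochastic differential equation with the same initial law, so that equality of their laws on $C([0,T],\mathbb{R}^n\times\mathbb{R}^n)$ follows from uniqueness. Let $\varrho^R(\cdot,\cdot,s)$ be the density of $(q^R(s),p^R(s))$, which solves the Fokker--Planck equation associated with the reverse dynamics \eqref{langevin-eqn-backward} (the analogue of \eqref{fokker-planck-langevin}). The noise in \eqref{langevin-eqn-backward} acts only on the momenta, so the diffusion matrix of the full state is block-diagonal with vanishing position block and momentum block $\gamma$. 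By the time-reversal result of Ref.~\onlinecite{haussmann1986}, extended to this hypoelliptic, time-inhomogeneous setting and justified — exactly as in the proof of Theorem~\ref{thm-connection-brownian} — by the assumed $C^\infty$-smoothness and positivity of $\varrho^R$, the reversal $(q^{R,-}(s),p^{R,-}(s))=(q^R(T-s),p^R(T-s))$ is again a diffusion. Because the position block of the diffusion matrix is zero, the reversal formula leaves the position equation as a pure transport term $dq^{R,-}(s)=\nabla_p H(q^{R,-}(s),p^{R,-}(s),s)\,ds$, while the momentum drift acquires the correction $\frac{2}{\beta\varrho^{R,-}}\nabla_p\cdot(\gamma\,\varrho^{R,-})$, where $\varrho^{R,-}(\cdot,\cdot,s):=\varrho^R(\cdot,\cdot,T-s)$. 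Since $\gamma$ does not depend on $p$, this correction equals $\frac{2}{\beta}\gamma\,\nabla_p\ln\varrho^{R,-}$, and after the usual bookkeeping of time shifts (cf.\ \eqref{b-in-reversal}--\eqref{reversal-of-reverse-2}) one obtains that the momentum drift of $(q^{R,-},p^{R,-})$ at time $s$ is $-\nabla_q H+\gamma\nabla_p H+\frac{2}{\beta}\gamma\nabla_p\ln\varrho^{R,-}$, with noise coefficient $\sqrt{2\beta^{-1}}\sigma(\cdot,s)$.

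Next I would turn to the optimally controlled process: substituting $u^*_s=-2\sigma^\top\nabla_p\mathcal{U}$ from \eqref{us-opt-change-of-measure-langevin} into \eqref{langevin-eqn-forward-u} and using $\gamma=\sigma\sigma^\top$ gives $dq^{u^*}=\nabla_p H\,ds$ and $dp^{u^*}=(-\nabla_q H-\gamma\nabla_p H-2\gamma\nabla_p\mathcal{U})\,ds+\sqrt{2\beta^{-1}}\sigma\,dw$. To match the two drifts I would introduce, in analogy with \eqref{def-g}, the function $g(q,p,s)=\mathcal{Z}(T)\,\mathrm{e}^{\beta H(q,p,s)}\varrho^R(q,p,T-s)$. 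Since $(q^R,p^R)$ starts from $\pi_T^\infty$, we have $\varrho^R(\cdot,\cdot,0)=\mathcal{Z}(T)^{-1}\mathrm{e}^{-\beta H(\cdot,\cdot,T)}$ and hence $g(\cdot,\cdot,T)\equiv 1$; using the Fokker--Planck equation for $\varrho^R$ together with the identity $\mathcal{Q}^*_s=\mathcal{Q}^R_{T-s}$ of Lemma~\ref{lemma-generator-property-langevin}, one derives as in \eqref{pde-for-g} the equation $\frac{\partial g}{\partial s}=-\mathcal{Q}_s g+\beta\frac{\partial H}{\partial s}g$. This is the logarithmic (Hopf--Cole) transform of the Hamilton--Jacobi--Bellman equation \eqref{hjb-eqn-langevin}, so that $\mathcal{U}=-\beta^{-1}\ln g$ and therefore $\nabla_p\mathcal{U}=-\nabla_p H-\beta^{-1}\nabla_p\ln\varrho^{R,-}$; substituting this into the drift of $p^{u^*}$ produces precisely $-\nabla_q H+\gamma\nabla_p H+\frac{2}{\beta}\gamma\nabla_p\ln\varrho^{R,-}$. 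Hence $(q^{R,-},p^{R,-})$ and $(q^{u^*},p^{u^*})$ satisfy the same SDE.

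It remains to identify the initial distributions. Applying the Feynman--Kac formula to the equation for $g$ yields $g(q,p,s)=\mathbf{E}[\exp(-\beta\int_s^T\frac{\partial H}{\partial s}(q(t),p(t),t)\,dt)\,|\,q(s)=q,\,p(s)=p]$, where the expectation is over the uncontrolled forward dynamics \eqref{langevin-eqn-forward}; in particular this gives the representation \eqref{varrho-represented-by-feynmann-kac-langevin} for $\varrho^R$. Combining $\varrho^R(q,p,T)=\mathcal{Z}(T)^{-1}\mathrm{e}^{-\beta H(q,p,0)}g(q,p,0)$ with \eqref{work-w-langevin}, one gets $\varrho^R(q,p,T)=\mathcal{Z}(T)^{-1}\mathbf{E}(\mathrm{e}^{-\beta\mathcal{W}}\,|\,q(0)=q,\,p(0)=p)\,\mathrm{e}^{-\beta H(q,p,0)}$, which is exactly the density of $\pi_0^*$ in \eqref{opt-mu0-langevin}. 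Thus the law of $(q^{R,-}(0),p^{R,-}(0))=(q^R(T),p^R(T))$ equals $\pi_0^*$, the initial law of $(q^{u^*},p^{u^*})$, and the proof concludes as in the Brownian case.

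The main obstacle is the first step: justifying the time-reversal formula in the present degenerate, hypoelliptic and time-inhomogeneous setting, since the statement in Ref.~\onlinecite{haussmann1986} is classically given for uniformly elliptic diffusions. One must check that the assumed smoothness and strict positivity of $\varrho^R$ — whose plausibility is discussed in Remark~\ref{rmk-positivity-langevin-gaussian-case} and Example~\ref{example-gaussian-langevin} — suffice for the reversed process to be a well-posed diffusion with the stated drift, and, the point specific to Langevin dynamics, that the position component (a pure transport term with vanishing diffusion block) is correctly left uncorrected, since the term $\frac{1}{\varrho}\nabla\cdot(a\varrho)$ vanishes in the block where $a$ is zero. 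A secondary technical point is to confirm, as asserted in Remark~\ref{rmk-positivity-existence-of-u-langevin}, that the HJB equation \eqref{hjb-eqn-langevin} admits the claimed classical solution, so that the identification $\mathcal{U}=-\beta^{-1}\ln g$ and the Feynman--Kac representation are rigorous; everything else is a routine adaptation of the Brownian argument.
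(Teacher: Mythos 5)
Your proposal is correct and follows essentially the same route as the paper's proof: invoke the Haussmann--Pardoux time-reversal formula (with the position block of the diffusion matrix vanishing, so only the momentum drift is corrected by $\frac{2}{\beta}\gamma\nabla_p\ln\varrho^{R,-}$), identify $g=\mathcal{Z}(T)\mathrm{e}^{\beta H}\varrho^R(\cdot,\cdot,T-\cdot)$ with $\mathrm{e}^{-\beta\mathcal{U}}$ via the Fokker--Planck/HJB correspondence, and match the initial law $\pi^R_T=\pi_0^*$ through the Feynman--Kac representation. The technical caveats you flag (validity of the reversal formula in the hypoelliptic time-inhomogeneous setting, classical solvability of the HJB equation) are exactly the ones the paper handles by assumption and by reference to Remarks~\ref{rmk-positivity-langevin-gaussian-case} and~\ref{rmk-positivity-existence-of-u-langevin}.
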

\begin{proof}
  We sketch the proof since it is similar to the proof of Theorem~\ref{thm-connection-brownian}. 

  Let $\pi^R_s$ be the probability distribution of the state $(q^R(s), p^R(s))$ at $s \in [0,T]$.
  Denote by $\varrho^R(q,p,s)$ the probability density of $\pi^R_s$ with respect to Lebesgue measure.
  Similar to \eqref{fokker-planck-langevin}, $\varrho^R$ satisfies the Fokker-Planck equation
  \begin{equation}
    \frac{\partial \varrho^R}{\partial s} = \mathrm{e}^{-\beta
    H(\cdot,\cdot,T-s)} (\mathcal{Q}^R_s)^*
    \big(\mathrm{e}^{\beta H(\cdot,\cdot,T-s)} \varrho^R\big) \,,\quad s \in [0,T]\,,
  \label{fokker-planck-langevin-reverse}
  \end{equation}
  where $(\mathcal{Q}^R_s)^*$ is the adjoint generator of $\mathcal{Q}^R_s$ \eqref{l-s-generator-langevin-backward} with respect to the probability measure $\pi_{T-s}^\infty$ given in \eqref{mu-s-langevin}. 
  Recall the reverse process \eqref{langevin-eqn-backward}, which we rewrite as
\begin{align}
  \begin{split}
    dq^R(s) =& -\nabla_p \widehat{H}(q^R(s),p^R(s), s)\,ds \\
    dp^R(s) =& \nabla_q \widehat{H}(q^R(s),p^R(s), s)\,ds -\widehat{\gamma}(q^R(s),s)
    \nabla_p \widehat{H}(q^R(s), p^R(s), s)\,ds \\
    & + \sqrt{2\beta^{-1}} \widehat{\sigma}(q^R(s),s)\,dw(s)\,,
  \end{split}
  \label{langevin-eqn-reverse-repeat}
\end{align}
  where we have defined, for all $(q,p,s) \in \mathbb{R}^n\times \mathbb{R}^n\times [0,T]$,
  \begin{equation}
    \begin{aligned}
    & \widehat{H}(q,p,s) = H(q,p,T-s), \\
    & \widehat{\sigma}(q,s) =
      \sigma(q,T-s),\quad
      \widehat{\gamma}(q,s)=(\widehat{\sigma}\widehat{\sigma}^\top)(q,s) = \gamma(q,T-s)\,.
    \end{aligned}
    \label{h-sigma-gamma-hat}
  \end{equation}
  Since $\varrho^R$ is both $C^\infty$-smooth and positive, the result
  in Ref.~\onlinecite{haussmann1986} asserts that the time reversal
  $(q^{R,-}(s),p^{R,-}(s))_{s\in [0,T]} = (q^R(T-s),p^R(T-s))_{s\in [0,T]}$ is again a
  diffusion process which satisfies the SDE 
\begin{align}
  \begin{split}
    dq^{R,-}(s) =& \nabla_p \widehat{H}(q^{R,-}(s),p^{R,-}(s), T-s)\,ds \\
    dp^{R,-}(s) =& b^{-}(q^{R,-}(s), p^{R,-}(s),T-s)\,ds +
    \sqrt{2\beta^{-1}}
    \widehat{\sigma}\big(q^{R,-}(s),T-s\big)\,dw(s)
  \end{split}
  \label{langevin-eqn-reverse-reversal-1}
\end{align}
for $s \in [0,T]$, with the initial distribution $\pi^R_T$, where the drift is, for all $(q,p,s) \in \mathbb{R}^n\times \mathbb{R}^n\times [0,T]$,
\begin{align}
    b^{-}(q,p,s) =& \Big(-\nabla_q \widehat{H} + \widehat{\gamma}\nabla_p \widehat{H} +
    \frac{2}{\beta \varrho^R}
    \nabla_p\cdot(\varrho^R\,\widehat{\gamma})\Big)(q,p, s)\,. 
  \label{b-minus-langevin}
\end{align}
  Substituting \eqref{h-sigma-gamma-hat} in \eqref{b-minus-langevin}, we find
\begin{align}
  \begin{split}
    b^{-}(q,p,T-s) =& \Big(-\nabla_q \widehat{H} + \widehat{\gamma}\nabla_p \widehat{H} +
    \frac{2}{\beta \varrho^R}
    \nabla_p\cdot(\varrho^R\,\widehat{\gamma})\Big)(q,p, T-s) \\
=& \Big(-\nabla_q H + \gamma\nabla_p H + \frac{2}{\beta \varrho^{R,-}} \nabla_p\cdot(\varrho^{R,-}\,\gamma)\Big)(q,p, s) \\
=& \Big(-\nabla_q H + \gamma\nabla_p H +
    \frac{2}{\beta \varrho^{R,-}} \gamma\nabla_p\,\varrho^{R,-}\Big)(q,p, s) \\
    =& \Big(-\nabla_q H - \gamma\nabla_p H + \frac{2}{\beta}
    \gamma\nabla_p\ln(\mathrm{e}^{\beta H}\varrho^{R,-})\Big)(q,p, s) \,,
  \end{split}
  \label{b-minus-langevin-1}
\end{align}
where we have used the notation $\varrho^{R,-}(q,p,s)= \varrho^{R}(q,p,T-s)$
and the fact that $\gamma$ is independent of $p$.
Using \eqref{h-sigma-gamma-hat} and \eqref{b-minus-langevin-1},
we can rewrite \eqref{langevin-eqn-reverse-reversal-1} more explicitly as 
\begin{align}
  \begin{split}
    dq^{R,-}(s) =& \nabla_p H(q^{R,-}(s),p^{R,-}(s), s)\,ds \\
    dp^{R,-}(s) =& -\nabla_q H(q^{R,-}(s),p^{R,-}(s), s)\,ds - \gamma(q^{R,-}(s),s\big)\nabla_p
    H(q^{R,-}(s),p^{R,-}(s), s)\,ds \\
    & + \frac{2}{\beta}
    \Big(\gamma\nabla_p\ln(\mathrm{e}^{\beta H}\varrho^{R,-})\Big)(q^{R,-}(s),p^{R,-}(s), s)\,ds \\
    & + \sqrt{2\beta^{-1}} \sigma\big(q^{R,-}(s),s\big)\,dw(s)\,.
  \end{split}
  \label{langevin-eqn-reverse-reversal-2}
\end{align}
Define 
\begin{equation}
g(q,p,s) =\mathrm{e}^{\beta H}(q,p,s) \varrho^{R}(q,p,T-s) \mathcal{Z}(T)\,,
  \label{def-g-by-varrho-langevin}
\end{equation} 
where $\mathcal{Z}(T)$ is defined in \eqref{mu-s-langevin}.
Similar to the proof of Theorem~\ref{thm-connection-brownian} (see
\eqref{def-g} and \eqref{pde-for-g}), one can again show that 
$g$ and the value function $\mathcal{U}$ \eqref{opt-control-problem-U-langevin} are related by
\begin{equation}
\mathcal{U} = -\beta^{-1}\ln g\,.
  \label{log-trans-u-g-langevin}
\end{equation}
We refer to \eqref{g-pde-langevin}--\eqref{hjb-eqn-langevin} in Appendix~\ref{app-subsec-control-jarzynski-langevin} for details.
Combining \eqref{langevin-eqn-forward-u},
\eqref{us-opt-change-of-measure-langevin},
\eqref{langevin-eqn-reverse-reversal-2} and \eqref{log-trans-u-g-langevin}, 
we see that both the processes $(q^{R,-}(s), p^{R,-}(s))_{s\in [0,T]}$ and
$(q^{u^*}(s), p^{u^*}(s))_{s\in [0,T]}$ satisfy the same SDE.

Using Feynman-Kac formula, the PDE satisfied by $g$ (see \eqref{fun-g-langevin}--\eqref{g-pde-langevin} in
Appendix~\ref{app-subsec-control-jarzynski-langevin}), 
as well as $\mathcal{W}$ in \eqref{work-w-langevin}, we can compute the
density, for all $(q,p,s) \in \mathbb{R}^n\times \mathbb{R}^n\times [0,T]$,
\begin{equation}
  \varrho^{R}(q,p,s) 
  = \frac{\mathrm{e}^{-\beta H}(q,p,T-s)}{\mathcal{Z}(T)} 
  \mathbf{E} \left( \mathrm{e}^{-\beta \int_{T-s}^{T} \frac{\partial
  H}{\partial s}(q(t), p(t),t)\,dt\,}\,\middle|\, q(T-s)=q, p(T-s)=p \right) \,,
  \label{varrho-represented-by-feynmann-kac-langevin}
\end{equation}
from which we conclude that the distribution $\pi^R_T$ is the same as the distribution $\pi_0^*$ \eqref{opt-mu0-langevin}. 
To summarize, we have shown that both $(q^{R,-}(s), p^{R,-}(s))_{s\in [0,T]}$ and $(q^{u^*}(s), p^{u^*}(s))_{s\in [0,T]}$
satisfy the same SDE with the same initial distribution $\pi_0^*$.
Therefore, they obey the same law on the path space.
\end{proof}

\begin{remark}
  We refer to Remark~\ref{rmk-positivity-langevin-gaussian-case} in
  Section~\ref{subsec-forward-backward-langevin} for the smoothness of the
  density $\varrho^{R}$, while the positivity of $\varrho^{R}$ follows directly
  from~\eqref{varrho-represented-by-feynmann-kac-langevin}.
  Based on the smoothness and the positivity of $\varrho^{R}$, one obtains 
  from \eqref{def-g-by-varrho-langevin} and \eqref{log-trans-u-g-langevin}
  that the value function $\mathcal{U}$ \eqref{opt-control-problem-U-langevin}
  is a classical solution to the Hamilton-Jacobi-Bellman equation~\eqref{hjb-eqn-langevin} in Appendix~\ref{app-subsec-control-jarzynski-langevin}. 
  \label{rmk-positivity-existence-of-u-langevin}
\end{remark}

\section*{Acknowledgements}
The author thanks Carsten Hartmann for various discussions on optimal control
and change of measures in the context of diffusion processes. The author also
benefited from fruitful discussions with Gabriel Stoltz on hypocoercivity of Langevin dynamics. This work is funded by the Deutsche Forschungsgemeinschaft (DFG, German
Research Foundation) under Germany's Excellence Strategy --- The Berlin
Mathematics Research Center MATH+ (EXC-2046/1, project ID: 390685689). 

\section*{Data Availability}
Data sharing is not applicable to this article as no new data were created or analyzed in this
study.
\appendix
\section{Optimal control problems related to Jarzynski's identity}
\label{app-sec-control-jarzynski}
In this section, we discuss briefly the relevance of both the optimal control problem
\eqref{opt-control-problem-U}--\eqref{dynamics-1-u} in Section~\ref{subsec-crooks-control} and
the optimal control problem \eqref{opt-control-problem-U-langevin}--\eqref{langevin-eqn-forward-u} 
in Section~\ref{subsec-crooks-control-langevin} to free energy calculations based on Jarzynski's identity. We refer to Ref.~\onlinecite{non-equilibrium-2018} for detailed calculations.
\subsection{Brownian dynamics}
\label{app-subsec-control-jarzynski-brownian}
We consider the forward process \eqref{dynamics-1-q-vector} and recall the notation in Section~\ref{subsec-notations-brownian}.
Jarzynski's identity states that~\cite{jarzynski1997-master,jarzynski1997}
\begin{align}
\Delta F =  F(T) - F(0) = -\beta^{-1} \ln \mathbf{E} \left(\mathrm{e}^{-\beta
  W}\,\middle|\,x(0)\sim \nu^\infty_0\right)\,,
  \label{jarzynski}
\end{align}
where $F$ is the free energy \eqref{free-energy-f},
$\mathbf{E}(\cdot\,|\,x(0)\sim \nu^\infty_0)$ denotes the path ensemble average of 
\eqref{dynamics-1-q-vector} with initial distribution $\nu^\infty_0$, and $W$ is the work \eqref{work-w}.
Identity \eqref{jarzynski} provides a way of computing the free energy difference
$\Delta F$ by sampling nonequilibrium trajectories. See Refs.~\onlinecite{bias-error-2003,jarzynski-rare2006,optimal-estimator-minh-2009} for
previous studies. In the following, we recall some analysis of
\eqref{jarzynski} in Ref.~\onlinecite{non-equilibrium-2018} using change of measures, and show
how the optimal control problem \eqref{opt-control-problem-U}--\eqref{dynamics-1-u} arises.

Let $\bar{\nu}_0$ be a probability measure on $\mathbb{R}^n$ that is absolutely
continuous with respect to the Lebesgue measure, and $u_s\in \mathbb{R}^m$, $0
\le s \le T$, is a feedback control force such that the
Novikov's condition is satisfied (see Ref.~\onlinecite[Chapter 8, Section 6]{oksendalSDE}). Applying change of measures to \eqref{jarzynski}, we see that the free energy difference can also be estimated using
\begin{align}
  \Delta F = -\beta^{-1}\ln \mathbf{E} \left(\mathrm{e}^{-\beta W} \frac{d\mathbf{P}}{d
  \mathbf{P}^u_{\bar{\nu}_0}}\,\middle|\,x^u(0)\sim \bar{\nu}_0 \right)
  \label{jarzynski-ip-u}
\end{align}
where $\mathbf{E}(\cdot\,|\,x^u(0)\sim \bar{\nu}_0)$ denotes the path ensemble average of the controlled nonequilibrium process
\begin{align}
  \begin{split}
  d x^u(s)  =&  \Big(J - \gamma \nabla V + \frac{1}{\beta} \nabla \cdot \gamma\Big)(x^u(s), s)\,ds + \sqrt{2\beta^{-1}}
  \sigma(x^u(s),s)\,dw(s)\\
  & + \sigma(x^u(s), s)\, u_s\, ds \,,
  \end{split}
\label{dynamics-1-u-in-appendix}
\end{align}
starting from $x^{u}(0)\sim \bar{\nu}_0$, $\mathbf{P}$ and
$\mathbf{P}^u_{\bar{\nu}_0}$ are the path measures of the original process 
\eqref{dynamics-1-q-vector} and the controlled process \eqref{dynamics-1-u-in-appendix}, respectively.
The explicit expression of the likelihood ratio in \eqref{jarzynski-ip-u} is
given by Girsanov's theorem (see Ref.~\onlinecite[Chapter 8, Section 6]{oksendalSDE}).
Moreover, there exists an optimal change of measures $\mathbf{P}^{u^*}_{\nu^*_0}$, which corresponds to an optimal
initial distribution $\nu^*_0$ and an optimal control force $u^*$, such that the variance of 
the (importance sampling) Monte Carlo estimator based on \eqref{jarzynski-ip-u} equals
zero.~\cite{non-equilibrium-2018} In fact, a simple argument shows that 
$\nu^*_0$ satisfies
\begin{align}
  \frac{d\nu_0^*}{dx} 
  = \frac{\mathrm{e}^{-\beta V(x,0)}}{Z(T)} 
  \mathbf{E} \left( \mathrm{e}^{-\beta W}\,\middle|\,x(0)=x\right) = \frac{\mathrm{e}^{-\beta V(x,0)}}{Z(T)} g(x,0)  \,,
  \label{opt-mu0-in-appendix}
\end{align}
where 
\begin{align}
  g(x,s) = \mathbf{E} \left( \mathrm{e}^{-\beta \int_{s}^{T} \frac{\partial
  V}{\partial s}(x(t), t)\,dt}\,\middle|\,x(s)=x\right) \,, \quad
  \forall~ (x,s) \in \mathbb{R}^n\times [0,T]\,.
  \label{fun-g}
\end{align}
Feynman-Kac formula implies that $g$ solves the PDE
\begin{align}
  &\frac{\partial g}{\partial s} + \mathcal{L}_s g -\beta \frac{\partial V}{\partial s} g =
    0\,,\quad \mbox{and}\quad g(\cdot, T) \equiv 1\,.
    \label{g-pde}
\end{align}

The optimal control problem \eqref{opt-control-problem-U}--\eqref{dynamics-1-u}
is recovered by considering the logarithmic transformation $U=-\beta^{-1}\ln
g$ (see Ref.~\onlinecite[Example III.8.2; Chapter VI]{fleming2006}).
In fact, using the identity \eqref{lg-llogg} in Lemma~\ref{lemma-generator-property},
we can deduce from \eqref{g-pde} that $U:\mathbb{R}^n\times [0,T]\rightarrow \mathbb{R}$ satisfies the
Hamilton-Jacobi-Bellman equation 
\begin{align}
  \begin{split}
    &\frac{\partial U}{\partial s} + \min_{v\in
    \mathbb{R}^{m}}\Big\{\mathcal{L}_s U + (\sigma v)\cdot \nabla U + \frac{|v|^2}{4} + \frac{\partial V}{\partial s}\Big\} =
    0\,,\quad (x,s) \in \mathbb{R}^n \times [0,T]\,,\\
  & U(\cdot, T) = 0\,.
  \end{split}
  \label{hjb-eqn}
\end{align}
Therefore, $U$ is the value function of the stochastic optimal control problem
\eqref{opt-control-problem-U}--\eqref{dynamics-1-u} (Ref.~\onlinecite[Chapter III]{fleming2006}).
It is also known that the optimal control of \eqref{opt-control-problem-U}--\eqref{dynamics-1-u} is given by
\eqref{us-opt-change-of-measure}, which coincides with $u^*$ that leads to the optimal change of
measures \eqref{jarzynski-ip-u}. 
\subsection{Langevin dynamics}
\label{app-subsec-control-jarzynski-langevin}

Similar to the case of Brownian dynamics in Appendix~\ref{app-subsec-control-jarzynski-brownian}, Jarzynski's identity 
\begin{align}
  \Delta \mathcal{F}=  \mathcal{F}(T) - \mathcal{F}(0) = -\beta^{-1} \ln
  \mathbf{E} \left(\mathrm{e}^{-\beta \mathcal{W}}\,\middle|\,\left(q(0), p(0)\right)\sim \pi^\infty_0\right)
  \label{jarzynski-langevin}
\end{align}
holds for Langevin dynamics,~\cite{Jarzynskia2008} where $\mathcal{F}$ is the free energy \eqref{free-energy-f-langevin}, 
$\mathbf{E}(\cdot\,|\,\left(q(0), p(0)\right)\sim \pi^\infty_0)$ denotes the path ensemble average of the (forward) process \eqref{langevin-eqn-forward} starting from the initial distribution $\pi^\infty_0$ in \eqref{mu-s-langevin}, and $\mathcal{W}$ is the work \eqref{work-w-langevin}.
In the following, we explain the relation between the optimal control problem 
\eqref{opt-control-problem-U-langevin}--\eqref{langevin-eqn-forward-u} and \eqref{jarzynski-langevin}.

Applying change of measures to \eqref{jarzynski-langevin}, we have
\begin{align}
  \Delta \mathcal{F} = -\beta^{-1}\ln \mathbf{E}
  \left(\mathrm{e}^{-\beta \mathcal{W}} \frac{d\mathbf{P}}{d
  \mathbf{P}^u_{\bar{\pi}_0}}\,\middle|\,
  \left(q^u(0), p^u(0)\right)\sim \bar{\pi}_0 \right)\,,
  \label{jarzynski-ip-u-langevin}
\end{align}
where $\mathbf{P}^u_{\bar{\pi}_0}$ and $\mathbf{E}(\cdot\,|\,\left(q^u(0), p^u(0)\right)\sim \bar{\pi}_0)$ denote 
respectively the probability measure (in path space) and the path ensemble average of the controlled Langevin process
\begin{align}
  \begin{split}
    dq^u(s) =& \nabla_p H(q^u(s),p^u(s), s)\,ds \\
    dp^u(s) =& -\nabla_q H(q^u(s),p^u(s), s)\,ds - \gamma(q^u(s),s)
    \nabla_pH(q^u(s),p^u(s), s)\,ds \\
    & + \sigma(q^u(s),s) u_s\,ds + \sqrt{2\beta^{-1}} \sigma(q^u(s),s)\,dw(s)
  \end{split}
  \label{langevin-eqn-forward-u-appendix}
\end{align}
starting from the initial distribution $\bar{\pi}_0$ (which may differ from
$\pi^\infty_0$), $\mathbf{P}$ denotes the probability measure of \eqref{langevin-eqn-forward} starting from
$\pi^\infty_0$, and $u_s\in \mathbb{R}^m$, $0 \le s \le T$, is the control force.
Note that in \eqref{langevin-eqn-forward-u-appendix} the control force is only
applied to the equation of momenta~$p$. The explicit expression of the likelihood ratio in
\eqref{jarzynski-ip-u-langevin} is again given by Girsanov's theorem.
In particular, under mild conditions (such that the Hamilton-Jacobi-Bellman
equation \eqref{hjb-eqn-langevin} below has classical solution; see
Remark~\ref{rmk-positivity-existence-of-u-langevin}), there is an optimal change of measure, characterized by the optimal
initial distribution $\pi^*_0$ and the optimal control force $u^*$, such that the variance of 
the importance sampling Monte Carlo estimator based on \eqref{jarzynski-ip-u-langevin} equals
zero. A simple analysis shows that 
\begin{align}
  d\pi_0^* = \frac{1}{\mathcal{Z}(T)} g(q,p,0)\,\mathrm{e}^{-\beta H(q,p,0)}\,dqdp \,,
  \label{opt-mu0-langevin-appendix}
\end{align}
where
\begin{align}
  g(q,p,s) = \mathbf{E} \left( \mathrm{e}^{-\beta
  \int_{s}^{T} \frac{\partial H}{\partial s}(q(t), p(t),t)\,dt\,}\,\middle|\,
  q(s)=q, p(s)=p \right) \,, ~\forall~(q,p,s) \in \mathbb{R}^n\times \mathbb{R}^n \times [0,T]\,.
  \label{fun-g-langevin}
\end{align}
Applying Feynman-Kac formula, we find that the function $g$ in \eqref{fun-g-langevin} satisfies the PDE
\begin{align}
  &\frac{\partial g}{\partial s} + \mathcal{Q}_s g -\beta \frac{\partial H}{\partial s} g =
    0\,,\quad  g(\cdot, \cdot,T) = 1\,,
    \label{g-pde-langevin}
\end{align}
where $\mathcal{Q}_s$ is the generator of \eqref{langevin-eqn-forward} at time $s$.

The optimal control problem \eqref{opt-control-problem-U-langevin}--\eqref{langevin-eqn-forward-u}
 is then recovered by considering $\mathcal{U}=-\beta^{-1}\ln g$. Concretely, applying
\eqref{lg-llogg-langevin} in Lemma~\ref{lemma-generator-property-langevin},
one can derive from \eqref{g-pde-langevin} the Hamilton-Jacobi-Bellman equation
\begin{align}
  \begin{split}
    &\frac{\partial \mathcal{U}}{\partial s} + \min_{v\in
    \mathbb{R}^m}\Big\{\mathcal{Q}_s \mathcal{U} + (\sigma v)\cdot \nabla_p \mathcal{U} + \frac{|v|^2}{4} + \frac{\partial H}{\partial s}\Big\} = 0\,,\\
    & \mathcal{U}(\cdot, \cdot, T) = 0\,.
  \end{split}
  \label{hjb-eqn-langevin}
\end{align}
Therefore, $\mathcal{U}$ is the value function of the optimal control problem 
\eqref{opt-control-problem-U-langevin}--\eqref{langevin-eqn-forward-u}
(Ref.~\onlinecite[Chapter III]{fleming2006}).
The optimal control of
\eqref{opt-control-problem-U-langevin}--\eqref{langevin-eqn-forward-u} is
given by \eqref{us-opt-change-of-measure-langevin}, which coincides with $u^*$
that leads to the optimal change of measure in \eqref{jarzynski-ip-u-langevin}. 

\section{Relative entropy estimate for Langevin dynamics: Proof of Theorem~\ref{thm-entropy-langevin}}
\label{app-sec-entropy-estimate-langevin}
In this section, we prove Theorem~\ref{thm-entropy-langevin} in Section~\ref{subsec-entropy-langevin}.
The proof is based on the hypocoercivity theory (Ref.~\onlinecite[Section 6
and Section 7]{villani2009hypocoercivity}), which is a general framework for the
study of the convergence of degenerate kinetic equations towards equilibrium.
Before presenting the proof, we need to introduce some notation.

Recall that we consider the special case \eqref{special-hamiltonian}, i.e.\
  \begin{equation}
    \sigma=\sqrt{\xi} I_n\,,\quad H(q,p,s) = V(q,s) + \frac{|p|^2}{2}, \quad (q,p,s) \in \mathbb{R}^n\times
  \mathbb{R}^n \times [0, T]\,,
    \label{special-hamiltonian-repeat}
\end{equation}
where $\xi >0$. Let us define the operators
\begin{align}
  \begin{split}
  A =& \nabla_p\,,\quad A_i = \frac{\partial}{\partial p_i}\,,\quad  B_s = p \cdot \nabla_q - \nabla_q V \cdot \nabla_p\,,\\
    C =& \nabla_q\,,\quad C_i = [A_i, B_s] = A_iB_s-B_sA_i = \frac{\partial}{\partial q_i}\,, \quad 1 \le
    i \le n\,, 
  \end{split}
  \label{def-a-b-c}
\end{align}
where $s\in [0,T]$, $[A_i, B_s]$ denotes the commutator of $A_i$ and $B_s$, and we have used the fact that $V(q,s)$ is independent of $p$ to derive the last equality.  
We denote by $A_i^*$ and $B^*_s$ the adjoint operators
of $A_i$ and $B_s$ in $L^2(\mathbb{R}^n\times \mathbb{R}^n, \pi_{s}^{\infty})$ 
(with respect to the weighted inner product defined by $\pi_s^\infty$
\eqref{mu-s-langevin}), respectively (the dependence on $s$ is
omitted in the notation $A_i^*$, since $A_i^*$ turns out to be time-independent; see \eqref{a-b-c-relation}).
One can verify that~(see Ref.~\onlinecite[Section 7]{villani2009hypocoercivity})
\begin{align}
  \begin{split}
    & A_i^* = -\frac{\partial}{\partial p_i} + \beta p_i\,, \quad 
  A_i^*A_i = - \frac{\partial^2}{\partial p_i^2} + \beta p_i \frac{\partial}{\partial p_i}\,,
    \\
    & B_s^* = -B_s\,,\\
    & [C_i, B_s] = - \sum_{l=1}^n\frac{\partial^2 V}{\partial q_i\partial q_l}
  \frac{\partial}{\partial p_l}\,, \quad [A_i, A_j^*] = \beta
  \delta_{ij}\,,\quad [A_i, C_j] = 0\,, \quad [C_i, A^*_j] = 0\,,
  \end{split}
  \label{a-b-c-relation}
\end{align}
for $1 \le i,j \le n$, as well as 
\begin{align}
  \mathcal{Q}_s = - \frac{\xi}{\beta} \sum_{i=1}^n A^*_i A_i + B_s\,, \quad \mathcal{Q}^R_{T-s} =
  \mathcal{Q}_s^* = - \frac{\xi}{\beta} \sum_{i=1}^n A^*_i A_i - B_s\,,
  \label{l-by-a-b}
\end{align}
where $\mathcal{Q}_s, \mathcal{Q}_s^R$ 
are defined in \eqref{l-s-generator-langevin-forward} and
\eqref{l-s-generator-langevin-backward} (in the case of \eqref{special-hamiltonian-repeat}), respectively.
The identities in 
\eqref{a-b-c-relation} and \eqref{l-by-a-b} allow us to interchange the order of
two first-order differential operators (see the proof of Lemma~\ref{lemma-entropy-estimate-langevin-1} below).
To simplify notation, we introduce functions $h,u: \mathbb{R}^n\times
\mathbb{R}^{n}\times[0,T] \rightarrow \mathbb{R}$, given by
\begin{equation}
  h= \frac{d\pi_s}{d\pi^\infty_s}\,, \quad\mbox{and}\quad u=\ln h,
  \label{h-u-in-appendix}
\end{equation}
where the probability measure $\pi_s$ is defined in \eqref{pi-rho-density}.
We denote by $\langle\cdot, \cdot\rangle$ the inner product of two vectors in
$\mathbb{R}^n$. Also, we
will write $\int f d\pi_s$ and $\int f d\pi_s^\infty$ for integrations over the
entire phase space $\mathbb{R}^n\times \mathbb{R}^n$. 
With these conventions, 
\eqref{lg-llogg-langevin} in Lemma~\ref{lemma-generator-property-langevin} and
\eqref{log-pde-relative-density-langevin} in
Lemma~\ref{lemma-equation-of-log-density-nu-to-mu-langevin} imply
\begin{align}
  \frac{\partial h}{\partial s} = 
   \beta\Big(\frac{\partial V}{\partial s} - \frac{d \mathcal{F}}{ds} \Big)h +
   \mathcal{Q}^*_s h\,,
\end{align}
and
\begin{align}
  \frac{\partial u}{\partial s}
  =& 
  \beta \Big(\frac{\partial V}{\partial s} - \frac{d \mathcal{F}}{ds} \Big) +
  \mathcal{Q}^*_s u+ \frac{\xi}{\beta}|Au|^2 \,,
  \label{u-pde}
\end{align}
where we have used $\frac{\partial H}{\partial s} = \frac{\partial V}{\partial
s}$ thanks to \eqref{special-hamiltonian-repeat}. 
For the quantity $\mathcal{E}(s)$ in \eqref{modified-ent}, using \eqref{def-a-b-c} and \eqref{h-u-in-appendix}, we have
\begin{align}
  \mathcal{E}(s) = \mathcal{R}^{\mathrm{Lan}}(s) + a \int  |Au|^2 \,d\pi_s + 2b\int  \langle
  Au, Cu\rangle \,d\pi_s + c\int  |Cu|^2 \,d\pi_s \,.
  \label{modified-ent-appendix}
\end{align}

The following result is essentially a special case of the more
general result Ref.~\onlinecite[Lemma $32$]{villani2009hypocoercivity}. We
present the proof since in the current setting $V$ is time-dependent and the
calculation is more transparent in the special case considered here. 
\begin{lemma}
  Let $h$ and $u$ be the functions in \eqref{h-u-in-appendix}. Then, we have the following identities.
\begin{align*}
  \frac{d}{ds} \int |Au|^2 \,d\pi_s
  =\,& - \frac{2\xi}{\beta}\sum_{i,j=1}^n\int (A_iA_ju)^2 \,d\pi_s
  -  2\xi\int  |Au|^2 \,d\pi_s - 2 \int \langle Au, Cu\rangle \,d\pi_s\,,\\
  \frac{d}{ds} \int |C u|^2 \,d\pi_s
  =\,& 2\beta\int \langle Cu, C\frac{\partial V}{\partial s}\rangle \,d\pi_s
  -\frac{2\xi}{\beta}\sum_{i,j=1}^n\int (C_j A_i u)^2\, d\pi_s \\
  &\, - 2\int \langle [C,B_s] u, C u\rangle \,d\pi_s\,,  \\
    \frac{d}{ds} \int \langle A u, C u\rangle \,d\pi_s
    =\,&   \beta \int \langle Au, C\frac{\partial V}{\partial s}\rangle
    \,d\pi_s - \frac{2\xi}{\beta}\sum_{i,j=1}^n\int (A_j A_i u)(A_i C_j u)
    \,d\pi_s \\
    &\,- \xi\int \langle A u, Cu\rangle \,d\pi_s - \int |Cu|^2 \,d\pi_s -
    \int \langle Au, [C,B_s]u\rangle \,d\pi_s \,.
\end{align*}
  \label{lemma-entropy-estimate-langevin-1}
\end{lemma}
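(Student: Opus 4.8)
The plan is to reduce the three identities to one differentiation formula and then push everything through the commutator calculus recorded in \eqref{a-b-c-relation}--\eqref{l-by-a-b}. The starting point is that, for any (possibly time-dependent) observable $g$ on phase space with enough regularity and decay, the density $\varrho$ of $\pi_s$ with respect to Lebesgue measure solves the forward Kolmogorov equation with generator $\mathcal{Q}_s$ (this is the content of \eqref{fokker-planck-langevin}), so that one integration by parts yields the master formula
\begin{equation*}
  \frac{d}{ds}\int g\, d\pi_s = \int \Big(\frac{\partial g}{\partial s} + \mathcal{Q}_s g\Big)\, d\pi_s\,.
\end{equation*}
I would apply this with $g = |Au|^2$, $g = |Cu|^2$ and $g = \langle Au, Cu\rangle$ in turn, reducing the lemma to an algebraic identity for $\partial_s g + \mathcal{Q}_s g$ under the integral sign.

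For the $\mathcal{Q}_s g$ part I would use the Leibniz (carr\'e du champ) rule $\mathcal{Q}_s(fg) = f\,\mathcal{Q}_s g + g\,\mathcal{Q}_s f + \tfrac{2\xi}{\beta}\langle Af, Ag\rangle$, valid because $B_s$ is first order and, by \eqref{l-by-a-b}, the non-transport part of $\mathcal{Q}_s$ is $-\tfrac{\xi}{\beta}\sum_i A_i^*A_i$ whose carr\'e du champ operator is $(f,g)\mapsto\tfrac{\xi}{\beta}\langle Af, Ag\rangle$; this is exactly where the dissipative squares $-\tfrac{2\xi}{\beta}\sum_{i,j}\int (A_iA_ju)^2\, d\pi_s$, $-\tfrac{2\xi}{\beta}\sum_{i,j}\int (C_jA_iu)^2\, d\pi_s$ and the mixed sum in the third identity originate. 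For the $\partial_s g$ part I would differentiate $u$ through \eqref{u-pde}: since $A_i = \partial_{p_i}$ commutes with $\partial_s$ and annihilates $V$ (which is $p$-independent) as well as $d\mathcal{F}/ds$, one gets $A_i\partial_s u = A_i\mathcal{Q}_s^*u + \tfrac{\xi}{\beta}A_i|Au|^2$ with \emph{no} $\partial_s V$ term, whereas $C_i = \partial_{q_i}$ produces the $\beta\, C_i\tfrac{\partial V}{\partial s}$ contributions seen in the second and third identities (and $d\mathcal{F}/ds$, depending on $s$ only, always drops). It then remains to commute $A_i$, $C_i$ past $\mathcal{Q}_s^* = -\tfrac{\xi}{\beta}\sum_j A_j^*A_j - B_s$ and past $B_s$ using $[A_i,B_s] = C_i$, $[C_i,B_s] = -\sum_l (\partial^2_{q_iq_l}V)A_l$, $[A_i,A_j^*] = \beta\delta_{ij}$ and $[A_i,C_j] = [C_i,A_j^*] = 0$, and to integrate by parts against $\pi_s$ --- writing $\int(\cdot)\,d\pi_s = \int(\cdot)h\,d\pi_s^\infty$, moving $A_j,B_s$ to their $\pi_s^\infty$-adjoints $A_j^*, -B_s$, and using $A_jh = (A_ju)h$ and $C_jh=(C_ju)h$. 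Collecting terms produces the friction terms $-2\xi\int|Au|^2$ and $-\xi\int\langle Au,Cu\rangle$, the coupling terms $-2\int\langle Au,Cu\rangle$ and $-\int|Cu|^2$, and the Hessian terms $-2\int\langle[C,B_s]u,Cu\rangle$ and $-\int\langle Au,[C,B_s]u\rangle$.

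The main obstacle is bookkeeping rather than any conceptual difficulty, and in particular the appearance of cubic-in-$Au$ terms: because we work with $u = \ln h$, \eqref{u-pde} carries the nonlinear term $\tfrac{\xi}{\beta}|Au|^2$, whose derivatives $A_i|Au|^2 = 2\sum_j(A_ju)(A_iA_ju)$ (and likewise $C_i|Au|^2$) feed cubic expressions into the calculation; one has to check that these cancel exactly against the cubic terms generated when $A_i$ (resp.\ $C_i$) is commuted through $\sum_j A_j^*A_j$ and the extra factors $A_jh = (A_ju)h$ are produced by the integration by parts. Keeping the special-case assumptions $\sigma = \sqrt{\xi}\,I_n$ and $H = V + |p|^2/2$ in force throughout is what makes these cancellations transparent, which is the reason for giving the argument here rather than quoting Ref.~\onlinecite[Lemma 32]{villani2009hypocoercivity}. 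Once the three identities are established, they are combined (with a suitable choice of the constants $a,b,c$) with the logarithmic Sobolev inequality for $\nu_s^\infty$ and the Hessian bound $\|\nabla^2 V\|_2\le L$ to control $\tfrac{d}{ds}\mathcal{E}(s)$ in Appendix~\ref{app-sec-entropy-estimate-langevin}.
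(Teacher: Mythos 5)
Your proposal is correct and follows essentially the same route as the paper's proof: the master formula $\frac{d}{ds}\int g\,d\pi_s=\int(\partial_s g+\mathcal{Q}_s g)\,d\pi_s$ obtained from the Fokker--Planck equation, differentiation of $u$ via \eqref{u-pde} (with $A$ annihilating $\partial_s V$ and $C$ producing the $\beta\,C\frac{\partial V}{\partial s}$ terms), the commutator identities \eqref{a-b-c-relation}--\eqref{l-by-a-b}, and integration by parts against $\pi_s^\infty$ using $Ah=(Au)h$, including the cancellation of the cubic terms coming from $\frac{\xi}{\beta}|Au|^2$ that you correctly identify as the only delicate point. Your carr\'e du champ packaging of the $\mathcal{Q}_s g$ contribution is a cosmetic reorganization of the same integrations by parts the paper carries out explicitly for the first identity.
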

\begin{proof}
  We only prove the first identity. The other two identities can be derived similarly, using 
  \eqref{def-a-b-c}, \eqref{a-b-c-relation} and integration by parts.

Recall that $\mathcal{Q}^*_s$ is the adjoint operator of $\mathcal{Q}_s$ in
$L^2(\mathbb{R}^n\times \mathbb{R}^n, \pi_{s}^{\infty})$ where 
  $\pi_{s}^{\infty}$ is given in \eqref{mu-s-langevin}.
  Using $d\pi_s(dqdp) = \varrho(q,p,s)\,dqdp$ (see \eqref{pi-rho-density}), \eqref{h-u-in-appendix}, and the Fokker-Planck equation \eqref{fokker-planck-langevin}, we compute 
\begin{align*}
  &\frac{d}{ds} \int |A u|^2 \,d\pi_s \notag \\
  =&\frac{d}{ds} \int |A u|^2 \varrho\,dqdp \notag \\
  =&  \int \left(\frac{\partial}{\partial s}|A u|^2\right) \,\varrho\, dqdp + \int |A u|^2
\mathrm{e}^{-\beta H}\mathcal{Q}^*_s \big(\mathrm{e}^{\beta H} \varrho\big) \,dqdp\\
  =&  2\int \big\langle Au, A\frac{\partial u}{\partial s}\big\rangle\, d\pi_s
  + \int (\mathcal{Q}_s|A u|^2)\, d\pi_s \\
  =&  2\int \big\langle Au, A\frac{\partial u}{\partial s}\big\rangle\, h\,d\pi^\infty_s
  + \int (\mathcal{Q}_s|A u|^2)\, h\,d\pi^\infty_s\,.
\end{align*}
  Using \eqref{u-pde}, \eqref{l-by-a-b}, and integration by parts formula, we get
  \begin{align*}
  &\frac{d}{ds} \int |A u|^2 \,d\pi_s \notag \\
  =&  2\int \big\langle Au, A\frac{\partial u}{\partial s}\big\rangle\, h\,d\pi^\infty_s
  + \int (\mathcal{Q}_s|A u|^2)\, h\,d\pi^\infty_s \\
  =&  2\int \langle Au, A \mathcal{Q}_s^* u \rangle h\,d\pi^\infty_s +
    \frac{2\xi}{\beta}\int \langle
  Au, A |Au|^2 \rangle h\,d\pi^\infty_s - \frac{\xi}{\beta}\int \langle A|A u|^2,
  Ah\rangle\,d\pi^\infty_s \\
    & + \int (B_s|A u|^2) h\,d\pi^\infty_s\\
    =& \left[-\frac{2\xi}{\beta}\sum_{j=1}^n \int \langle Au, AA^*_jA_j u\rangle h\,
    d\pi^\infty_s +
  \frac{\xi}{\beta}\int \langle Ah, A |Au|^2 \rangle\,d\pi^\infty_s\right] \\
    &+ \left[- 2 \int \langle Au, AB_su\rangle h\, d\pi^\infty_s + \int (B_s|A u|^2) h\,d\pi^\infty_s\right]\\
    =:&~ \mathcal{J}_1 + \mathcal{J}_2\,,
\end{align*}
  where we have used $(Au)h=(A\ln h) h = Ah$, as well as
  $A(\frac{\partial V}{\partial s}-\frac{d\mathcal{F}}{ds}) =\nabla_p\frac{\partial V}{\partial s} = 0$.

  Concerning $\mathcal{J}_1$, using integration by parts formula, 
  the identities $A_iA_j^*=A_j^*A_i+[A_i, A_j^*]=A_j^*A_i + \beta \delta_{ij}$
  (see \eqref{a-b-c-relation}) and $A_iA_j=A_jA_i$, we can derive
\begin{align}
  \mathcal{J}_1 =& -\frac{2\xi}{\beta}
  \sum_{j=1}^n \int \langle Au, AA^*_jA_j u\rangle h\,
    d\pi^\infty_s +
  \frac{\xi}{\beta}\int \langle Ah, A |Au|^2 \rangle\,d\pi^\infty_s\\
  =& -\frac{2\xi}{\beta}\sum_{i,j=1}^n\int (A_iu)(A_iA^*_jA_ju)h\,d\pi^\infty_s
  + \frac{2\xi}{\beta}\sum_{i,j=1}^n\int (A_ih)(A_iA_ju)(A_ju)\,d\pi^\infty_s\notag \\
  =& -\frac{2\xi}{\beta}\sum_{i,j=1}^n\int (A_iu)(A^*_jA_iA_ju)h\,d\pi^\infty_s
  -\frac{2\xi}{\beta}\sum_{i,j=1}^n\int (A_iu)([A_i,A^*_j]A_ju)h\,d\pi^\infty_s\notag \\
  & + \frac{2\xi}{\beta}\sum_{i,j=1}^n\int (A_ih)(A_iA_ju)(A_ju)\,d\pi^\infty_s\notag \\
  =& -\frac{2\xi}{\beta}\sum_{i,j=1}^n\int (A_iA_ju)^2h\,d\pi^\infty_s
  -2\xi\int |Au|^2h\,d\pi^\infty_s \notag \\ 
  =& -\frac{2\xi}{\beta}\sum_{i,j=1}^n\int (A_iA_ju)^2\,d\pi_s
  -2\xi\int |Au|^2\,d\pi_s\,. \label{j-1}
\end{align}

  Concerning $\mathcal{J}_2$, using $AB_s=B_sA+[A,B_s]=B_sA+C$ (see \eqref{def-a-b-c}), we compute
    \begin{equation}
      \begin{aligned}
      \mathcal{J}_2 =& 
      - 2 \int \langle Au, AB_su\rangle h\, d\pi^\infty_s + \int (B_s|A u|^2) h\,d\pi^\infty_s\\
      = &  -  2\int \langle A u, B_sAu\rangle h \,d\pi^\infty_s - 2
      \int \langle Au, [A, B_s] u\rangle h \,d\pi^\infty_s + \int (B_s|A u|^2)
	h\,d\pi^\infty_s \\
      =& - 2 \int \langle Au, Cu\rangle h \,d\pi^\infty_s \\ 
      =& - 2 \int \langle Au, Cu\rangle \,d\pi_s\,.  
    \end{aligned}
\label{j-2}
    \end{equation}
The first conclusion follows by summing up \eqref{j-1} and \eqref{j-2}. 
\end{proof}
Now we are ready to prove Theorem~\ref{thm-entropy-langevin}.
\begin{proof}[Proof of Theorem~\ref{thm-entropy-langevin}]
First, note that in the case of \eqref{special-hamiltonian} the marginal measure of $\pi^\infty_s$ 
  \eqref{mu-s-langevin} in momenta $p$ is $Z_p^{-1}\mathrm{e}^{-\beta |p|^2/2}dp$, 
      where $Z_p=\int_{\mathbb{R}^n} \mathrm{e}^{-\beta |p|^2/2}dp$, which
      satisfies the logarithmic Sobolev inequality with constant $\beta$ (see
      Ref.~\onlinecite[Example 21.3]{villani2008optimal}).
      The assumption that the spatial marginal measure
$\nu^\infty_s$ \eqref{mu-s-langevin-q-margin} of $\pi^\infty_s$ satisfies the
      logarithmic Sobolev inequality with constant $\kappa$ implies that
      $\pi^\infty_s$ itself (as a product measure) satisfies the logarithmic
      Sobolev inequality with constant $\min\{\kappa, \beta\}$ (see
      Ref.~\onlinecite[Section 5.2]{ledoux2001concentration}). Therefore,
      using the notation introduced above, we have
      \begin{equation}
	\mathcal{R}^{\mathrm{Lan}}(s) \le \frac{1}{2\min\{\kappa, \beta\}} \int 
	\left(|Au|^2 + |Cu|^2\right)	\,d\pi_s\,.
	\label{lsi-whole-measure-pi-in-proof}
      \end{equation}
      
      Since $\lambda_i$, $\widetilde{\lambda}_i$, where $i=1,2$, are the
      eigenvalues \eqref{s-eigenvalues} of the matrices $S$ and
      $\widetilde{S}$ \eqref{S-1-2}, respectively, we have 
\begin{align}
  \begin{split}
    0 \le \lambda_1(x^2 + y^2) \le &\, ax^2 + 2bxy+cy^2 \le \lambda_2(x^2 + y^2)\,, \\
    0 \le  \widetilde{\lambda}_1(x^2 + y^2) \le &\,\Big[\xi\Big(\frac{1}{\beta} +
    2a\Big)-2b(1+L)\Big]x^2 - 2(a + b\xi+cL) xy+ (2b-c)y^2 \\
     \le&\, \widetilde{\lambda}_2(x^2 + y^2)\,,
  \end{split}
  \label{s-s1-bounds}
\end{align}
for all $x,y\in \mathbb{R}$. 
  Using \eqref{lsi-whole-measure-pi-in-proof} and the first
  estimate in \eqref{s-s1-bounds}, from the expression
  \eqref{modified-ent-appendix} we find
\begin{align}
  \mathcal{R}^{\mathrm{Lan}}(s) \le \mathcal{E}(s) \le
  \Big(\frac{1}{2\min\{\kappa, \beta\}} +
  \lambda_2\Big) \int  \big(|Au|^2 + |Cu|^2\big)\,d\pi_s \,, \quad \forall s \in [0, T]\,.
  \label{r-e-bound}
\end{align}
  Applying the identities in both Proposition~\ref{prop-production-rate-of-entropy-langevin} and
  Lemma~\ref{lemma-entropy-estimate-langevin-1}, we can derive
\begin{align}
  & \frac{d}{ds} \mathcal{E}(s) \notag \\
    =   &  
     -\beta \int \frac{\partial V}{\partial
     s} d\pi_s^\infty + \beta \int \frac{\partial V}{\partial s}\, d\pi_s \notag\\
    & + 2c\beta \int \langle Cu, C\frac{\partial V}{\partial s}\rangle \,d\pi_s 
    + 2b\beta \int \langle Au, C\frac{\partial V}{\partial s}\rangle \,d\pi_s \notag\\
    & - \xi\Big(\frac{1}{\beta} + 2a\Big)\int  |Au|^2 \,d\pi_s - 2(a+b\xi)\int \langle A u, C u\rangle \,d\pi_s 
     - 2b\int |Cu|^2 \,d\pi_s \notag\\
     & -  2b\int \langle Au, [C,B_s]u\rangle \,d\pi_s - 2c\int \langle [C,B_s] u, C u\rangle \,d\pi_s \notag\\
    & -\frac{2\xi}{\beta}\sum_{i,j=1}^n
    \int \Big[a (A_iA_ju)^2 + 2b (C_j A_i u) (A_iA_ju) + c (C_j A_i
    u)^2\Big] \,d\pi_s\notag \\
    =:& \mathcal{J}_1 + \mathcal{J}_2 + \mathcal{J}_3 + \mathcal{J}_4 + \mathcal{J}_5\,, \label{exp-of-e}
\end{align}
where $\mathcal{J}_l$ denotes the terms on the $l$th line in the second
  equality above, for $1 \le l \le 5$. 
  Clearly, the first estimate in \eqref{s-s1-bounds} implies that $\mathcal{J}_5 \le 0$. In the following, we estimate $\mathcal{J}_l$ for $l=1,2,3,4$.

Recall the constants $L_1$, $L_2$ and $L$ in the assumption \eqref{langevin-assump-1}.
Similar to \eqref{term1-bounded-by-sqrt-of-r}, using
\eqref{integration-bounded-by-tv}, \eqref{langevin-assump-1}, and Csisz\'ar-Kullback-Pinsker inequality \eqref{csiszar-kullback-pinsker}, we find 
  \begin{align}
    \begin{split}
    \mathcal{J}_1 \le& \beta \left| \int \frac{\partial V}{\partial s} d\pi^{\infty}_s - 
    \int \frac{\partial V}{\partial s}\, d\pi_s\right| \\
    \le& \beta L_1
    \sqrt{2D_{KL}(\pi_s\,\|\,\pi^\infty_s)} =
    \beta L_1\sqrt{2\mathcal{R}^{\mathrm{Lan}}(s)} \le \frac{\beta^2L_1^2}{2\omega} + \omega
    \mathcal{E}(s)\,, 
      \end{split}
    \label{app-bound-1}
  \end{align}
where $\omega$ is given in \eqref{choice-of-omega} and the last inequality follows from Young's inequality, while Cauchy-Schwarz inequality and the assumption \eqref{langevin-assump-1} imply
  \begin{align}
    \begin{split}
      \mathcal{J}_2 \le & \left|2c\beta \int \langle Cu, C\frac{\partial V}{\partial s}\rangle \,d\pi_s 
    + 2b\beta \int \langle Au, C\frac{\partial V}{\partial s}\rangle
      \,d\pi_s\right| \\
    \le &
      \left(c+\frac{b}{2} \right)\beta^2 L_2^2 + c\int |Cu|^2\,d\pi_s + 2 b \int |Au|^2\,d\pi_s\,.
    \end{split}
    \label{app-bound-2}
  \end{align}
  For $\mathcal{J}_3$, it is clear that
  \begin{equation}
    \mathcal{J}_3 \le -\xi\left(\frac{1}{\beta} + 2a\right) 
 \int  |Au|^2 \,d\pi_s + 2(a+b\xi)\int  |A u|\,|C u| \,d\pi_s 
     - 2b\int |Cu|^2 \,d\pi_s \,.
    \label{app-bound-3}
  \end{equation}
  For $\mathcal{J}_4$, since $[C_i, B_s]= -\sum_{l=1}^n\frac{\partial^2 V}{\partial q_i\partial q_l}
  \frac{\partial}{\partial p_l}$ (see \eqref{a-b-c-relation}) and $\|\nabla^2 V\|_2 \le L$ by assumption \eqref{langevin-assump-1}, we have 
  \begin{equation}
    \begin{aligned}
      |\mathcal{J}_4| =\, & \left|-2b\int \langle Au, [C,B_s]u\rangle \,d\pi_s
      - 2c\int \langle [C,B_s] u, C
     u\rangle \,d\pi_s\right| \\
       \le & 2bL \int |Au|^2 d\pi_s + 2cL \int |Au|\,|Cu| d\pi_s\,.
    \end{aligned}
    \label{app-bound-4}
  \end{equation}

Substituting \eqref{app-bound-1}--\eqref{app-bound-4} into \eqref{exp-of-e}, 
 applying \eqref{s-s1-bounds} and \eqref{r-e-bound}, we find
\begin{align*}
  & \frac{d}{ds} \mathcal{E}(s) \\
    \le &
  \left[\frac{\beta^2L_1^2}{2\omega} + \left(c+ \frac{b}{2}\right)\beta^2L_2^2\right] + \omega
  \mathcal{E}(s) -\bigg\{\left[\xi\Big(\frac{1}{\beta} + 2a\Big)-2b(1+L)\right] \int |Au|^2 \,d\pi_s\\
    & -  2(a + b\xi+cL)\int |Au| |Cu| \,d\pi_s  
     + (2b-c)\int |Cu|^2 \,d\pi_s\bigg\} \notag\\
    \le& \left[\frac{\beta^2 L_1^2}{2\omega} + \left(c+ \frac{b}{2}\right)\beta^2L_2^2\right] + \omega \mathcal{E}(s)
    - \widetilde{\lambda}_1 \int  (|Au|^2+|Cu|^2) \,d\pi_s \notag\\
    \le& \left[\frac{\beta^2L_1^2}{2\omega} + \left(c+
    \frac{b}{2}\right)\beta^2L_2^2\right] 
    + \omega \mathcal{E}(s) - \widetilde{\lambda}_1
    \Big(\frac{1}{2\min\{\kappa, \beta\}} + \lambda_2\Big)^{-1} \mathcal{E}(s)\\
    =& \left[\frac{\beta^2L_1^2}{2\omega} + \left(c+ \frac{b}{2}\right)\beta^2L_2^2\right]
    - \omega \mathcal{E}(s) \,,
\end{align*}
where the final equality follows from the choice of $\omega$ in \eqref{choice-of-omega}.
The conclusion follows by integrating the inequality above (Gronwall's inequality).
\end{proof}
\bibliographystyle{abbrv}
\bibliography{reference}

\end{document}